\setlist[enumerate]{leftmargin=*,noitemsep, topsep=0pt}
\setlist[itemize]{leftmargin=*,noitemsep, topsep=0pt}
\renewcommand{\pmod}[1]{\ \, \left( \mathrm{mod} \, #1 \right)}
\newcommand{\Pmod}[1]{\ \, ( \mathrm{mod} \, #1 )}
\newcommand{\GG}{\mathcal{G}}
\newcommand{\N}{\mathbb{N}}
\newcommand{\g}{\gamma}
\newcommand{\ve}{\varepsilon}
\newcommand{\coef}[1]{\operatorname{coeff}_{\left[#1\right]}}
\renewcommand{\pmod}[1]{\ \left( \mathrm{mod} \, #1 \right)}
\renewcommand{\Pmod}[1]{\ ( \mathrm{mod} \, #1 )}
\newcommand{\CA}{\mathcal{A}}
\newcommand{\CI}{\mathcal{I}}
\newcommand{\IR}{{\mathbb R}}
\newcommand{\IC}{{\mathbb C}}
\newcommand{\IZ}{{\mathbb Z}}
\newcommand{\IN}{{\mathbb N}}
\theoremstyle{plain}
\newtheorem{thm}{Theorem}[section]
\newtheorem{cor}[thm]{Corollary}
\newtheorem{lem}[thm]{Lemma}
\newtheorem{prop}[thm]{Proposition}
\newtheorem*{rem}{Remark}
\theoremstyle{definition}
\numberwithin{table}{section}
\newcommand{\mat}[1]{\left( \begin{matrix} #1 \end{matrix} \right)}
\def\lp{\left(}
\def\rp{\right)}
\def\lsp{(}
\def\rsp{)}
\def\lb{\left[}
\def\rb{\right]}
\def\lsb{[}
\def\rsb{]}
\newcommand{\andd}{\quad \mbox{ and } \quad}
\newcommand{\where}{\quad \mbox{ where }}
\setlist[itemize]{noitemsep, topsep=0pt}
\setlist[enumerate]{noitemsep, topsep=0pt}
\def\a{\alpha}
\def\b{\beta}
\def\k{\bm{k}}
\def\l{\lambda}
\def\w{\omega}
\def\th{\theta}
\def\ve{\varepsilon}
\def\s{\sigma}
\def\g{\gamma}
\def\GG{\Gamma}
\def\DD{\Delta}
\def\LL{\Lambda}
\newcommand{\vast}{\bBigg@{3}}
\newcommand{\Vast}{\bBigg@{5}}
\newcommand{\Ssm}{{\scriptscriptstyle \Sigma}}
\title{Statistics for Random Representations of Lie algebras}
\author{Walter Bridges}
\address{University of North Texas, Department of Mathematics, Denton, TX, USA}
\email{Walter.Bridges@unt.edu}
\author{Kathrin Bringmann}
\address{University of Cologne, Department of Mathematics and Computer Science, Cologne, Germany}
\email{kbringma@uni-koeln.de}
\author{Caner Nazaroglu}
\address{University of Cologne, Department of Mathematics and Computer Science, Cologne, Germany}
\email{cnazarog@uni-koeln.de}
\subjclass[2020]{60C05, 17B10, 11P81}
\keywords{Boltzmann distribution, equivalence of ensembles, Random representations of Lie algebras}
\begin{document}

\begin{abstract}
In this paper we investigate how a typical, large-dimensional representation looks for a complex Lie algebra. In particular, we study the family $\mathfrak{sl}_{r+1}(\mathbb{C})$ of Lie algebras for $r \geq 2$ and derive asymptotic probability distributions for the multiplicity of small irreducible representations, as well as the largest dimension, the largest height, and the total number of irreducible representations appearing in the decomposition of a representation sampled uniformly from all representations with the same dimension.
This provides a natural generalization to the similar statistical studies of integer partitions, which forms the case $r=1$ of our considerations and where one has
a rich toolkit ranging from combinatorial methods to approaches utilizing the theory of modular forms. We perform our analysis by extending the statistical mechanics inspired approaches in the case of partitions to the infinite family here.
\end{abstract}
\maketitle

\section{Introduction and statement of results}
In this article, we study the structure of a typical finite-dimensional representation of the Lie algebra $\mathfrak{sl}_{r+1}(\mathbb{C})$ with $r \geq 2$ if the dimension of the (not necessarily irreducible) representation is large.
More specifically, we investigate the typical features of an $n$-dimensional representation $\rho$ that is randomly sampled
from all $n$-dimensional representations
using the uniform measure
\begin{equation*}
P_n(\rho):=\frac{1}{p_r(n)}
\end{equation*}
with $p_{r}(n)$ denoting the number of $n$-dimensional representations of $\mathfrak{sl}_{r+1}(\mathbb{C})$. By Weyl's Theorem, any such representation $\rho$ decomposes as a direct sum of irreducible representations.
In turn, the finite-dimensional, irreducible representations are uniquely identified through dominant integral weights. More precisely, any $\k \in \IN^r$ parametrizes a dominant integral weight
\begin{equation*}
(k_1 - 1) \bm{\l}_1+ \ldots + (k_r - 1) \bm{\l}_r,
\end{equation*}
where $\bm{\l}_1, \ldots, \bm{\l}_r$ is a set of fundamental weights, and it identifies the (up to isomorphism unique) irreducible representation $\rho_{\k}$, which has this dominant integral weight as its highest weight (see e.g.~\cite{FH} for relevant background).
So we have
$$
\rho=\bigoplus_{\k \in \N^r} \rho_{\k}^{X_{\k}(\rho)},
$$
where each $X_{\k}(\rho)\in \N_0$ counts the multiplicity of $\rho_{\k}$ in $\rho$.
Note that the dimension of $\rho_{\k}$ is\footnote{Since $r$ is fixed in most of the paper, we often write $a(\k)=a_r(\k)$.}
\begin{equation}\label{E:akDef}
\dim (\rho_{\bm{k}})= \frac{1}{c_r} \prod_{1 \leq \ell \leq j\leq r}\left(k_\ell+\dots + k_j \right)=:a_r(\bm{k}),
\where
c_r:=r!(r-1)! \cdots 1! ,
\end{equation}
which is a homogenous polynomial in $\bm{k}$.
So if  $\dim(\rho)=n$, then the multiplicities of irreducible representations are related by
\begin{equation}\label{E:dimensionconstrain}
\dim(\rho)=\sum_{\k \in \N^r} a(\k)X_{\k}(\rho)=n
\end{equation}
and the generating function for $p_r (n)$ can be expressed as an infinite product
\begin{equation*}
\sum_{n \geq 0} p_{r}(n)q^n=\prod_{\bm{k}\in \mathbb{N}^r}\frac{1}{1-q^{a(\bm{k})}}.
\end{equation*}
This product is in fact the key to the recent work of Romik \cite{Romik}, who also pursued a probabilistic perspective and proved an asymptotic formula for $p_2 (n)$, the number of $n$-dimensional representations of $\mathfrak{sl}_{3}(n)$, undertaking a deep study of the {\it Witten zeta function} $\sum_{\bm n \in \N^2} a_2(\bm n)^{-s}$ in the process.\footnote{As a general reference for zeta functions of root systems, we refer the reader to \cite{KMT}.}  Work by Brindle, Franke, and two of the authors then extended Romik's asymptotic to a general family of enumeration functions generated by infinite products \cite{BBBF} (see also \cite{BF}).

In this paper, we view the $X_{\k}$ as random variables on the set of all finite-dimensional representations, and our goal is to understand the $n \to \infty$ asymptotic distributions
of various statistics built out of the $X_{\k}$ if the representation is sampled with $P_n$ (so that the $X_{\k}$ are subject to the constraint \eqref{E:dimensionconstrain}).
One can ask, for example, among the irreducible representations occurring in the decomposition of an $n$-dimensional representation, what is the largest dimension, or what is the maximum height (see Subsection \ref{S:Heightdistribution}), or how many irreducible representations occur, as $n \to \infty$.  All of these statistics may be simply described in terms of the multiplicities $X_{\k}$, and we develop a general method to derive distributions for such statistics.

If $r=1$, then we have $a_1(k)=k$, and a random $n$-dimensional representation of $\mathfrak{sl}_2(\mathbb{C})$ corresponds to an integer partition of $n$, in which the part $k$ occurs with multiplicity $X_k$.  Statistics for partitions of $n$ under the uniform measure were well-studied over the previous century by Erd\H{o}s, Lehner, Szalay, Tur\'{a}n, and many others \cite{EL,ErdosTuran,SzalayTuran1,SzalayTuran2,SzalayTuran3,Temperley}.  The work of Fristedt \cite{Fristedt} and its further developments by Pittel \cite{Pittel} then marked a significant advance in the toolkit for studying such statistics.
In Table \ref{Table:partnstats}, we provide a summary of such distributions derived in \cite{DemboVershikZeitouni,EL,Fristedt},
where we show the name of each statistic, its description in terms of the multiplicities $X_{k}$, the normalization taken to obtain a non-trivial distribution, and a description of the distribution along with the reference proving this distribution.
Here, we define
\begin{equation}\label{E:limitshapedef_r1}
A:=\frac{\sqrt{6}}{\pi}
\andd
f_1 (t) := - \log\left(1-e^{-t}\right) .
\end{equation}
Moreover, in the column ``distribution'', cdf means the cummulative distribution function 
and the notation $g(t) {\scriptstyle \overset{P}{\to}} f(t)$ means convergence in probability that is uniform in $t \geq \eta$ for any $\eta >0$.

\begin{table}[h!]
\centering
\vspace{-5pt}
\begin{tabular}{c | c | c | c | c}
name  & \makecell{description in \\ terms of $X_k$  } & normalization & distribution & Theorem  \\
\specialrule{.1em}{.0em}{.0em}
\makecell{multiplicity of \\ a small part}
&
\makecell{$X_{k_n}$ with \\ $k_n=o\left(\sqrt{n}\right)$}
&
$\frac{k_n}{A\sqrt{n}}X_{k_n}$
&
cdf: $1-e^{-x}$
&
\cite[Thm. 2.1]{Fristedt}
\\
\hline
\makecell{joint multiplicities \\ of small parts}
&
\makecell{$(X_{k})_{k \leq k_n}$ with \\ $k_n=o\!\left(n^\frac14\right)$}
&
$\left(\frac{k}{A\sqrt{n}}X_{k}\right)_{k \leq k_n}$
&
cdf: $\displaystyle\prod_{k \leq k_n} \lp 1-e^{-x_{k}} \rp$
&
\cite[Thm. 2.2]{Fristedt}
\\
\hline\rule{0pt}{4ex}
largest part
&
$Y_1:=\displaystyle \max_{X_{k}>0} (k)$
&
$\frac{Y_1-A\sqrt{n}\log\left(A\sqrt{n}\right)}{A\sqrt{n}}$
&
cdf: $e^{-e^{-x}}$
&
\cite[Thm. 1.1]{EL}
\\
\hline\rule{0pt}{4ex}
shape
&
$\varphi(t):=\sum_{k \geq t} X_{k}$
&
$A\sqrt{n}\, \varphi\!\left(\frac{1}{A\sqrt{n}}t\right)$
&
$A\sqrt{n} \,\varphi\!\left(\frac{t}{A\sqrt{n}}\right)\!\overset{P}{\to} f_1(t)$
&
\cite[Thm. 1]{DemboVershikZeitouni}
\\[.5em]
\hline\rule{0pt}{4ex}
\makecell{total number \\ of parts}
&
$N:=\sum_{k\geq 1} X_{k} $
&
$\frac{N-A\sqrt{n}\log\left(A\sqrt{n}\right)}{A\sqrt{n}}$
&
\makecell{cdf: $e^{-e^{-x}}$}
&
\cite[Thm. 1.1]{EL}
\\
\bottomrule
\end{tabular}
\caption{Distributions for partitions}
\label{Table:partnstats}
\end{table}

To describe Fristedt's methodology in more detail, the striking point here is a statistical mechanics-type approach to the problem, where one views the set of partitions as the state space of a physical system with the size of a partition corresponding to its energy. From this point of view, the uniform measure on partitions of a given size corresponds to a microcanonical ensemble on the said physical system. A phenomenon well-known to physicists as {\it equivalence of ensembles} then states that in the thermodynamic limit the microcanonical ensemble coincides with the so-called canonical ensemble, where temperature instead of energy is held fixed. For partitions, this means allowing the size of a partition to vary with a Boltzmann distribution that identifies partition sizes with energy. Crucially, the multiplicities $X_k$ become independent under the Boltzmann model, which makes it much simpler to work with than the unwieldy uniform measure. Fristedt then proved the equivalence of ensembles in this setup for a wide range of statistics, which allows identifying distributions deduced from the Boltzmann model with those obtained from the uniform measure under the limit $n \to \infty$.
This technique\footnote{This is also referred to as a conditioning device. A similar method is poissonization (see \cite{SL}).} has since become ubiquitous in analytic combinatorics more broadly \cite{ABT1,ABT2,DFLS}.

We adopt the same approach here by viewing finite-dimensional representations of $\mathfrak{sl}_{r+1}(\mathbb{C})$ as physical states (the dimension of a representation giving its energy) and introduce the corresponding Boltzmann model, which is a probability measure $Q_q$ on all finite-dimensional representations, defined as follows. For $q \in (0,1)$ and a finite-dimensional $\mathfrak{sl}_{r+1}(\mathbb{C})$-representation $\rho$, define
\begin{equation} \label{D:Boltzmannmodel}
    Q_q(\rho):=q^{\dim(\rho)}\prod_{\k \in \N^r}\left(1-q^{a(\k)}\right).
\end{equation}
\noindent To relate $Q_q$ to $P_n$, we first choose $q=q_n$ to maximize the probability, $Q_q(\dim = n)$, that we sample a representation of dimension $n$.  In Proposition \ref{P:SaddlePointAsympGrowth}, we show that
\begin{equation}\label{eq:equation_of_state}
q_n=e^{-s_n^{\frac{r(r+1)}{2}}}, \where  s_n \asymp n^{-\frac{2}{r(r+3)}}.
\end{equation}
Then Proposition \ref{P:EquivalenceofEnsembles}, which we refer to as \textit{equivalence of ensembles}, roughly states that if $q=q_n$, many limiting distributions under $Q_{q_n}$ and $P_n$ coincide. More precisely, we prove equivalence of ensembles for statistics that depend only on $X_{\k}$ with $\k$ very small or $\k$ very large in an appropriately defined sense (see Corollary \ref{C:EquivalenceofEnsembles}).
We then utilize this equivalence to solve our questions through a Boltzmann model and then transfer the results to our original questions holding the dimension fixed as in \eqref{E:dimensionconstrain}.
For some statistics, however, such as the shape or the total number of irreducible representations, we must deal with $X_{\k}$ involving $\k$ in the middle range not covered by this result.
In this case we employ what we call the \textit{rare events lemma} (Lemma \ref{lem:ExpSmallPrinciple}), which states that all events with ``small enough'' probability under the Boltzmann model have zero limiting probability under the uniform measure. So as long as the nontrivial contributions of $X_{\k}$ with $\k$ in the aforementioned middle range constitute such rare events, one can remove such contributions and again use the equivalence of ensembles stated above. Compared to the case of partitions,
a technical challenge in the case of $\mathfrak{sl}_{r+1}(\mathbb{C})$-representations arises from the need to estimate sparse multi-sums and products.
For example, we use Weyl differencing in an inductive fashion to handle such estimations and prove Proposition \ref{P:EquivalenceofEnsembles}.

\begin{rem}
The parameter $T := - {\scriptsize \tfrac{1}{\log (q)}}$ is the temperature of the given Boltzmann model. So the relation \eqref{eq:equation_of_state}
ensuring the equivalence of ensembles
corresponds to the physical equation of state ${\scriptsize E\asymp T^{\frac{r+3}{r+1}}}$
between the temperature $T$ and energy $E := n$.
\end{rem}

Our results are summarized in Table \ref{Table:slrstats}, where we list the 
theorem number containing the precise statement.
In the column ``distribution'', mgf means the moment generating function (i.e., $E(e^{uX})$ for a random variable $X$)
and the notation $g(\bm{t}) {\scriptstyle \overset{P}{\to}} f(\bm{t})$ denotes convergence in probability that is uniform on any set of the form $[\eta, \infty)^r$ for $\eta >0$.
Here, for $\bm{t}\in (0,\infty)^r$, we 
extend equation \eqref{E:limitshapedef_r1} and
define\footnote{Throughout we denote $\bm{dy}:=dy_1\cdots dy_r$.}
\begin{equation}\label{E:limitshapedef}
f_r(\bm{t}):=\int_{\prod_{j=1}^r[t_j,\infty)} \frac{e^{-a(\bm{y})}}{1-e^{-a(\bm{y})}}\bm{dy} .
\end{equation} 
The sequence $s_n$ is as in Proposition \ref{P:SaddlePointAsympGrowth} and the constants in the column ``normalization'' are
\begin{align*}
    a_n^{[D]} &:=s_n^{-\frac{r(r+1)}{2}}\left(\log\left(s_n^{-r}\right)-\frac{r-1}{r+1}\log\left(\log\left(s_n^{-r}\right)\right) +\log\left(\frac{2C_r}{r+1}\right)\right), \quad b_n^{[D]} := s_n^{-\frac{r(r+1)}{2}}, \\
    a_n^{[H]} &:=\frac{r!}{2}s_n^{-\frac{r+1}{2}}\a_n^{-\frac{r-1}{r}}\left(\frac{\a_n}{(r-1)!}
-\frac{r-1}{r} \log (\a_n) \right), \quad b_n^{[H]} :=\frac{r!}{2}s_n^{-\frac{r+1}{2}}\a_n^{-\frac{r-1}{r}},
\end{align*}
where 
\begin{equation}\label{eq:introduction_C_r_alpha_n_definitions}
C_r := \int_{\substack{\bm{y} \in \IR_{> 0}^r \\ a(\bm{y}) \leq 1}} \bm{dy}
\andd
\a_n := \GG (r+1)  \log \!\lp 2 \GG (r) s_n^{-\frac{r+1}{2}} \rp.
\end{equation}
Finally, the height of the representation is defined as the largest value of height among its weights (measured through the inner product with the Weyl vector). To display this in Table \ref{Table:slrstats} we define the vector
$\bm{1} := (1,1,\ldots,1)$ and
\begin{equation}\label{eq:height_l_function_defn}
L(\bm{k}):=\frac{1}{2} \sum_{1 \leq \ell \leq j \leq r} (k_\ell+k_{\ell+1}+\ldots+k_j)
=
\frac{1}{2} \sum_{j=1}^r j (r+1-j) k_j .
\end{equation}

\begin{table}[h]
\centering
\vspace{-5pt}
\begin{tabular}{c | c | c | c | c}
name  & \makecell{description in \\ terms of $X_{\k}$  } & normalization & distribution & Theorem  \\
\specialrule{.1em}{.0em}{.0em}
\makecell{multiplicity of \\ a small irreducible \\ representation }
&
\makecell{$X_{\k_n}$ with \\ $a(s_n\k_n)=o(1)$}
&
$a(s_n\k_n)X_{\k_n}$
&
cdf: $1-e^{-x}$
&
\ref{T:Mult} (1)
\\
\hline
\makecell{joint multiplicities \\ of small irreducible \\ representations }
&
\makecell{$(X_{\k})_{\k \in I_n}$ with \\ $a(s_n\k)=o\!\left(s_n^{\frac{r(r+1)}{r+3}}\right)$ \\ for $\k \in I_n$}
&
$\lp a(s_n\k)X_{\k} \rp_{\k \in I_n}$
&
cdf: $\displaystyle\prod_{\k \in I_n} \lp 1-e^{-x_{\k}} \rp$
&
\ref{T:Mult} (2)
\\
\hline
\makecell{largest dimension \\ among irreducible \\ representations }
&
$D:=\displaystyle\max_{X_{\k}>0} \!\lp a(\k) \rp$
&
$\frac{D-a_n^{[D]}}{b_n^{[D]}}$
&
cdf: $e^{-e^{-x}}$
&
\ref{T:lgstajk}
\\
\hline
height
&
$H:= \displaystyle\max_{X_{\k}>0} \!\lp L(\k-\bm{1}) \rp$
&
$\frac{H-a_n^{[H]}}{b_n^{[H]}}$
&
cdf: $e^{-e^{-x}}$
&
\ref{T:Height}
\\
\hline
shape
&
$\varphi(\bm{t}):=\sum_{\substack{\k \in \N^r \\ k_j \geq t_j}} X_{\k} $
&
$s_n^{r}\,\varphi(s_n^{-1}\bm{t})$
&
$s_n^{r}\,\varphi(s_n^{-1}\bm{t})\overset{P}{\to} f_r(\bm{t})$
&
\ref{T:Shape}
\\
\hline
\makecell{total number of \\ irreducible \\ representations}
&
$N:=\sum_{\k \in \N^r} X_{\k} $
&
$s_n^{\frac{r(r+1)}{2}} N$
&
\makecell{mgf: $\!\!\displaystyle\prod_{\k \in \N^r} \frac{1}{1-ua(\k)^{-1}} $ }  &
\ref{T:NumberofParts}
\\
\bottomrule
\end{tabular}
\caption{Main results}
\label{Table:slrstats}
\vspace{-5pt}
\end{table}

\begin{rem}
Integer partitions can be represented visually by their Young/Ferrer's diagrams, where each part corresponds to a row of cells.  A new partition is obtained by taking the columns as parts.  This involution swaps the largest part with the number of parts, which is why these rows in Table 1.1 have the same distribution.  In our case, $\mathfrak{sl}_{r+1}(\mathbb{C})$-representations for $r\geq 2$ lack such an involution, so we obtain the three distinct distributions in Table \ref{Table:slrstats} for the height, largest dimension, and total number of irreducible representations, which reduce to either one of these quantities for $r=1$.
\end{rem}

We also prove the following auxiliary asymptotic for the partial sums of the number of irreducible representations of a given dimension defined as
\begin{equation*}
R_r(x):=\sum_{m \leq x} \varrho_r(m)
\where
\varrho_r(m):=\#\{\k \in \IN^r : a(\k)=m\}.
\end{equation*}
\begin{prop}\label{prop:irrep_partial_sum_asymptotics}
For $r\in\N$ and $x>0$ we have
\begin{equation*}
R_r(x) = C_r x^{\frac{2}{r+1}} + O_r \!\lp x^{\frac{2 (r-1)}{r^2}} \rp.
\end{equation*}
\end{prop}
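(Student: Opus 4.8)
Write $\mathcal R_x:=\{\bm y\in\IR_{> 0}^r:a(\bm y)\le x\}$, so $R_r(x)=\#(\mathcal R_x\cap\IN^r)$, and put $V_r(x):=\operatorname{vol}(\mathcal R_x)$. Since $a$ is homogeneous of degree $\tfrac{r(r+1)}2$, we have $\mathcal R_x=x^{\frac2{r(r+1)}}\mathcal R_1$, hence $V_r(x)=C_r x^{\frac2{r+1}}$ by the definition of $C_r$ in \eqref{eq:introduction_C_r_alpha_n_definitions}. Thus the statement reduces to the lattice discrepancy bound $R_r(x)=V_r(x)+O_r\!\lp x^{\frac{2(r-1)}{r^2}}\rp$. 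I would prove this by induction on the number of variables, peeling off one coordinate at a time. Writing $\bm k=(\bm k',k_r)$ with $\bm k'=(k_1,\dots,k_{r-1})$, one has $R_r(x)=\sum_{k_r\ge1}\#\{\bm k'\in\IN^{r-1}:a(\bm k',k_r)\le x\}$, and the inner count is again a lattice count for a region cut out by a product of positive linear forms (the factors $k_\ell+\dots+k_j$ of $a$, now with $k_r$ entering as a constant shift). Because every such linear form has nonnegative coefficients, $a$ is strictly increasing in each variable, so for fixed values of the remaining coordinates the admissible values of any one coordinate form an initial segment $\{1,\dots,m\}$; replacing the innermost such count by the length of the corresponding real interval costs $O(1)$ per slice. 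Iterating this down to the base case $r=1$ (where $R_1(x)=\floor{x}=x+O(1)$, matching $C_1=1$ and error exponent $0$) turns $R_r(x)$ into $\sum_{k_r\ge1}\operatorname{vol}\{\bm y'\in\IR_{> 0}^{r-1}:a(\bm y',k_r)\le x\}$ plus an accumulated error, and comparing $\sum_{k_r}\operatorname{vol}(\cdot)$ with $\int_0^\infty\operatorname{vol}\{\bm y':a(\bm y',t)\le x\}\,dt=V_r(x)$ produces $R_r(x)=V_r(x)+(\text{accumulated error})$.

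The real content is bounding that error, and the obstruction — absent for ordinary partitions — is that $a$ vanishes on the coordinate hyperplanes, so $\mathcal R_x$ and all the auxiliary regions $\{a(\bm y',k_r)\le x\}$ are \emph{unbounded}, each possessing thin ``horns'' running to infinity along coordinate subspaces. Hence the naive heuristic ``discrepancy $\le$ number of slices'' is false, and one must estimate the reach of these horns and the count-versus-volume inside them. The quantitative inputs I would establish are: (i) sharp upper and lower bounds for $a$ in terms of well-chosen sub-products of its linear factors, showing for instance that along a coordinate axis $a(\bm y)\asymp y_1^{\,r}$, so the horns of $\mathcal R_x$ reach only distance $\asymp x^{1/r}$, and more generally bounding the extent of $\{a(\bm y',k_r)\le x\}$ in terms of $x/k_r^{\,r-1}$; (ii) the implied bound $\operatorname{vol}(\mathcal R_x\cap\{y_r<1\})=O\!\lp x^{\frac2{r+2}}\rp$ on the horn volumes, which is of lower order than $x^{\frac{2(r-1)}{r^2}}$ for $r\ge3$; and (iii) bounds on the ``short-slice'' tails of the inner one-dimensional counts. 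These are sparse multi-sum and multi-product estimates, and — as the authors flag — it is natural to run them through a Weyl-differencing-type induction on $r$. Feeding (i)--(iii) into the recursion, the error telescopes; the dominant contribution comes from the innermost one-dimensional count summed against the diameter profile of the region — e.g.\ for $r=3$ this is $\sum_{k_3\le x^{1/3}}(x/k_3^2)^{1/3}\asymp x^{1/3}\cdot x^{1/9}=x^{4/9}=x^{\frac{2(r-1)}{r^2}}$ — and in general the exponents combine to the claimed $\tfrac{2(r-1)}{r^2}$.

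Combining this with the reduction of the first paragraph gives $R_r(x)=V_r(x)+O_r\!\lp x^{\frac{2(r-1)}{r^2}}\rp=C_r x^{\frac2{r+1}}+O_r\!\lp x^{\frac{2(r-1)}{r^2}}\rp$. The main obstacle is exactly step (i)--(iii): making the horn estimates uniform and sharp enough that the errors accumulated over the $r$-fold peeling add up to the stated exponent rather than to something larger, since the non-compactness of $\mathcal R_x$ forces a careful, case-by-case treatment according to proximity to each coordinate face.
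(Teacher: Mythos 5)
Your reduction to a volume comparison via homogeneity is correct and matches the paper's starting point. However, the paper does not use the coordinate-peeling / slice-counting recursion you propose. Instead, the discrepancy bound is obtained in a single step: since $h_r$ is nondecreasing in each variable, one has the sandwich
\[
\int_{\substack{\bm{y}\in\IR_{\geq1}^r\\ h_r(\bm{y})\leq x}}\bm{dy}
\;\leq\;
\sum_{\substack{\bm{k}\in\IN^r\\ h_r(\bm{k})\leq x}}1
\;\leq\;
\int_{\substack{\bm{y}\in\IR_{>0}^r\\ h_r(\bm{y})\leq x}}\bm{dy},
\]
so the entire error is the volume of the shell where some coordinate lies in $(0,1)$ — precisely the union of your ``horns''. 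After rescaling by homogeneity this shell volume is $x^{\frac{2}{r+1}}\sum_{a}I_{r,a,\varepsilon(x)}$ with $\varepsilon(x)=x^{-\frac{2}{r(r+1)}}$, and the paper bounds $I_{r,a,\varepsilon}\ll\varepsilon^{1/r}$ (Lemma~\ref{lem:irrep_polynomial_integral}) via the elementary algebraic inequality $h_r(\bm{y})^{r-1}\geq y_a^{r(r-1)/2}h_{r-1}(\bm{y}_{\widehat a})^r$ of Lemma~\ref{lem:irrep_polynomial_bound}. No peeling, no summing of per-slice errors, no Weyl differencing.

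There is a genuine gap in the route you sketch. You correctly note that ``discrepancy $\leq$ number of slices'' cannot be invoked naively because the region is unbounded; but the bound that this naive argument would produce — the count of admissible $(r-1)$-tuples after dropping one coordinate, which is $\asymp x^{2/r}$ — strictly exceeds the target $O(x^{2(r-1)/r^2})$, since $\tfrac{2}{r}>\tfrac{2(r-1)}{r^2}$ for all $r$. Rescuing the peeling argument requires exhibiting cancellation or uniformity in the horns that improves on the slice count; that is exactly what your steps (i)--(iii) are supposed to supply, but they are not carried out. The heuristic $r=3$ sum $\sum_{k_3\leq x^{1/3}}(x/k_3^2)^{1/3}$ ranges over only one of the $r-1$ slice parameters and does not obviously represent the total accumulated error of the recursion, so it does not substantiate the claim that the exponents combine to $\tfrac{2(r-1)}{r^2}$. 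The paper's sandwich sidesteps all of this: the only thing to estimate is the shell volume, and Lemma~\ref{lem:irrep_polynomial_bound} handles it in one shot. (Separately, Weyl differencing in this paper is used for the minor-arc bounds of Section~\ref{S:MinorArcs}, not for this lattice-point count; and your horn-volume estimate (ii) $O(x^{2/(r+2)})$, while plausibly sharper, is neither needed nor what the paper establishes — the paper's Lemma~\ref{lem:irrep_polynomial_integral} gives the weaker $O(x^{2(r-1)/r^2})$, which is already the entire error.)
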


The paper is organized as follows. In Section \ref{S:PartialSumIrreps} we prove Proposition \ref{prop:irrep_partial_sum_asymptotics}. In Section \ref{S:MinorArcs} and the accompanying Appendix \ref{A:LowerBounds}, we give some elementary results on how the dimensions of irreducible representations $a (\bm{k})$ distribute themselves over circles of varying size using Weyl differencing.
These are then used as minor arc bounds in Section \ref{S:Boltzmann}, where we introduce
the Boltzmann model, its basic properties, and prove equivalence of ensembles (Proposition \ref{P:EquivalenceofEnsembles}) employing the Circle Method.
Equivalence of ensembles is the essential tool that we employ in subsequent sections to allow transfer of distributions from the Boltzmann model to the uniform measure.
We use this and prove our first main result on the distribution of the multiplicities for small irreducible representations in Section \ref{S:SmallMultiplicities}.
In Section \ref{S:MaxDimHeight}, we show the distribution for the maximum dimension and height. Then in Section \ref{S:LimitShapeTotalIrreps}, we prove distributions for the total number of irreducible representations and the shape.
Finally in Section \ref{S:Conclusion}, we collect some open problems and avenues for further exploration.

\section*{Acknowledgements}
The second author has received funding from the European Research Council (ERC) under the European Union’s Horizon 2020 research and innovation programme (grant agreement No. 101001179).
The third author was supported by the SFB/TRR 191 “Symplectic Structure in Geometry, Algebra and Dynamics”, funded by the DFG (Projektnummer 281071066 TRR 191).
We thank Dan Romik for sharing this problem with us, and we thank Siu Hang Man for useful discussions on Weyl differencing.

\section{The Average Number of Irreducible Representations of Dimension Less Than $x$}\label{S:PartialSumIrreps}
We begin by proving Proposition \ref{prop:irrep_partial_sum_asymptotics}, a preliminary result that gives an asymptotic expansion for the average number of irreducible representations of dimension less than $x$.
For this purpose, we introduce in this section the polynomial
\begin{equation*}
h_r(\bm{k}) :=
\prod_{1 \leq \ell \leq j\leq r}\left(k_\ell+\dots + k_j \right),
\end{equation*}
which is homogeneous of degree $\frac{(r+1)r}{2}$ and
which up to the constant factor $c_r$ gives the dimension of the irreducible representation $\rho_{\bm{k}}$ as in \eqref{E:akDef}. We start with a technical lemma.

\begin{lem}\label{lem:irrep_polynomial_bound}
For integers $r \geq 2$ and $1 \leq a \leq r$ and $\bm{y} = (y_1,\ldots,y_r) \in \IR_{\geq 0}^r$, we have
\begin{equation*}
h_r(\bm{y})^{r-1} \geq y_a^\frac{r(r-1)}{2} h_{r-1}(\bm{y}_{\widehat{a}})^r
\end{equation*}
where $\bm{y}_{\widehat{a}} \in \IR_{\geq 0}^{r-1}$ is the vector $\bm{y}$ with $y_a$ omitted.\footnote{Note that both sides of the proposed inequality are homogeneous polynomials of degree $\frac{1}{2} (r+1)r(r-1)$.}
\end{lem}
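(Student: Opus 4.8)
The plan is to first establish the case where the omitted index $a$ is an \emph{endpoint} of $\{1,\dots,r\}$, and then to bootstrap to general $a$ by cutting the chain at position $a$. Since both sides of the claimed inequality are continuous in $\bm{y}$, it suffices to treat $\bm{y}\in\IR_{>0}^r$; all quantities below are then positive and may be cancelled freely.

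\textbf{Step 1 (endpoint case).} For $a=r$, I would split the factors of $h_r(\bm{y})=\prod_{1\le\ell\le j\le r}(y_\ell+\dots+y_j)$ according to whether the block $\{\ell,\dots,j\}$ contains $r$, obtaining $h_r(\bm{y})=h_{r-1}(\bm{y}_{\widehat{r}})\,R(\bm{y})$ with $R(\bm{y}):=\prod_{\ell=1}^{r}(y_\ell+\dots+y_r)$; dividing by $h_{r-1}(\bm{y}_{\widehat{r}})^{r-1}$ reduces the claim to $R(\bm{y})^{r-1}\ge y_r^{\,r(r-1)/2}\,h_{r-1}(\bm{y}_{\widehat{r}})$. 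Then I would pass to the partial sums $v_\ell:=y_\ell+\dots+y_r$, so that $v_1\ge v_2\ge\dots\ge v_r=y_r>0$, $R(\bm{y})=\prod_{\ell=1}^r v_\ell$, and $h_{r-1}(\bm{y}_{\widehat{r}})=\prod_{1\le\ell<k\le r}(v_\ell-v_k)$. The bound $0\le v_\ell-v_k\le v_\ell$ gives $h_{r-1}(\bm{y}_{\widehat{r}})\le\prod_{\ell=1}^r v_\ell^{\,r-\ell}$, so it remains only to check $\prod_{\ell=1}^r v_\ell^{\,\ell-1}\ge y_r^{\,r(r-1)/2}$, which is immediate from $v_\ell\ge y_r$ and $\sum_{\ell=1}^r(\ell-1)=\tfrac{r(r-1)}{2}$. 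The case $a=1$ then follows by applying this to the reversed tuple $(y_r,\dots,y_1)$, using that $h_n$ is invariant under reversing its arguments.

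\textbf{Step 2 (general case, $2\le a\le r-1$).} Here I would sort the blocks of $\{1,\dots,r\}$ relative to $a$: those avoiding $a$ are blocks of $\{1,\dots,a-1\}$ or of $\{a+1,\dots,r\}$, contributing $P_1:=h_{a-1}(y_1,\dots,y_{a-1})$ and $P_2:=h_{r-a}(y_{a+1},\dots,y_r)$; then come the singleton $\{a\}$; the blocks $[\ell,a]$ with $\ell<a$, contributing $Q_L:=\prod_{\ell<a}(y_\ell+\dots+y_a)$; the blocks $[a,j]$ with $j>a$, contributing $Q_R:=\prod_{j>a}(y_a+\dots+y_j)$; and the crossing blocks $[\ell,j]$ with $\ell<a<j$, contributing $Q_M:=\prod_{\ell<a<j}(y_\ell+\dots+y_j)$. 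This gives $h_r(\bm{y})=y_a\,P_1P_2\,Q_LQ_R\,Q_M$, while deleting $y_a$ changes each crossing block's sum to $(y_\ell+\dots+y_j)-y_a$, so $h_{r-1}(\bm{y}_{\widehat{a}})=P_1P_2\,Q_M'$ with $Q_M':=\prod_{\ell<a<j}\big((y_\ell+\dots+y_j)-y_a\big)$. Writing $p:=a-1$, $m:=r-a$ and cancelling $y_a^{\,r-1}$ and $(P_1P_2)^{r-1}$, the claim becomes
\[
(Q_LQ_R)^{r-1}Q_M^{\,r-1}\ \ge\ y_a^{\,\frac{(r-1)(r-2)}{2}}\,P_1P_2\,(Q_M')^{\,r}.
\]

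\textbf{Step 3 (finishing, and the main obstacle).} Three elementary facts should close the gap. (i) Each crossing block dominates its $y_a$-truncation, so $(Q_M')^{r}\le Q_M^{\,r-1}Q_M'$. (ii) For a crossing block, setting $u:=y_\ell+\dots+y_{a-1}$ and $w:=y_{a+1}+\dots+y_j$, we have $(u+y_a)(y_a+w)=y_a(u+w)+(y_a^2+uw)\ge y_a(u+w)$; multiplying over all $pm$ crossing blocks — in which each factor $u+y_a$ occurs $m$ times and each $y_a+w$ occurs $p$ times — yields $Q_L^{\,m}Q_R^{\,p}\ge y_a^{\,pm}Q_M'$. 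Substituting (i) and (ii) into the displayed inequality reduces it to $Q_L^{\,p}Q_R^{\,m}\ge y_a^{\,\frac{p(p-1)+m(m-1)}{2}}P_1P_2$, using $(p+m)(p+m-1)=p(p-1)+m(m-1)+2pm$. (iii) Step 1 applied to $(y_1,\dots,y_a)$, together with the identity $h_a(y_1,\dots,y_a)=y_a\,P_1\,Q_L$, gives $Q_L^{\,p}\ge y_a^{\,p(p-1)/2}P_1$; symmetrically, Step 1 in its $a=1$ form applied to $(y_a,\dots,y_r)$, together with $h_{r-a+1}(y_a,\dots,y_r)=y_a\,P_2\,Q_R$, gives $Q_R^{\,m}\ge y_a^{\,m(m-1)/2}P_2$; the product of these two is exactly the required bound. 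The cases $a\in\{1,r\}$ are covered by Step 1 directly. I expect the delicate part to be the bookkeeping here: the factorizations of $h_r(\bm{y})$ and $h_{r-1}(\bm{y}_{\widehat{a}})$ share the ``non-crossing'' factor $P_1P_2$, and the key realization is that after cancelling it the surviving inequality again decouples into two shifted copies of the endpoint case — in particular that the exponents conspire so $Q_L$ is used $m$ times against the crossing blocks while $p$ copies remain to dominate $P_1$, and symmetrically for $Q_R$. A secondary subtlety is that the crude bound $h_{r-1}(\bm{y}_{\widehat r})\le\prod_\ell v_\ell^{\,r-\ell}$ in Step 1 looks lossy but is in fact sharp precisely in the regime that controls the inequality (all of $y_1,\dots,y_{r-1}$ comparable and $y_r$ small), which is why it suffices.
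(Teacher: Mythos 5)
Your proof is correct, and it takes a genuinely different route from the paper's, so a comparison is worthwhile. Both arguments pass through the same reduced inequality
\[
Q_L^{\,r-1}Q_R^{\,r-1} \;\geq\; y_a^{\frac{(r-1)(r-2)}{2}}\, h_{r-1}\!\left(\bm{y}_{\widehat{a}}\right),
\]
and both use the crossing-block estimate $y_{\ell,a}\,y_{a,j}\geq y_a\,\widehat{y}_{\ell,j}$, but the route to the reduction and the way it is finished differ. The paper proves the whole statement in one pass: it first observes $y_{\ell,j}\geq\widehat{y}_{\ell,j}$ to extract $y_a\,h_{r-1}(\bm{y}_{\widehat a})\,Q_L Q_R$ from $h_r(\bm{y})$, then closes the gap with a per-factor drop-term bound
$y_{\ell,a}^{\,r-1}\geq y_{\ell,a}^{\,r-a}\,y_a^{\,\ell-1}\prod_{j=\ell}^{a-1}y_{\ell,j}$ (and its mirror image), and finally applies the crossing-block bound — this handles $a\in\{1,r\}$ as trivial edge cases, so no separate endpoint lemma is needed. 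You instead first establish the endpoint case by the change of variables $v_\ell:=y_\ell+\dots+y_r$, which exposes $h_{r-1}(\bm{y}_{\widehat r})=\prod_{\ell<k}(v_\ell-v_k)$ as a Vandermonde-type product of differences — a structural observation the paper's proof does not use — and derive $R^{r-1}\geq y_r^{r(r-1)/2}h_{r-1}$ from the one-line inequality $v_\ell-v_k\leq v_\ell$. You then bootstrap the interior case by cutting the chain at $a$ and invoking the endpoint case twice on the sub-tuples $(y_1,\dots,y_a)$ and $(y_a,\dots,y_r)$; these supply precisely $Q_L^{\,p}\geq y_a^{p(p-1)/2}P_1$ and $Q_R^{\,m}\geq y_a^{m(m-1)/2}P_2$, which the paper obtains by repeating its drop-term bound. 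The net effect is the same inequality proved with the same raw ingredients (monotonicity of consecutive sums and the crossing-block product), but your version is modular and reveals the Vandermonde structure, at the cost of being longer and requiring the two endpoint applications, while the paper's version is a tighter one-paragraph argument with no auxiliary lemma. Your bookkeeping of the exponents ($(r-1)(r-2)=p(p-1)+m(m-1)+2pm$ with $p=a-1$, $m=r-a$) is correct, and the observation that $h_n$ is reversal-invariant legitimately reduces $a=1$ to $a=r$.
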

\begin{proof}
We start by writing
\begin{equation*}
h_r(\bm{y}) = \prod_{1 \leq \ell \leq j\leq r} y_{\ell,j}
\andd
h_{r-1}(\bm{y}_{\widehat{a}}) =
\prod_{1 \leq \ell \leq j < a} y_{\ell,j}
\prod_{a<\ell\leq j \leq r} y_{\ell,j}
\prod_{1 \leq \ell < a < j \leq r} \widehat{y}_{\ell,j},
\end{equation*}
where
\begin{equation*}
y_{\ell,j} :=  y_\ell + y_{\ell+1} + \ldots + y_j
\andd
\widehat{y}_{\ell,j} :=  y_\ell + \ldots + y_{a-1} + y_{a+1} + \ldots + y_j .
\end{equation*}
By dropping the term $y_a$ we have $y_{\ell,j} \geq \widehat{y}_{\ell,j}$ for any $1 \leq \ell < a < j \leq r$, which then leads to
\begin{equation*}
h_r(\bm{y}) \geq
y_a
h_{r-1}(\bm{y}_{\widehat{a}})
\prod_{1 \leq \ell < a} y_{\ell,a}
\prod_{a<j\leq r} y_{a,j}.
\end{equation*}
So the lemma statement follows if we can prove that
\begin{equation}\label{eq:irrep_polynomial_bound_lem_reduced}
\prod_{1 \leq \ell < a} y_{\ell,a}^{r-1}
\prod_{a<j\leq r} y_{a,j}^{r-1}
\geq
y_a^{\frac{1}{2} (r-1)(r-2)}  h_{r-1}(\bm{y}_{\widehat{a}}) .
\end{equation}
As above we can drop terms and bound
\begin{equation*}
y_{\ell,a}^{r-1} \geq
y_{\ell,a}^{r-a}  y_a^{\ell-1} \prod_{j=\ell}^{a-1} y_{\ell,j}
\mbox{ for any } 1 \leq \ell < a
\ \mbox{ and } \
y_{a,j}^{r-1} \geq
y_{a,j}^{a-1}  y_{a}^{r-j} \prod_{\ell=a+1}^{j} y_{\ell,j}
\mbox{ for any } a<j\leq r .
\end{equation*}
Combining these two facts we find
\begin{equation*}
\prod_{1 \leq \ell < a} y_{\ell,a}^{r-1}
\prod_{a<j\leq r} y_{a,j}^{r-1}
\geq
y_a^{\frac{1}{2} (a-1)(a-2) + \frac{1}{2} (r-a)(r-a-1)}
\prod_{1 \leq \ell \leq j < a} y_{\ell,j}
\prod_{a<\ell\leq j \leq r} y_{\ell,j}
\prod_{1 \leq \ell < a < j \leq r} y_{\ell,a} y_{a,j}
\end{equation*}
including the edge cases $a=1$ and $a=r$.
Noting that the terms in the final product can be bounded as
$y_{\ell,a} y_{a,j} \geq y_a \widehat{y}_{\ell,j}$, the inequality \eqref{eq:irrep_polynomial_bound_lem_reduced} follows.
\end{proof}

Our next task is to use Lemma \ref{lem:irrep_polynomial_bound} to bound (and show the convergence of) certain integrals that appear in estimating the average number of irreducible representations.
\begin{lem}\label{lem:irrep_polynomial_integral}
For $\ve > 0$, $r\in\N$, and $1 \leq a \leq r$, we have
\begin{equation*}
I_r := \int_{\substack{\bm{y} \in \IR_{> 0}^r \\ h_r(\bm{y}) \leq 1}} \bm{dy} < \infty
\andd
I_{r,a,\ve} :=
\int_{\substack{\bm{y} \in \IR_{> 0}^r \\ h_r(\bm{y}) \leq 1 \\ y_a \leq \ve}} \bm{dy}
\ll_r \ve^{\frac{1}{r}}.
\end{equation*}
\end{lem}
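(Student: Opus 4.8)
The plan is to reduce everything to the pointwise inequality
\[
h_r(\bm{y}) \;\ge\; y_a^{\,r}\, h_{r-1}(\bm{y}_{\widehat{a}}) \qquad (\bm{y} \in \IR_{\ge 0}^{r},\ 1 \le a \le r),
\]
which follows from the inequality $h_r(\bm{y}) \ge y_a\, h_{r-1}(\bm{y}_{\widehat a}) \prod_{1\le\ell<a} y_{\ell,a} \prod_{a<j\le r} y_{a,j}$ appearing in the proof of Lemma \ref{lem:irrep_polynomial_bound} by bounding each of the $(a-1)+(r-a)=r-1$ factors $y_{\ell,a}$, $y_{a,j}$ below by $y_a$. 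Since $h_m$ is homogeneous of degree $\tfrac{m(m+1)}{2}$, this pairs with the scaling identity $\operatorname{vol}\{\bm{z}\in\IR_{>0}^{m}: h_m(\bm{z})\le c\} = c^{2/(m+1)} I_m$, whence $\int_{h_m(\bm{z})<c} h_m(\bm{z})^{-1/(m+1)}\,\bm{dz} = 2 c^{1/(m+1)} I_m$ as soon as $I_m<\infty$. Both claims then follow by induction on $r$ together with some bookkeeping of constants.

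\emph{Finiteness.} I would show $I_r<\infty$ by induction, starting from $I_1=1$. Partition $\{\bm{y}>0: h_r(\bm{y})\le1\}$ by which coordinate $y_b$ is largest. On the piece where $y_b$ dominates, $\|\bm{y}_{\widehat b}\|_\infty \le y_b \le h_{r-1}(\bm{y}_{\widehat b})^{-1/r}$ (the last step from the pointwise inequality and $h_r\le1$), and since $h_{r-1}$ is monotone in each variable this forces $h_{r-1}(\bm{y}_{\widehat b}) < C_r := \max(1, h_{r-1}(\bm{1}))$; integrating $y_b$ over $(0, h_{r-1}(\bm{y}_{\widehat b})^{-1/r}]$ bounds the piece by $\int_{h_{r-1}(\bm{z})<C_r} h_{r-1}(\bm{z})^{-1/r}\,\bm{dz} = 2C_r^{1/r} I_{r-1}$, so $I_r \le 2r\,C_r^{1/r} I_{r-1}$. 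The exponent $r$ in the pointwise inequality is precisely what lets the $y_b$-integral converge against the $\tfrac{2}{r+1}$-homogeneity of $h_{r-1}$; the form of Lemma \ref{lem:irrep_polynomial_bound} as stated would not suffice here.

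\emph{A weighted auxiliary bound.} Next I would prove, by induction on $m$, that for $0<\ve<1$ and $C>0$,
\[
K_{m,a,\ve}(C) \;:=\; \int_{\substack{\bm{z}\in\IR_{>0}^{m},\ h_m(\bm{z})<C \\ z_a \le \ve}} h_m(\bm{z})^{-\frac{1}{m+1}}\,\bm{dz} \;\ll_{m,C}\; \ve^{\frac{1}{m+1}}.
\]
The base case $m=1$ is $K_{1,1,\ve}(C)=2\min(C,\ve)^{1/2}\le 2\ve^{1/2}$. For $m\ge2$, split by the largest coordinate $z_b$: if $b=a$ then every $z_i\le\ve$, and rescaling $\bm{z}=\ve\bm{w}$ gives $\le\ve^{m/2}\int_{(0,1)^m}h_m(\bm{w})^{-1/(m+1)}\bm{dw}\ll_m\ve^{m/2}\le\ve^{1/(m+1)}$ (the $\bm{w}$-integral is finite by the scaling identity and $I_m<\infty$); if $b\ne a$ then, as in the finiteness step, $h_{m-1}(\bm{z}_{\widehat b})<C^*_m:=\max(C,h_{m-1}(\bm{1}))$ and $z_b<(C/h_{m-1}(\bm{z}_{\widehat b}))^{1/m}$, so bounding $h_m(\bm{z})^{-1/(m+1)}\le z_b^{-m/(m+1)}h_{m-1}(\bm{z}_{\widehat b})^{-1/(m+1)}$ and integrating out $z_b$ leaves $\ll_{m,C} K_{m-1,a',\ve}(C^*_m)\ll_{m,C}\ve^{1/m}\le\ve^{1/(m+1)}$ by the inductive hypothesis, $a'$ being the position of the marked coordinate inside $\bm{z}_{\widehat b}$.

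\emph{Conclusion and the main difficulty.} For $0<\ve<1$ one decomposes $\{h_r(\bm{y})\le1,\ y_a\le\ve\}$ by the largest coordinate $y_b$: the piece with $b=a$ has all $y_i\le\ve$ and volume $\le\ve^r\le\ve^{1/r}$, while for $b\ne a$ integrating out $y_b$ exactly as in the finiteness step bounds the piece by $K_{r-1,a',\ve}(C_r)\ll_r\ve^{1/r}$; summing and noting that $\ve\ge1$ is trivial ($I_{r,a,\ve}\le I_r$) gives $I_{r,a,\ve}\ll_r\ve^{1/r}$. The main obstacle is conceptual rather than computational: since $h_r(\bm{y})\to0$ as soon as any single coordinate tends to $0$, the constraint $h_r(\bm{y})\le1$ imposes no restriction on the remaining coordinates once one coordinate is left free, so one cannot simply integrate a variable out first — the largest-coordinate decomposition is the device that makes the transverse domain genuinely compact — and the technical residue, namely propagating $C_r$ and $C^*_m$ and verifying that the weighted integrals $K_{m,a,\ve}$ converge, is where carrying the weight $h_m^{-1/(m+1)}$ through the induction becomes essential.
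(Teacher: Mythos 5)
Your proof is correct, but it follows a genuinely different and noticeably longer route than the paper's. The paper uses Lemma~\ref{lem:irrep_polynomial_bound} directly: from $h_r(\bm{y})\le1$ it deduces $h_{r-1}(\bm{y}_{\widehat{a}})\le y_a^{-(r-1)/2}$, rescales $\bm{y}_{\widehat{a}}\mapsto y_a^{-1/r}\bm{y}_{\widehat{a}}$ so that the transverse constraint becomes $h_{r-1}\le1$, and then integrates $y_a$ over $(0,\ve]$; this gives $I_{r,a,\ve}\le r\,I_{r-1}\,\ve^{1/r}$ in essentially one step and settles finiteness at the same time. You instead integrate out the largest coordinate first, which requires the different pointwise bound $h_r\ge y_a^{\,r}\,h_{r-1}(\bm{y}_{\widehat{a}})$. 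That bound is correct and easy to derive (drop $y_a$ from all mixed sums to compare with $h_{r-1}(\bm{y}_{\widehat{a}})$, then bound each of the $r-1$ remaining linear factors containing $y_a$ below by $y_a$), but it is \emph{incomparable} to Lemma~\ref{lem:irrep_polynomial_bound}: yours is sharper as an upper bound on the largest coordinate, the paper's is sharper as an upper bound on $h_{r-1}$ for fixed $y_a$. Your remark that Lemma~\ref{lem:irrep_polynomial_bound} ``would not suffice'' is therefore true only relative to your largest-coordinate decomposition, where it produces the non-integrable weight $h_{r-1}^{-2/(r-1)}$; for the paper's decomposition it gives exactly the right exponent. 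Because your route integrates the dominant coordinate first, you then have to carry the weight $h_m^{-1/(m+1)}$ through a second induction on $K_{m,a,\ve}(C)$ and track the constants $C_r$, $C^*_m$ --- machinery the paper avoids entirely. Since Lemma~\ref{lem:irrep_polynomial_bound} is proved and available anyway, the paper's route is the economical one, but your argument is sound throughout.
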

\begin{proof}
The lemma statement trivially holds for $r=1$ and we apply induction for $r \geq 2$.
To check the convergence of the improper integral $I_r$, we need to check that the boundary contributions as $y_a \to 0^+$ are finite.\footnote{Note that $h_r (\bm{y})$ tends to zero near those boundaries, so the integration region is tending to infinity.}
This is in turn assured by our claim for $I_{r,a,\ve}$, which we prove next.
For this purpose, we use  Lemma \ref{lem:irrep_polynomial_bound} to enlarge the integration region and get the upper bound
\begin{equation*}
I_{r,a,\ve} \leq
\int_{\substack{\!\!\!\!\bm{y} \in \IR_{> 0}^r, \ y_a \leq \ve
\\ h_{r-1}(\bm{y}_{\widehat{a}})  \leq  y_a^{-\frac{r-1}{2}}}} \bm{dy} .
\end{equation*}
Since $h_{r-1}(\bm{y}_{\widehat{a}})$ is homogeneous of degree $\frac{r(r-1)}{2}$, we can rescale the vector $\bm{y}_{\widehat{a}}$ as $\bm{y}_{\widehat{a}} \mapsto y_a^{-\frac{1}{r}}  \bm{y}_{\widehat{a}}$ to find
$I_{r,a,\ve} \leq r  I_{r-1} \ve^{\frac{1}{r}}$.
\end{proof}

\begin{rem}
Since $a(\k) = \tfrac{1}{c_r} h_r(\k)$, the constant $C_r$ defined in \eqref{eq:introduction_C_r_alpha_n_definitions} satisfies
\begin{equation}\label{eq:Cr_Ir_relation}
C_r = I_r c_r^{\frac{2}{r+1}} < \infty .
\end{equation}
\end{rem}

Next we use Lemma \ref{lem:irrep_polynomial_integral} to find bounds on the number of lattice points bounded by $h_r$.
\begin{lem}\label{lem:irrep_polynomial_lattice_counting}
For $r\in\N$ and $x \in \IR_{> 0}$ we have
\begin{equation*}
\sum_{\substack{\bm{k} \in \IN^r \\ h_r (\bm{k}) \leq x}} 1
= I_r  x^{\frac{2}{r+1}} + O_r \!\lp x^{\frac{2 (r-1)}{r^2}} \rp.
\end{equation*}
\end{lem}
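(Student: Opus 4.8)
The plan is to estimate the lattice point count by the volume of the region $\{\bm{y} \in \IR_{>0}^r : h_r(\bm{y}) \leq x\}$, which by homogeneity of degree $\frac{r(r+1)}{2}$ equals $I_r\, x^{\frac{2}{r+1}}$, and then control the error by a standard surface-vs-volume argument, inducting on $r$. Concretely, to each lattice point $\bm{k}$ with $h_r(\bm{k}) \leq x$ we associate the unit cube $\bm{k} + [0,1)^r$; these cubes are disjoint, so their union has volume exactly equal to the count we want. It remains to compare this union with the region $R_x := \{\bm{y} \in \IR_{>0}^r : h_r(\bm{y}) \leq x\}$. Since $h_r$ is monotone increasing in each coordinate on $\IR_{\geq 0}^r$, any cube $\bm{k} + [0,1)^r$ with $h_r(\bm{k}) \leq x$ is contained in $R_{x'}$ for a mildly inflated $x'$, and conversely $R_x$ is covered by the cubes attached to lattice points $\bm{k}$ with $h_r(\bm{k}) \leq x$ together with a boundary layer. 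Thus the discrepancy between the count and $I_r x^{2/(r+1)}$ is bounded by the number of lattice points $\bm{k} \in \IN^r$ for which the cube $\bm{k}+[0,1]^r$ meets the level surface $h_r = x$, i.e. for which $x' \leq h_r(\bm{k}) \leq x''$ for some $x', x''$ differing from $x$ by $O(x^{1 - 2/(r(r+1))})$ (the Lipschitz constant of $h_r$ on the relevant scale), plus the lattice points near the coordinate hyperplanes where the region is unbounded.

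The main work is to bound this boundary count. The near-hyperplane contribution — lattice points $\bm{k}$ with some $k_a$ bounded but $h_r(\bm{k})$ possibly up to $x$ — is exactly what Lemma \ref{lem:irrep_polynomial_integral} was designed to handle: by the estimate $I_{r,a,\ve} \ll_r \ve^{1/r}$ together with the scaling $h_r(\bm{k}) \leq x \iff h_r(x^{-2/(r(r+1))}\bm{k}) \leq 1$, the number of lattice points with $h_r(\bm{k}) \leq x$ and $k_a \leq 1$ (say) is $O_r(x^{2/(r+1)} \cdot (x^{-2/(r(r+1))})^{1/r}) = O_r(x^{2(r-1)/r^2})$, which already matches the claimed error term. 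For the level-surface layer away from the hyperplanes, I would slice on the coordinate $k_a$ that is smallest (or just fix $a=1$): for each fixed value of $y_a = t$ in a suitable range, the slice of the layer is governed by the $(r-1)$-dimensional count of $\bm{k}_{\widehat a}$ with $h_{r-1}(\bm{k}_{\widehat a})$ in an interval of relative width $O(x^{-2/(r(r+1))})$ scaled appropriately, to which one applies the inductive hypothesis for $r-1$ (its main term contributing the volume of a thin shell, hence small, and its error term summing acceptably). Summing over $t$ and combining with the hyperplane estimate gives the total error $O_r(x^{2(r-1)/r^2})$.

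The step I expect to be the genuine obstacle is bookkeeping the error in the inductive slicing so that the exponents actually close up: one must check that $\sum_{t} \big(\text{shell volume in dimension } r-1\big) + \sum_t \big(\text{error}_{r-1}\big)$ does not exceed $x^{2(r-1)/r^2}$, and in particular that the dimension $r-1$ error exponent $\frac{2(r-2)}{(r-1)^2}$, after rescaling each slice by the appropriate power of $t$ and integrating $t$ up to roughly $x^{2/(r(r+1))}$, aggregates to something $\leq \frac{2(r-1)}{r^2}$. This is a purely arithmetic check on exponents but it is where the precise shape of the claimed error term is forced, and where the induction could fail if done carelessly; the base case $r=1$ is trivial since $h_1(k)=k$ gives $\lfloor x \rfloor = x + O(1)$ and $I_1 = 1$. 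Finally, Proposition \ref{prop:irrep_partial_sum_asymptotics} follows immediately from this lemma by the substitution $h_r(\bm{k}) = c_r a(\bm{k})$, using $R_r(x) = \sum_{h_r(\bm{k}) \leq c_r x} 1 = I_r (c_r x)^{2/(r+1)} + O_r(x^{2(r-1)/r^2}) = C_r x^{2/(r+1)} + O_r(x^{2(r-1)/r^2})$ via \eqref{eq:Cr_Ir_relation}.
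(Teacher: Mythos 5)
Your near‑hyperplane computation is exactly the paper's error term, but you have missed the clean two‑sided sandwich that eliminates the level‑surface layer entirely, and the inductive slicing you propose as a substitute does not obviously close. Since $h_r$ is nondecreasing in each coordinate, one can compare the lattice count with \emph{two} volumes, one from each side: each $\bm k\in\IN^r$ with $h_r(\bm k)\le x$ yields a cube $\bm k+[-1,0]^r$ on which $h_r\le h_r(\bm k)\le x$, so the count is at most $\int_{\{\bm y>0,\ h_r(\bm y)\le x\}}\bm{dy}$; conversely, every $\bm y\in\IR_{\ge1}^r$ with $h_r(\bm y)\le x$ has $\lfloor\bm y\rfloor\in\IN^r$ with $h_r(\lfloor\bm y\rfloor)\le x$, so the count is at least $\int_{\{\bm y\ge1,\ h_r(\bm y)\le x\}}\bm{dy}$. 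The discrepancy between these two integrals lives entirely in the sliver $\{\,h_r\le x,\ y_a\le1\ \text{for some }a\,\}$, which after rescaling by $x^{-2/(r(r+1))}$ is exactly the quantity $I_{r,a,\varepsilon(x)}$ with $\varepsilon(x)=x^{-2/(r(r+1))}$ controlled by Lemma~\ref{lem:irrep_polynomial_integral}. There is no separate surface‑shell contribution to estimate, and no induction on $r$ inside this lemma.

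By contrast, your one‑sided cube comparison leaves a level‑surface layer to bound, and the slicing argument you sketch for it is not on firm ground: if you freeze a coordinate $k_a=t$, the polynomial $\bm k_{\widehat a}\mapsto h_r(\ldots,t,\ldots)$ is \emph{not} $h_{r-1}(\bm k_{\widehat a})$ (it is not even homogeneous), so the inductive hypothesis at rank $r-1$ does not apply directly; the only available relation is the one‑sided inequality of Lemma~\ref{lem:irrep_polynomial_bound}, which gives an upper bound $h_{r-1}(\bm y_{\widehat a})\le x^{(r-1)/r} y_a^{-(r-1)/2}$ but no comparable lower bound, so counting a \emph{shell} $x'\le h_r(\bm k)\le x$ slice‑by‑slice is not a clean reduction. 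You flag this bookkeeping as ``the genuine obstacle''; in fact, the obstacle dissolves once one realizes the monotone sandwich reduces the whole error to the near‑hyperplane integral, which you already identified correctly.
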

\begin{proof}
Note that $h_r (\bm{y})$ is nondecreasing in each of its variables since we have a polynomial with nonnegative coefficients, so we have
\begin{equation*}
\int_{\substack{\bm{y} \in \IR_{\geq 1}^r \\ h_r(\bm{y}) \leq x}} \bm{dy}
\leq
\sum_{\substack{\bm{k} \in \IN^r \\ h_r (\bm{k}) \leq x}} 1
\leq
\int_{\substack{\bm{y} \in \IR_{> 0}^r \\ h_r(\bm{y}) \leq x}} \bm{dy} .
\end{equation*}
Combining these two bounds and recalling that $h_r$ is a homogeneous of degree $\frac{r(r+1)}{2}$, we find
\begin{equation*}
0 \leq
x^{\frac{2}{r+1}}
\int_{\substack{\bm{y} \in \IR_{> 0}^r \\ h_r(\bm{y}) \leq 1}} \bm{dy}
-
\sum_{\substack{\bm{k} \in \IN^r \\ h_r (\bm{k}) \leq x}} 1
\leq
x^{\frac{2}{r+1}}
\sum_{a=1}^r
\int_{\substack{\bm{y} \in \IR_{> 0}^r, \  h_r(\bm{y}) \leq 1 \\ y_a \leq \ve (x) }}
\bm{dy}
\quad \mbox{with } \ve (x) := x^{-\frac{2}{r(r+1)}}.
\end{equation*}
The result then follows by Lemma \ref{lem:irrep_polynomial_integral}.
\end{proof}

We are now ready to prove Proposition \ref{prop:irrep_partial_sum_asymptotics}.
\begin{proof}[Proof of Proposition \ref{prop:irrep_partial_sum_asymptotics}]
With $R_r(x) \! = \!\!\!\!\!\!\!\displaystyle\sum_{\substack{\bm{k} \in \IN^r \\ h_r (\bm{k}) \leq c_r x}} \!\!\!\!\!\! 1$, the result follows from  \eqref{eq:Cr_Ir_relation} and Lemma \ref{lem:irrep_polynomial_lattice_counting}.
\end{proof}

\section{Irreducible Representation Dimensions on a Circle}\label{S:MinorArcs}
In Section \ref{S:Boltzmann}, we establish equivalence of ensembles using the Circle Method. Our goal in this section is to establish the lower bounds we use in our minor arc estimates through an inductive application of Weyl differencing type arguments. We state these results separately as preliminary facts on the distribution of the numbers $a_r (\bm{k})$ on a circle.\footnote{
Since we apply induction over the rank $r$, we restore the subscript in $a_r (\bm{k})$ within this section.
}

Our arguments are based on the point that the polynomial $a_r (\bm{k})$
satisfies
\begin{equation*}
a_r (\bm{k}) = \frac{a_{r-1} (\bm{k}_{\widehat{r}})}{r!}
\prod_{1 \leq j \leq r} (k_j + \ldots + k_r),
\end{equation*}
where, as in Section \ref{S:PartialSumIrreps}, we write $\bm{y}_{\widehat{a}} \in \IR_{\geq 0}^{r-1}$ for the vector $\bm{y}$ with $y_a$ omitted.  Note that the second factor above includes all the terms that contain $k_r$.
In particular, in terms of the indeterminate $k_r$,
this is a polynomial of degree $r$ with the leading coefficient $\frac{a_{r-1} (\bm{k}_{\widehat{r}})}{r!}$ and this is what leads to our inductive step. To extract this leading coefficient, we define the {\it finite difference operator} 
$$\DD_h(f(\bm{k})) := f(\bm{k}_{\widehat{r}}, k_r+h) - f(\bm{k}_{\widehat{r}}, k_r)$$
so that
\begin{equation*}
\DD_h^r(f(\bm{k}))
= \sum_{m=0}^r (-1)^{m+r} \mat{r \\ m} f(\bm{k}_{\widehat{r}}, k_r+mh)
\andd
\DD_h^r(a_r(\bm{k})) = a_{r-1} (\bm{k}_{\widehat{r}}) h^r .
\end{equation*}

\begin{prop}\label{prop:sinsquare_sum_lower_bound_general_r_case}
Let $r \in \mathbb{N}_{\geq 2}$, $0 < \ve \leq \frac{1}{32}$, and $\nu_r := \frac{r(r+1)}{2}$.
Then for any integer $N \geq 4$
and any real $\th$ with
$\ve N^{-\nu_r} \! \leq \th \leq \frac{1}{2}$ we have
\begin{equation*}
\# \left\{ \bm{k} \in \LL_{N,r} \!:
2^{-\nu_r} \ve < \{\th a_r (\bm{k}) \} < 1 - 2^{-\nu_r} \ve \right\} \geq \frac{N^r}{32} ,
\end{equation*}
where
\begin{equation*}
\LL_{N,r} := \{ \bm{k} \in \IZ^r \!: N \leq k_j \leq (j+2)N \mbox{ for } j \in \{1,2,\ldots,r\} \}.
\end{equation*}
\end{prop}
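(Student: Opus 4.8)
The plan is to reduce the two‑sided condition on $\{\th a_r(\bm{k})\}$ to the single estimate $\|\th a_r(\bm{k})\|>2^{-\nu_r}\ve$, where $\|x\|$ denotes the distance from $x$ to the nearest integer; this equivalence is immediate since $2^{-\nu_r}\ve<\tfrac14$. I would then induct on $r$, using from the outset the recursion $a_r(\bm{k})=\tfrac{1}{r!}a_{r-1}(\bm{k}_{\widehat{r}})\prod_{1\le j\le r}(k_j+\dots+k_r)$, the resulting difference identity $\DD_h^r(a_r)(\bm{k})=a_{r-1}(\bm{k}_{\widehat{r}})h^r$, the decomposition $\LL_{N,r}=\LL_{N,r-1}\times\{k_r\in\IZ:N\le k_r\le(r+2)N\}$, and the relation $\nu_r=\nu_{r-1}+r$.

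For the base case $r=1$ one has $a_1(k)=k$ and $\LL_{N,1}=[N,3N]\cap\IZ$, of size $2N+1$. If $\ve<\th\le\tfrac12$, then no two consecutive integers $k$ can both satisfy $\|\th k\|\le\tfrac{\ve}{2}$ (that would force $\|\th\|=\th\le\ve$), so at least $N$ values are good. If $\ve N^{-1}\le\th\le\ve$, the bad $k$ lie in at most $2\th N+\ve+1$ intervals of the form $[(m-\tfrac{\ve}{2})/\th,(m+\tfrac{\ve}{2})/\th]$, each containing at most $\tfrac{\ve}{\th}+1$ integers; since $\th\le\ve$ and $\th\ge\ve N^{-1}$, a direct count bounds the bad set by $5\ve N+N+O(1)$, which leaves more than $\tfrac{N}{32}$ good values when $\ve\le\tfrac1{32}$ and $N\ge4$.

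For the inductive step, fix $\bm{k}_{\widehat{r}}\in\LL_{N,r-1}$ and an integer $h\ge1$. For any $k_r$ with $k_r,k_r+h,\dots,k_r+rh$ all in $[N,(r+2)N]$, the difference identity together with subadditivity of $\|\cdot\|$ gives
\[
\bigl\|\th\,a_{r-1}(\bm{k}_{\widehat{r}})\,h^r\bigr\|\le\sum_{m=0}^{r}\mat{r\\m}\bigl\|\th\,a_r(\bm{k}_{\widehat{r}},k_r+mh)\bigr\|\le 2^{r}\max_{0\le m\le r}\bigl\|\th\,a_r(\bm{k}_{\widehat{r}},k_r+mh)\bigr\| .
\]
Hence, whenever $\bm{k}_{\widehat{r}}$ satisfies $\|\th\,a_{r-1}(\bm{k}_{\widehat{r}})\,h^r\|>2^{-\nu_{r-1}}\ve$, every arithmetic progression $k_r,k_r+h,\dots,k_r+rh$ inside $[N,(r+2)N]$ contains a $k_r$ with $\|\th a_r(\bm{k})\|>2^{-\nu_{r-1}-r}\ve=2^{-\nu_r}\ve$. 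When $\ve N^{-\nu_{r-1}}\le\th\le\tfrac12$ I take $h=1$: here $[N,(r+2)N]$ splits into exactly $N$ disjoint blocks of $r+1$ consecutive integers, and by the inductive hypothesis at rank $r-1$ (applied with the same $\ve$ and $\th$) there are at least $N^{r-1}/32$ vectors $\bm{k}_{\widehat{r}}\in\LL_{N,r-1}$ with $\|\th a_{r-1}(\bm{k}_{\widehat{r}})\|>2^{-\nu_{r-1}}\ve$; each such $\bm{k}_{\widehat{r}}$ contributes at least $N$ good values of $k_r$, so the product count is $N^r/32$ as required.

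The remaining regime $\ve N^{-\nu_r}\le\th<\ve N^{-\nu_{r-1}}$, in which the inductive hypothesis at rank $r-1$ does not directly apply to $\th$, is the crux and the step I expect to be the main obstacle. One can dispose of the sub‑range $\th\le c_rN^{-\nu_r}$ trivially: there the bounds $N^{\nu_r}=a_r(N\bm{1})\le a_r(\bm{k})\ll_r N^{\nu_r}$ on $\LL_{N,r}$ force $\th a_r(\bm{k})\in(2^{-\nu_r}\ve,1-2^{-\nu_r}\ve)$ for \emph{every} $\bm{k}$ once the constant $c_r$ is small enough. In the complementary sub‑range $c_rN^{-\nu_r}\le\th<\ve N^{-\nu_{r-1}}$ the value $\th a_r(\bm{k})$ still varies slowly from lattice point to lattice point (its increments in any coordinate direction are $\ll_r\th N^{\nu_r-1}<\ve N^{r-1}$) while sweeping through a range of length $\asymp_r\th N^{\nu_r}$ that is bounded below by a constant, and one must show directly — e.g.\ by estimating the sublevel set $\{\bm{k}\in\LL_{N,r}:\|\th a_r(\bm{k})\|\le2^{-\nu_r}\ve\}$ via the monotonicity of $a_r$, or by a more careful use of the difference identity with a larger step $h$ — that this sublevel set omits at least $N^r/32$ lattice points. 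Making the numerical constants cooperate throughout, so that every stage lands exactly at the claimed $N^r/32$, is the delicate bookkeeping underlying both the inductive step and this final regime.
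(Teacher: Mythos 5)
Your core idea — reducing $\|\th a_r(\bm{k})\|$ to $\|\th a_{r-1}(\bm{k}_{\widehat r})h^r\|$ via the $r$-th finite difference in the last coordinate, then inducting on $r$ — is exactly the mechanism used in the paper. Your Range-1 computation (step $h=1$, $N$ blocks of $r+1$ consecutive $k_r$'s, $2^r$ loss in the constant because $\sum_m\binom{r}{m}=2^r$ and $\nu_r=\nu_{r-1}+r$) is also correct. However there are two genuine gaps that you partially flag but do not close.

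First, your base case is $r=1$, but the inductive step cannot reach $r=2$ from there. The two $\th$-ranges in the inductive step are $[\ve N^{-\nu_{r-1}},\tfrac12]$ and, after the $h=N$ difference you allude to, $[\ve N^{-\nu_r},\tfrac{1}{2N^r}]$ (since $N^r\th$ must itself fall into the inductive range). These two intervals cover $[\ve N^{-\nu_r},\tfrac12]$ precisely when $\ve N^{-\nu_{r-1}}\leq \tfrac{1}{2N^r}$, i.e.\ $2\ve\leq N^{\nu_{r-1}-r}$. Since $\nu_{r-1}-r=\tfrac{r(r-3)}2$, this holds for $r\geq 3$ but fails for $r=2$ once $N>\tfrac1{2\ve}$ (e.g.\ $\ve=\tfrac1{32}$, $N\geq17$ leaves $\th\in(\tfrac1{2N^2},\tfrac{\ve}{N})$ uncovered). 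So the induction genuinely needs to start at $r=2$, and the $r=2$ case cannot be obtained by the one-shot difference argument: the paper devotes the entire Appendix \ref{A:LowerBounds} to it, going up a chain $2k+1\rightsquigarrow k(2j+k)\rightsquigarrow a_2(k,j)$ by successive Weyl-differencing steps with escalating step sizes, plus a careful direct argument on consecutive terms $\{(2k+1)\th\}$ (Lemmas \ref{lem:a2_lower_bound_successive_terms}--\ref{lem:a2_lower_bound_window_ladder3}). You have no analogue of this.

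Second, for $r\geq 3$ you never actually close the ``remaining regime.'' You list three possible avenues (trivial sub-range, a direct sublevel-set estimate via monotonicity, or the larger step $h$) but carry none of them out, and you explicitly call this ``the step I expect to be the main obstacle.'' In fact the paper's fix for $r\geq 3$ is a clean two-line observation: use $\DD_N^r$ instead of $\DD_1^r$, so that $\DD_N^r(a_r)(\bm{k})=N^r a_{r-1}(\bm{k}_{\widehat r})$ and the condition $\ve N^{-\nu_r}\leq\th\leq\tfrac{1}{2N^r}$ becomes $\ve N^{-\nu_{r-1}}\leq N^r\th\leq\tfrac12$, i.e.\ exactly the inductive-hypothesis window with $N^r\th$ in the role of $\th$; together with the $h=1$ range this covers all of $[\ve N^{-\nu_r},\tfrac12]$ when $r\geq3$. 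If you replace your proposed direct sublevel-set estimate with this $h=N$ reduction and take $r=2$ as the (separately proved) base case, your argument would coincide with the paper's; as written, both the base case and the second range are unproved.
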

\begin{proof}
We apply induction over $r$. The base case $r=2$ is the content of Lemma \ref{lem:a2_lower_bound_window_ladder3}. So let $r \geq 3$ and assume that the proposition statement holds if $r$ is replaced with $r-1$.
Our goal is to prove the proposition statement for two separate ranges, namely $\ve N^{-\nu_{r-1}}\leq\th\leq\frac{1}{2}$ and $\ve N^{-\nu_r}\leq\th\leq\frac{1}{2N^r}$ with the latter equivalent to $\ve N^{-\nu_{r-1}} \! \leq N^r \th \leq \tfrac{1}{2}$ because $\nu_r = \nu_{r-1} + r$.
Note that with $r \geq 3$, these two ranges cover the entire interval assumed in the proposition.\footnote{
Note that this statement does not hold for $r=2$, which is why we start with the base case $r=2$ instead of $r=1$.
}

So we first assume that $\ve N^{-\nu_{r-1}} \! \leq \th \leq \frac{1}{2}$ and focus on the subrange $N \leq k_r < (r+2) N$ to rewrite such $k_r$ uniquely as
\begin{equation*}
k_r = N + (r+1)\ell + m
\quad \mbox{with }
\ell \in \{0,1,\ldots,N-1\}
\mbox{ and }
m \in \{0,1,\ldots,r\}.
\end{equation*}
This allows us to lower bound $\# \left\{ \bm{k} \in \LL_{N,r} \!:
2^{-\nu_r} \ve < \{\th a_r (\bm{k}) \} < 1 - 2^{-\nu_r} \ve \right\}$ by
\begin{multline}
\# \big\{ \bm{k}_{\widehat{r}} \in \LL_{N,r-1}, \ell \in \{0,\ldots,N-1\} \!:
2^{-\nu_r} \ve < \{\th a_r (\bm{k}_{\widehat{r}}, N + (r+1)\ell + m) \}
< 1 - 2^{-\nu_r} \ve
\\
\mbox{ for at least one } m \in \{0,\ldots,r\}
\big\}.\!\!\!\!
\label{eq:prop:sinsquare_sum_lower_bound_step1}
\end{multline}
Next note that for any $\bm{k}_{\widehat{r}} \in \LL_{N,r-1}$ and $\ell \in \{0,\ldots,N-1\}$, if we have
\begin{equation*}
2^{-\nu_{r-1}} \ve < \{\th \Delta_1^r(a_r (\bm{k}_{\widehat{r}}, N + (r+1)\ell)) \}
< 1 - 2^{-\nu_{r-1}} \ve,
\end{equation*}
then we find
\begin{equation*}
2^{-\nu_r} \ve < \{\th a_r (\bm{k}_{\widehat{r}}, N + (r+1)\ell + m) \}
< 1 - 2^{-\nu_r} \ve
\mbox{ for at least one } m \in \{0,\ldots,r\}.
\end{equation*}
This is because the numbers $\th a_r (\bm{k}_{\widehat{r}}, N + (r+1)\ell + m)$ can not be all close to integers if the finite difference they sum up to is not close to an integer.
This fact allows us to place a further lower bound on \eqref{eq:prop:sinsquare_sum_lower_bound_step1} as
\begin{equation*}
\# \left\{ \bm{k}_{\widehat{r}} \in \LL_{N,r-1}, \ell \in \{0,\ldots,N-1\} \!:
2^{-\nu_{r-1}} \ve < \{\th\Delta_1^r \left(a_r (\bm{k}_{\widehat{r}}, N + (r+1)\ell)\right) \}
< 1 - 2^{-\nu_{r-1}} \ve  \right\}.
\end{equation*}
Since $\Delta_1^r(a_r(\bm{k}_{\widehat{r}}, N + (r+1)\ell)) =
a_{r-1} (\bm{k}_{\widehat{r}})$ with no $\ell$-dependence, this equals
\begin{equation*}
N \, \# \!\left\{ \bm{k}_{\widehat{r}} \in \LL_{N,r-1} \!:
2^{-\nu_{r-1}} \ve < \{\th a_{r-1} (\bm{k}_{\widehat{r}}) \}
< 1 - 2^{-\nu_{r-1}} \ve  \right\}
\end{equation*}
and we use the inductive hypothesis to confirm the proposition statement. The argument in the other range $\ve N^{-\nu_{r-1}} \! \leq N^r \th \leq \frac{1}{2}$ is
similar: We use $\DD_N^r$, which extracts the factor of $a_{r-1} (\bm{k}_{\widehat{r}})$ needed in the inductive step while also producing the $N^r$ factor appearing in the range of $\th$.
\end{proof}

We state the version that we employ in Section \ref{S:Boltzmann} as a corollary.
\begin{cor}\label{cor:sinsquare_sum_lower_bound}
Let $r \!\in\! \mathbb{N}_{\geq 2}$ and $0 \!< \! \ve \! \leq \! \frac{1}{32}$.
For any $N \!\in\! \IN_{\geq 4}$ and $\th \!\in\! \IR$ with
$\ve N^{-\nu_r} \!\! \leq \! \th \! \leq \! \frac{1}{2}$ we have
\begin{equation*}
\sum_{\bm{k} \in \LL_{N,r}} \sin^2 (\pi  \th a_r(\bm{k})) \geq
C_{r,\varepsilon} N^r
\quad \mbox{with } C_{r,\varepsilon} := \frac{\sin^2 \!\lp 2^{-\nu_r} \pi \ve \rp}{32} .
\end{equation*}
\end{cor}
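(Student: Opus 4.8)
The plan is to deduce Corollary \ref{cor:sinsquare_sum_lower_bound} directly from Proposition \ref{prop:sinsquare_sum_lower_bound_general_r_case} by a trivial counting argument: the sum $\sum_{\bm{k} \in \LL_{N,r}} \sin^2(\pi \th a_r(\bm{k}))$ is bounded below by the contribution of those $\bm{k}$ for which $\{\th a_r(\bm{k})\}$ is bounded away from $0$ and $1$, and on that set each summand is bounded below by a fixed positive constant.

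\textbf{Step 1.} Recall that $\sin^2(\pi \th a_r(\bm{k}))$ depends only on $\th a_r(\bm{k})$ modulo $1$, i.e. $\sin^2(\pi \th a_r(\bm{k})) = \sin^2(\pi \{\th a_r(\bm{k})\})$. The function $x \mapsto \sin^2(\pi x)$ is nonnegative on $[0,1]$, symmetric about $x = \tfrac12$, and increasing on $[0,\tfrac12]$. Hence for any $x$ with $2^{-\nu_r}\ve < \{x\} < 1 - 2^{-\nu_r}\ve$ we have $\sin^2(\pi \{x\}) \geq \sin^2(2^{-\nu_r}\pi\ve)$, since $\{x\}$ lies at distance at least $2^{-\nu_r}\ve$ from the nearest integer.

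\textbf{Step 2.} Restricting the sum to the set $S := \{\bm{k} \in \LL_{N,r} : 2^{-\nu_r}\ve < \{\th a_r(\bm{k})\} < 1 - 2^{-\nu_r}\ve\}$ and dropping all other (nonnegative) terms gives
\begin{equation*}
\sum_{\bm{k} \in \LL_{N,r}} \sin^2(\pi \th a_r(\bm{k})) \geq \sum_{\bm{k} \in S} \sin^2(\pi \th a_r(\bm{k})) \geq \#S \cdot \sin^2\!\lp 2^{-\nu_r}\pi\ve \rp.
\end{equation*}
The hypotheses $r \geq 2$, $0 < \ve \leq \tfrac{1}{32}$, $N \geq 4$, and $\ve N^{-\nu_r} \leq \th \leq \tfrac12$ are exactly those of Proposition \ref{prop:sinsquare_sum_lower_bound_general_r_case}, so $\#S \geq N^r/32$. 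Combining, $\sum_{\bm{k} \in \LL_{N,r}} \sin^2(\pi\th a_r(\bm{k})) \geq \tfrac{1}{32}\sin^2\!\lp 2^{-\nu_r}\pi\ve\rp N^r = C_{r,\ve} N^r$, which is the claim.

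\textbf{Main obstacle.} There is essentially no obstacle here: all the real work — the inductive Weyl-differencing argument controlling the equidistribution of $\th a_r(\bm{k})$ — is already packaged into Proposition \ref{prop:sinsquare_sum_lower_bound_general_r_case}. The only points requiring care are the purely elementary monotonicity/symmetry properties of $\sin^2(\pi x)$ used in Step 1 and checking that the hypotheses of the corollary match those of the proposition verbatim (in particular the constraint $\ve \leq \tfrac{1}{32}$ and $\nu_r = \tfrac{r(r+1)}{2}$). So the proof is a two-line deduction.
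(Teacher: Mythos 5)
Your proof is correct and is the obvious (indeed essentially forced) deduction of the corollary from Proposition \ref{prop:sinsquare_sum_lower_bound_general_r_case}; the paper itself omits the proof for precisely this reason. The monotonicity/symmetry argument in Step 1 and the restriction to $S$ in Step 2 are exactly what is needed, and the hypotheses line up verbatim.
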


\section{The Boltzmann model}\label{S:Boltzmann}

\subsection{Basic properties}
Recall the definition of the Boltzmann model in \eqref{D:Boltzmannmodel}.  The following  proposition lists some useful properties; its proof is elementary and is omitted.
\begin{prop}\label{P:BoltzmannSimpleProperties} For any $q \in (0,1)$, the following properties of the Boltzmann model hold.
\begin{enumerate}[label=\rm(\arabic*),leftmargin=*]
\item We have that $Q_q$ is a probability measure on the set of all finite-dimensional $\mathfrak{sl}_{r+1}(\mathbb{C})$-representations including the ``empty representation" of dimension zero.
\item The $X_{\k}$ are independent under $Q_q$.  Moreover, they are geometrically distributed:
$$
Q_q(X_{\k}=\ell)=q^{\ell a({\k})}\left(1-q^{a({\k})}\right), \where \ell \in \mathbb{N}_0.
$$
\item If conditioned on the event $\dim=n$, then $Q_q$ agrees with the uniform measure.  That is,
$$
Q_q( \cdot | \dim=n)=P_n(\cdot).
$$
\end{enumerate}
\end{prop}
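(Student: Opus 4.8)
The plan is to derive all three statements from the single bijection $\rho \mapsto (X_{\k}(\rho))_{\k \in \IN^r}$ between finite-dimensional $\mathfrak{sl}_{r+1}(\mathbb{C})$-representations (up to isomorphism) and finitely supported sequences in $\mathbb{N}_0^{\IN^r}$ -- this bijection is exactly Weyl's complete reducibility theorem combined with the classification of the irreducibles $\rho_{\k}$ recalled in the introduction. Under this bijection one has $\dim(\rho) = \sum_{\k} a(\k) X_{\k}(\rho)$ by \eqref{E:dimensionconstrain}, so that
\[
Q_q(\rho) = \prod_{\k \in \IN^r} q^{\,a(\k) X_{\k}(\rho)}\bigl(1 - q^{a(\k)}\bigr),
\]
a product in which all but finitely many factors equal $1 - q^{a(\k)}$. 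Everything below is a bookkeeping consequence of this formula.

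For (1), I would first record that the infinite product $\prod_{\k}(1-q^{a(\k)})$ converges to a nonzero limit: since $a(\k) \geq 1$ and $\varrho_r(m) = \#\{\k : a(\k)=m\}$ grows only polynomially in $m$ (Proposition \ref{prop:irrep_partial_sum_asymptotics}), the series $\sum_{\k} q^{a(\k)} = \sum_{m \geq 1} \varrho_r(m) q^m$ converges for $q \in (0,1)$. Then, since all terms are nonnegative, Tonelli lets me interchange the sum over finitely supported multiplicity sequences with the product, giving
\[
\sum_{\rho} Q_q(\rho) = \Bigl(\prod_{\k}(1-q^{a(\k)})\Bigr)\sum_{n \geq 0} p_r(n) q^n = \prod_{\k}(1-q^{a(\k)}) \cdot \prod_{\k}\frac{1}{1-q^{a(\k)}} = 1,
\]
using the product formula for the generating function of $p_r(n)$ from the introduction. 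As each $Q_q(\rho) \geq 0$, this shows $Q_q$ is a probability measure, with the empty representation accounting for the $n=0$ term.

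For (2), the displayed factorization of $Q_q(\rho)$ is precisely a product of independent geometric laws: fixing $X_{\k} = \ell$ and summing the remaining factors (each summing to $1$ over $\mathbb{N}_0$) yields $Q_q(X_{\k} = \ell) = q^{\ell a(\k)}(1-q^{a(\k)})$, and the joint law of any finite subcollection of the $X_{\k}$ factors into the product of these marginals, which is the definition of independence. For (3), I would compute $Q_q(\dim = n) = \sum_{\dim(\rho) = n} Q_q(\rho) = q^n\prod_{\k}(1-q^{a(\k)})\cdot p_r(n)$ and then, for any $\rho$ with $\dim(\rho)=n$, divide to get $Q_q(\rho \mid \dim = n) = Q_q(\rho)/Q_q(\dim = n) = 1/p_r(n) = P_n(\rho)$.

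There is essentially no obstacle here -- the argument is the elementary one the authors omit. The only two points worth stating explicitly are the absolute convergence of the infinite product (which I would justify via the polynomial bound on $\varrho_r$ from Proposition \ref{prop:irrep_partial_sum_asymptotics}) and the legitimacy of rearranging the sum over representations into a product over $\k$ (Tonelli, since all summands are nonnegative); both are routine but deserve a one-line mention.
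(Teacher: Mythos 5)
The paper explicitly states that the proof of this proposition ``is elementary and is omitted,'' so there is no paper proof to compare against; your job was simply to supply the elementary argument, and you have done so correctly. Your factorization $Q_q(\rho) = \prod_{\k} q^{a(\k) X_{\k}(\rho)}(1-q^{a(\k)})$ is the right starting point, the Tonelli interchange for (1) is the expected justification, (2) reads off immediately from the product structure, and (3) is the one-line conditional probability computation. One minor remark: for the convergence of $\prod_{\k}(1-q^{a(\k)})$ you invoke Proposition \ref{prop:irrep_partial_sum_asymptotics}, which works, but the weaker and more self-contained observation that $a(\k) \geq \frac{1}{c_r}\prod_j k_j \geq \frac{1}{c_r}\max_j k_j$ already gives $\sum_{\k} q^{a(\k)} < \infty$ without appealing to the asymptotic count of irreducibles; this avoids a small circularity concern, since the infinite-product generating-function identity used in (1) is itself stated in the introduction as an unproved fact whose meaningfulness presupposes this convergence.
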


\begin{rem}
 By Proposition \ref{P:BoltzmannSimpleProperties} the probability space of finite-dimensional representations is equivalent to the space generated by sequences $(X_{\k})_{\k\in \mathbb{N}^r}$ of independent random variables with densities as above and such that all but finitely many of the $X_{\k}$ are zero.  In fact, the two are still equivalent without the assumption that finitely many $X_{\k}$ are non-zero.  This follows because the probability of the latter event is 0; that is, for any $q\in (0,1)$ and any infinite set $\{\k_j\}_{j \geq 1} \subset \N^r$, we have
 $$
Q_q(X_{\k_j}>0 \ \text{for} \ j \geq 1)=0.
$$
The proof of this fact is a straightforward application of the Borel--Cantelli Lemma.
\end{rem}

Let $\mathrm{E}_q$ and $\mathrm{Var}_q$ denote the expectation and variance operators under $Q_q$.  We choose $q=q_n$ to satisfy the saddle-point equation, which we write below in two equivalent forms.
\begin{prop}\label{P:SaddlePointexist}
For $n \in \mathbb{N}$, there exists a unique $q_n \in (0,1)$ satisfying the equivalent equations:
\begin{equation*}
Q_{q_n}(\dim=n)=\sup_{q \in (0,1)} Q_{q}(\dim=n)
\quad \Leftrightarrow \quad
\mathrm{E}_{q_n}(\dim)=n.
\end{equation*}
\end{prop}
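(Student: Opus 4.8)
The plan is to establish both the equivalence of the two characterizations and the existence/uniqueness of $q_n$ by analyzing the function $q \mapsto \mathrm{E}_q(\dim)$. First I would write out $\mathrm{E}_q(\dim)$ explicitly: since the $X_{\k}$ are independent under $Q_q$ with $X_{\k}$ geometrically distributed (Proposition \ref{P:BoltzmannSimpleProperties}(2)), we have
\begin{equation*}
\mathrm{E}_q(\dim) = \sum_{\k \in \N^r} a(\k) \, \mathrm{E}_q(X_{\k}) = \sum_{\k \in \N^r} \frac{a(\k) q^{a(\k)}}{1 - q^{a(\k)}}.
\end{equation*}
This is a (locally uniformly) convergent sum of smooth, strictly increasing functions of $q$ on $(0,1)$, so $\mathrm{E}_q(\dim)$ is itself smooth and strictly increasing on $(0,1)$. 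As $q \to 0^+$ every summand tends to $0$, so $\mathrm{E}_q(\dim) \to 0$; as $q \to 1^-$ the $\k = \bm{1}$ term alone (with $a(\bm{1}) = 1$) behaves like $q/(1-q) \to \infty$, so $\mathrm{E}_q(\dim) \to \infty$. By the intermediate value theorem and strict monotonicity, for each $n \in \N$ there is a unique $q_n \in (0,1)$ with $\mathrm{E}_{q_n}(\dim) = n$.

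Next I would connect this to the maximization of $Q_q(\dim = n)$. Writing $P(q) := Q_q(\dim = n) = [q^n]\, Z(q)^{-1} \cdot$ — more precisely, using that $Q_q(\dim = n) = p_r(n) q^n \prod_{\k}(1 - q^{a(\k)})$, take logarithmic derivative in $q$:
\begin{equation*}
\frac{d}{dq} \log Q_q(\dim = n) = \frac{n}{q} - \sum_{\k \in \N^r} \frac{a(\k) q^{a(\k)-1}}{1 - q^{a(\k)}} = \frac{1}{q}\bigl(n - \mathrm{E}_q(\dim)\bigr).
\end{equation*}
Hence the critical points of $Q_q(\dim = n)$ in $(0,1)$ are exactly the solutions of $\mathrm{E}_q(\dim) = n$, and since $\mathrm{E}_q(\dim)$ is strictly increasing the derivative above changes sign from $+$ to $-$ at $q_n$; so $q_n$ is the unique global maximizer on $(0,1)$. (One should also note $Q_q(\dim = n) \to 0$ as $q \to 0^+$ since it is $O(q^n)$, and one can check it does not escape to the boundary $q=1$, so the supremum is attained in the interior.) This gives the stated equivalence and simultaneously the existence and uniqueness.

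The main obstacle — really the only non-routine point — is justifying term-by-term differentiation and the limiting behavior of the infinite sum/product, i.e., locally uniform convergence on $(0,1)$ of $\sum_{\k} a(\k) q^{a(\k)}/(1-q^{a(\k)})$. This follows from Proposition \ref{prop:irrep_partial_sum_asymptotics} (or just the crude bound $R_r(x) \ll_r x^{2/(r+1)}$): grouping terms by the value $m = a(\k)$, the sum is $\sum_{m \geq 1} \varrho_r(m)\, m q^m/(1-q^m)$, and on any compact subinterval $[\,\delta, 1-\delta\,]$ the tail $\sum_{m > M} \varrho_r(m) m q^m/(1-q^m) \ll_{r,\delta} \sum_{m>M} m\, (\text{something})\, q^m$ with $\varrho_r(m)$ of polynomial growth, which is uniformly small. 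The same estimate shows the infinite product $\prod_{\k}(1-q^{a(\k)})$ converges to a positive smooth function on $(0,1)$, legitimizing all the manipulations above. Everything else is elementary calculus.
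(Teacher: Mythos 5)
Your proof is correct, and it takes a genuinely different route from the paper's. The paper observes that $q \mapsto q^{-n}\prod_{\k}(1-q^{a(\k)})^{-1}$ is a power series in $q$ with nonnegative coefficients shifted by $q^{-n}$, hence (strictly) convex on $(0,1)$ by a standard fact from analytic combinatorics \cite[p.~550, VIII.4]{FS}; since this function diverges as $q\to 0^+$ and $q\to 1^-$, it has a unique interior minimizer $q_n$, and the logarithmic derivative then yields $\mathrm{E}_{q_n}(\dim)=n$. You instead prove directly that $\mathrm{E}_q(\dim)=\sum_{\k} a(\k)q^{a(\k)}/(1-q^{a(\k)})$ is a strictly increasing continuous bijection from $(0,1)$ onto $(0,\infty)$ (using that each summand is increasing, that the $\k=\bm 1$ term forces divergence at $q=1^-$, and that $R_r$ has polynomial growth to justify locally uniform convergence and term-by-term differentiation), apply the intermediate value theorem to get a unique $q_n$ with $\mathrm{E}_{q_n}(\dim)=n$, and then deduce from the sign change of $\tfrac{d}{dq}\log Q_q(\dim=n)=\tfrac{1}{q}(n-\mathrm{E}_q(\dim))$ that this $q_n$ is also the unique maximizer. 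The two arguments have the same logarithmic-derivative computation at their core; the paper's gains brevity by delegating the monotonicity/uniqueness to a convexity lemma, whereas yours is more self-contained and makes the strict monotonicity of $\mathrm{E}_q(\dim)$ explicit (which is in any case needed later). One minor point: your remark that ``one can check it does not escape to the boundary $q=1$'' is in fact immediate from your own sign analysis of the logarithmic derivative (increasing on $(0,q_n)$, decreasing on $(q_n,1)$), so no separate boundary check is required.
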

\begin{proof}
By definition of the Boltzmann model, we have
$$
Q_{q_n}(\dim=n)=\! \sup_{q \in (0,1)} \! Q_{q}(\dim=n) 
\ \  \Leftrightarrow \ \ 
q_n^{-n}\prod_{\k \in \N^r} \left(1-q_n^{a(\k)}\right)^{-1}=\inf_{q \in (0,1)}\! q^{-n}\prod_{\k \in \N^r} \left(1-q^{a(\k)}\right)^{-1}.
$$
The $q$-series $\prod_{\k \in \N^r} (1-q^{a(\k)})^{-1}$ has non-negative coefficients, so $q \mapsto q^{-n}\prod_{\k \in \N^r} (1-q^{a(\k)})^{-1}$ is convex for $q \in (0,1)$ by \cite[p. 550, VIII.4] {FS}, and tends to $\infty$ as $q \to 0^+$ and as $q \to 1^-$, so has a unique minimum, at $q_n$ say.  A straightfoward calculation, taking the logarithmic derivative of the product and noting that
$$
\mathrm{E}_{q_n}(\dim) = \sum_{\k \in \N^r} \frac{a(\k)q_n^{a(\k)}}{1-q_n^{a(\k)}},
$$
gives the claimed equivalent forms of the above equation in the proposition.
\end{proof}

Next we give asymptotic estimates for the saddle point $q_n$ and the variance
\begin{equation*}
\sigma_n^2:=\mathrm{Var}_{q_n}(\dim)
= \sum_{\k\in \mathbb{N}^r}\!\frac{a(\k)^2q_n^{a(\k)}}{\left(1-q_n^{a(\k)}\right)^{\!2}} .
\end{equation*}

\begin{prop}\label{P:SaddlePointAsympGrowth}
    Let $q_n$ be as in Proposition \ref{P:SaddlePointexist}, define $s_n \in \IR_{>0}$ by $q_n=:\exp(-s_n^{\frac{r(r+1)}{2}})$, and let
    $$
    \mathcal{C}_r:=\left(\int_{[0,\infty)^r} \frac{a(\bm{t})e^{-a(\bm{t})}}{1-e^{-a(\bm{t})}} \bm{dt} \right)^{\frac{2}{r(r+3)}}, \quad \mathcal{C}_{r,\mathrm{var}}:=\int_{[0,\infty)^r} \frac{a(\bm{t})^2e^{-a(\bm{t})}}{(1-e^{-a(\bm{t})})^2}  \bm{dt}, \quad \mathcal{D}_{r,\mathrm{var}} := \frac{\mathcal{C}_{r,\mathrm{var}}}{\mathcal{C}_r^{r(r+2)}}.
$$
Then the involved integrals converge and we have the following asymptotic estimates:
\begin{enumerate}[label=\rm(\arabic*),leftmargin=*]
\item For the saddle-point, or equivalently the energy--temperature relation for the Boltzmann model,
\begin{equation*}
s_n = \mathcal{C}_r n^{-\frac{2}{r(r+3)}}
    \lp 1 + O \!\lp n^{-\frac{2}{r^2(r+3)}} \rp \rp .
\end{equation*}

\item For the variance $\sigma_n^2$, we have
\begin{equation*}
\sigma_n^2 = \mathcal{C}_{r,\mathrm{var}}s_n^{-r(r+2)}
    \lp 1 + O \!\lp s_n^{\frac{1}{r}} \rp \rp = \mathcal{D}_{r,\mathrm{var}} n^{\frac{2(r+2)}{r+3}}
    \lp 1 + O \!\lp n^{-\frac{2}{r^2(r+3)}} \rp \rp .
\end{equation*}

\item We have the third moment estimate
\begin{equation*}
\sum_{\k\in \mathbb{N}^r}\frac{a(\k)^3q_n^{a(\k)}}{\left(1-q_n^{a(\k)}\right)^3}
=O\!\left(s_n^{-\frac{r(3r+5)}{2}} \right) .
\end{equation*}
\end{enumerate}
\end{prop}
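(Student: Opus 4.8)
The plan is to evaluate each of the three Boltzmann sums over $\bm{k} \in \IN^r$ by comparing them to the corresponding integrals over $[0,\infty)^r$, then use the defining relations $\mathrm{E}_{q_n}(\dim)=n$ (from Proposition \ref{P:SaddlePointexist}) and $q_n = \exp(-s_n^{\nu_r})$ with $\nu_r = \tfrac{r(r+1)}{2}$ to solve for $s_n$. The common mechanism is this: for a function $g\colon (0,\infty)\to\IR$ with sufficient decay, write $s := s_n$ and consider $\Sigma_g(s) := \sum_{\bm{k}\in\IN^r} g\!\lp s^{\nu_r} a(\bm{k})\rp$. Since $a(\bm{k})$ is homogeneous of degree $\nu_r$, we have $s^{\nu_r} a(\bm{k}) = a(s\bm{k})$, so $\Sigma_g(s)$ is a Riemann-type sum for $s^{-r}\int_{(0,\infty)^r} g(a(\bm{y}))\,\bm{dy}$ with mesh $s$ in each coordinate. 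The leading term is therefore $\mathcal{I}_g\, s^{-r}$ where $\mathcal{I}_g := \int_{[0,\infty)^r} g(a(\bm{y}))\,\bm{dy}$, provided this integral converges.

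\textbf{Step 1: Convergence of the integrals.} I would first check that $\mathcal{C}_r$, $\mathcal{C}_{r,\mathrm{var}}$ and the integral $\int g_3(a(\bm{y}))\,\bm{dy}$ with $g_3(x)=\tfrac{x^3 e^{-x}}{(1-e^{-x})^3}$ all converge. Near the origin of the integration region (i.e.\ where $a(\bm{y})\to 0$, which happens whenever some $y_a\to 0$), one has $g(a(\bm{y})) \sim a(\bm{y})^{-1}$ for $g(x)=\tfrac{xe^{-x}}{1-e^{-x}}$, $\sim a(\bm{y})^{0}$ for the variance integrand $\tfrac{x^2e^{-x}}{(1-e^{-x})^2}$, and $\sim a(\bm{y})^{0}$ for $g_3$; convergence there follows from exactly the kind of estimate in Lemma \ref{lem:irrep_polynomial_integral} (the bound $I_{r,a,\ve}\ll_r \ve^{1/r}$ controls the measure of the region where $a(\bm{y})$ is small), after slicing by the size of $a(\bm{y})$. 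At infinity the exponential decay makes convergence immediate.

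\textbf{Step 2: Euler--Maclaurin / Riemann-sum error bound.} The main work is to show $\Sigma_g(s) = \mathcal{I}_g\, s^{-r} + O\!\lp s^{-r} \cdot s^{1/r}\rp$ (an error saving a factor $s^{1/r}$), or at least enough of this to get the stated error terms. Because $g(a(\bm{y}))$ is not globally smooth — it blows up mildly as $a(\bm{y})\to 0$ — I would split the sum/integral into the region $a(s\bm{k}) \geq \delta$ (for a fixed small $\delta$), where $g(a(\bm{y}))$ is smooth and one gets a clean multidimensional Euler--Maclaurin bound with error $O(s^{-r+1})$ from the boundary (which is even better than needed), and the region $a(s\bm{k}) < \delta$, where one crudely bounds both sum and integral using the measure estimate from Lemma \ref{lem:irrep_polynomial_integral}/Lemma \ref{lem:irrep_polynomial_lattice_counting} — the number of lattice points $\bm{k}$ with $a(s\bm{k})<\delta$ is $O\!\lp (\delta/s^{\nu_r})^{2/(r+1)} s^{0}\rp$ type, contributing $O(s^{-r} \delta^{1/r})$ after optimizing, which after choosing $\delta \asymp s$ gives the $s^{1/r}$ saving. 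This is the step I expect to be the main obstacle: getting a uniform handle on the sparse lattice-point count near the degenerate locus of $a$, and tracking how the mild singularity of $g$ interacts with it. This is presumably why Lemma \ref{lem:irrep_polynomial_lattice_counting} was proved.

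\textbf{Step 3: Solve the saddle-point equation and assemble.} Applying Step 2 to $g(x)=\tfrac{xe^{-x}}{1-e^{-x}}$ gives $n = \mathrm{E}_{q_n}(\dim) = \Sigma_g(s_n) = \mathcal{C}_r^{r(r+3)/2}\, s_n^{-r}(1+O(s_n^{1/r}))$, since $\mathcal{I}_g = \mathcal{C}_r^{r(r+3)/2}$ by definition of $\mathcal{C}_r$. Inverting, $s_n = \mathcal{C}_r\, n^{-2/(r(r+3))}(1+O(n^{-2/(r^2(r+3))}))$, which is part (1); here the error exponent $\tfrac{2}{r^2(r+3)}$ comes from $s_n^{1/r}$ expressed in terms of $n$ via the leading relation $s_n \asymp n^{-2/(r(r+3))}$. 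For part (2), apply Step 2 to the variance integrand to get $\sigma_n^2 = \mathcal{C}_{r,\mathrm{var}} s_n^{-r}\cdot s_n^{-r\nu_r\cdot ?}$ — more precisely, since the summand is $\tfrac{a(\bm{k})^2 q_n^{a(\bm{k})}}{(1-q_n^{a(\bm{k})})^2}$ and $a(\bm{k})^2 = s_n^{-2\nu_r} a(s_n\bm{k})^2$, one gets $\sigma_n^2 = s_n^{-2\nu_r}\Sigma_{h}(s_n)$ with $h(x)=\tfrac{x^2e^{-x}}{(1-e^{-x})^2}$, hence $\sigma_n^2 = \mathcal{C}_{r,\mathrm{var}}\, s_n^{-2\nu_r - r}(1+O(s_n^{1/r})) = \mathcal{C}_{r,\mathrm{var}}\, s_n^{-r(r+2)}(1+O(s_n^{1/r}))$ since $2\nu_r + r = r(r+2)$; converting to $n$ via part (1) and the definition of $\mathcal{D}_{r,\mathrm{var}}$ yields the second form. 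For part (3), the same computation with $g_3(x)=\tfrac{x^3e^{-x}}{(1-e^{-x})^3}$ gives $\sum \tfrac{a(\bm{k})^3 q_n^{a(\bm{k})}}{(1-q_n^{a(\bm{k})})^3} = s_n^{-3\nu_r}\Sigma_{g_3}(s_n) = O\!\lp s_n^{-3\nu_r - r}\rp = O\!\lp s_n^{-r(3r+5)/2}\rp$, using $3\nu_r + r = \tfrac{3r(r+1)}{2}+r = \tfrac{r(3r+5)}{2}$; for this one only needs the upper bound $\Sigma_{g_3}(s_n) = O(s_n^{-r})$, which is the easy half of Step 2 (comparison of the sum to the convergent integral, no error term needed).
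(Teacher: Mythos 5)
Your overall framework — compare the Boltzmann sums $\Sigma_g(s_n)$ to the integrals $\int g(a(\bm{y}))\,\bm{dy}$ via the homogeneity of $a$, then invert — is exactly what the paper does, and your exponent arithmetic in Step 3 is correct. The difference is in how Step 2 is executed, and there are also a couple of misstatements about the integrand that shape your Step 2 unnecessarily.

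First, a factual error: you claim that $g(x)=\tfrac{xe^{-x}}{1-e^{-x}}$ satisfies $g(a(\bm{y}))\sim a(\bm{y})^{-1}$ as $a(\bm{y})\to 0$ and later that $g(a(\bm{y}))$ ``blows up mildly.'' In fact $g(x)=\tfrac{x}{e^x-1}\to 1$ as $x\to 0^+$; there is no singularity at all, for $g$ or for the variance and third-moment integrands. The genuine issue for convergence of $\int_{[0,\infty)^r} g(a(\bm{y}))\,\bm{dy}$ is not a singularity of $g$ but the infinite measure of the corner regions of $[0,\infty)^r$ where some coordinate is small yet $a(\bm{y})$ stays bounded. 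Lemma \ref{lem:irrep_polynomial_integral} controls exactly that (the measure of $\{a(\bm{y})\le M\}$ scales like $M^{2/(r+1)}$), and the slicing over $k\le a(\bm{y})\le k+1$ in \eqref{eq:exp_rdim_integral_conv} is how the paper packages this. Your Step 1 intuition points in roughly the right place but attributes the difficulty to the wrong source.

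Second, for Step 2 you propose a two-region Euler--Maclaurin argument, splitting at $a(s\bm{k})\ge\delta$ and worrying about interaction of a nonexistent singularity with sparse lattice counts. The paper avoids Euler--Maclaurin entirely by a monotonicity sandwich: since $a(\bm{t})$ has nonnegative coefficients (so is coordinate-wise nondecreasing) and $t\mapsto\tfrac{te^{-t}}{1-e^{-t}}$ is decreasing, the composite $\bm{t}\mapsto g(a(\bm{t}))$ is coordinate-wise nonincreasing, and the one-line integral comparison
\[
\int_{[s_n,\infty)^r} g(a(\bm{t}))\,\bm{dt}\ \le\ \sum_{\k\in\N^r} g(a(s_n\k))\,s_n^r\ \le\ \int_{[0,\infty)^r} g(a(\bm{t}))\,\bm{dt}
\]
follows. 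The error is then just the measure (weighted by the bounded integrand) of the strip $\bigcup_a\{t_a\le s_n\}$, bounded by $\ll s_n^{1/r}$ via Lemma \ref{lem:irrep_polynomial_integral}. This sidesteps all derivative bounds — which, if you do try Euler--Maclaurin, are not as innocuous as they look: $\partial_{t_j}g(a(\bm{t}))$ involves $\partial_{t_j}a(\bm{t})$, a polynomial of degree $\nu_r-1$ that can be large at points far from the origin with $a(\bm{t})\asymp 1$. The same monotonicity observation applies verbatim to the variance and third-moment integrands (one checks $x\mapsto\tfrac{x^2e^x}{(e^x-1)^2}$ and $x\mapsto\tfrac{x^3e^x}{(e^x-1)^3}$ are also decreasing on $(0,\infty)$), so parts (2) and (3) go through without any new ideas, which is why the paper calls them ``similar.'' Your Euler--Maclaurin plan could be made to work but is strictly harder; you should use the monotone sandwich instead and drop the (spurious) singularity split.
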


\begin{rem}
    Note that for $r=2$ we have
    $$- \log (q_n) = s_n^{3}
    = \mathcal{C}_2 n^{-\frac{3}{5}} \lp 1+ O \!\lp n^{-\frac{1}{10}} \rp \rp.
    $$
	This is consistent with \cite[Lemma 8.1]{Romik}, which states that $- \log (q_n) = a_1  n^{-\frac{3}{5}} - a_2  n^{-\frac{7}{10}} + ...\ $.
\end{rem}

\begin{proof}[Proof of Proposition \ref{P:SaddlePointAsympGrowth}]
    Using the characterization of $q_n \in (0,1)$ in Proposition \ref{P:SaddlePointexist} and the fact that $ a(\bm{t})$ is a degree $\frac{r(r+1)}{2}$ homogeneous polynomial, we have
    $$
    n=\sum_{\k \in \N^r} \frac{a(\k)q_n^{a(\k)}}{1-q_n^{a(\k)}}=s_n^{-\frac{r(r+3)}{2}}\sum_{\k \in \N^r} \frac{a(s_n\k)e^{-a(s_n\k)}}{1-e^{-a(s_n\k)}}s_n^r.
    $$
    The above implies that $\lim_{n \to \infty} q_n = 1$ and thus  $\lim_{n \to \infty} s_n = 0$.
The function $t \mapsto \frac{te^{-t}}{1-e^{-t}}$ is positive for $t>0$, is $O(1)$ as $t \to 0^+$, and has exponential decay as $t \to \infty$. So by Lemma \ref{lem:irrep_polynomial_integral} we have
\begin{equation}\label{eq:exp_rdim_integral_conv}
\int_{[0,\infty)^r} \frac{a(\bm{t})e^{-a(\bm{t})}}{1-e^{-a(\bm{t})}} \bm{dt}
\leq
\sum_{k\ge0}
\sup_{t \in [k,k+1]} \!\!\lp \frac{te^{-t}}{1-e^{-t}} \rp
\int_{\substack{\bm{y} \in \IR_{> 0}^r \\ k \leq a(\bm{y}) \leq k+1}} \bm{dy}
< \infty
\end{equation}
and the leftmost integral converges.
Since $a(\bm{t})$ has positive coefficients and $t \mapsto \frac{te^{-t}}{1-e^{-t}}$ is decreasing, the integral comparison criterion gives
$$
\int_{[s_n,\infty)^r} \frac{a(\bm{t})e^{-a(\bm{t})}}{1-e^{-a(\bm{t})}} \bm{dt} \leq \sum_{\k \in \N^r} \frac{a(s_n\k)e^{-a(s_n\k)}}{1-e^{-a(s_n\k)}}s_n^r \leq \int_{[0,\infty)^r} \frac{a(\bm{t})e^{-a(\bm{t})}}{1-e^{-a(\bm{t})}} \bm{dt}
$$
and hence
\begin{equation*}
0 \leq \int_{[0,\infty)^r} \frac{a(\bm{t})e^{-a(\bm{t})}}{1-e^{-a(\bm{t})}} \bm{dt} -
\sum_{\k \in \N^r} \frac{a(s_n\k)e^{-a(s_n\k)}}{1-e^{-a(s_n\k)}}s_n^r
\leq
\sum_{a=1}^r
\int_{\substack{\bm{t} \in \IR_{\geq 0}^r \\ t_a \leq s_n}}
\frac{a(\bm{t})e^{-a(\bm{t})}}{1-e^{-a(\bm{t})}} \bm{dt}
\ll s_n^{\frac{1}{r}}.
\end{equation*}
Here the last bound again follows from Lemma \ref{lem:irrep_polynomial_integral} and dividing the integration region to the $k \leq a(\bm{y}) \leq k+1$ regions as in \eqref{eq:exp_rdim_integral_conv}. Thus,
$$
n = s_n^{-\frac{r(r+3)}{2}}
\lp
\int_{[0,\infty)^r} \frac{a(\bm{t})e^{-a(\bm{t})}}{1-e^{-a(\bm{t})}} \bm{dt}
+ O \!\lp s_n^{\frac{1}{r}} \rp \rp
$$
and part (1) of the proposition follows.
The proofs of parts (2) and (3) are similar.
\end{proof}

\subsection{Equivalence of ensembles}
For probability measures $\mu_1,\mu_2$ on $\IN_0^d$ with $d \in \IN \cup \{\infty\}$
and events from the power set,\footnote{
Here $\IN_0^\infty$ denotes the set of sequences in $\IN_0$, where all but finitely many entries are zero.
}
define the {\it total variation metric}
\begin{equation*}
d_{\mathrm{TV}}(\mu_1,\mu_2):=\sup_{B \subset \IN_0^d} |\mu_1(B)-\mu_2(B)|.
\end{equation*}
In order for distributions under the Boltzmann model to coincide with those under $P_n$, we use the following transfer principle.

\begin{prop}\label{P:EquivalenceofEnsembles}
Let $\bm{X}_{I_n}:=\left(X_{\k}\right)_{\k \in I_n}$, where
$I_n \subset \mathbb{N}^r$ is such that
\begin{equation}\label{E:BoldXVarSmall}
\mathrm{Var}_{q_n}\!\left(\sum_{\k\in I_n} a(\k)X_{\k}\right) =
o\!\left(\sigma_n^2\right).
\end{equation}
Then
the probability measures
$Q_{q_n}\!\!\left(\bm{X}_{I_n}^{-1}\right)$
and $P_n\!\left(\bm{X}_{I_n}^{-1}\right)$ on $\IN_0^{d_n}$ with $d_n := |I_n|$ satisfy
\begin{equation}\label{eq:total_variation_pn_qn}
\lim_{n \to \infty} d_{\mathrm{TV}}\!\left(Q_{q_n}\!\!\left(\bm{X}_{I_n}^{-1}\right),
P_n\!\left(\bm{X}_{I_n}^{-1}\right)\right)=0.
\end{equation}
Moreover, we have
\begin{equation}\label{eq:Qqn_prob_dimension}
Q_{q_n}(\dim=n)\sim \frac{1}{\sqrt{2\pi \sigma_n^2} }.
\end{equation}
\end{prop}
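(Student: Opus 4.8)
The plan is to run a local central limit theorem argument via the Circle Method, exactly the scheme that makes ``equivalence of ensembles'' work. Write $Z_n := \dim = \sum_{\k} a(\k) X_{\k}$ and split the index set as $\N^r = I_n \sqcup J_n$, with $Y_n := \sum_{\k \in J_n} a(\k) X_{\k}$ and $W_n := \sum_{\k \in I_n} a(\k) X_{\k}$, so $Z_n = W_n + Y_n$ with $W_n, Y_n$ independent under $Q_{q_n}$. The key identity is that conditioning on $\dim = n$ is the uniform measure (Proposition \ref{P:BoltzmannSimpleProperties}(3)), so for a bounded test function $g$ of $\bm X_{I_n}$,
\begin{equation*}
\mathrm{E}_{P_n}\!\big(g(\bm X_{I_n})\big) = \frac{\mathrm{E}_{q_n}\!\big(g(\bm X_{I_n}) \, \mathbbm{1}[Z_n = n]\big)}{Q_{q_n}(Z_n = n)} = \frac{\sum_{m} \mathrm{E}_{q_n}\!\big(g(\bm X_{I_n}) \, \mathbbm{1}[W_n = m]\big) \, Q_{q_n}(Y_n = n-m)}{Q_{q_n}(Z_n = n)}.
\end{equation*}
Comparing this to $\mathrm{E}_{q_n}(g(\bm X_{I_n})) = \sum_m \mathrm{E}_{q_n}(g(\bm X_{I_n})\mathbbm{1}[W_n=m])$, the total variation distance in \eqref{eq:total_variation_pn_qn} will be controlled once we show that $Q_{q_n}(Y_n = n - m) / Q_{q_n}(Z_n = n) \to 1$ uniformly for $m$ in the range where $W_n$ has non-negligible mass, i.e.\ $|m| \ll \sqrt{\mathrm{Var}_{q_n}(W_n)}$, which by hypothesis \eqref{E:BoldXVarSmall} is $o(\sigma_n)$. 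Concretely, I would bound $d_{\mathrm{TV}} \leq Q_{q_n}(|W_n| > \la_n) + \sup_{|m| \leq \la_n} |Q_{q_n}(Y_n = n-m)/Q_{q_n}(Z_n=n) - 1|$ for a suitable $\la_n$ with $\la_n / \sigma_n \to 0$ but $\la_n / \sqrt{\mathrm{Var}_{q_n}(W_n)} \to \infty$ (Chebyshev kills the first term), and then reduce everything to a local limit statement for $Y_n$ and for $Z_n$.

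For the local limit estimates, both $Q_{q_n}(Z_n = n)$ and $Q_{q_n}(Y_n = n-m)$ are extracted by the integral
\begin{equation*}
Q_{q_n}(Z_n = n) = \int_{-\frac12}^{\frac12} \Big( \prod_{\k \in \N^r} \frac{1 - q_n^{a(\k)}}{1 - q_n^{a(\k)} e^{2\pi i \th a(\k)}} \Big) e^{-2\pi i \th n} \, d\th,
\end{equation*}
and similarly for $Y_n$ with the product restricted to $\k \in J_n$. The major arc is $|\th| \leq \delta_n$ for a threshold $\delta_n$ that one tunes (something like $\th \asymp s_n^{\nu_r}$ times a slowly growing factor); there one Taylor-expands $\log$ of the integrand, getting the Gaussian $\exp(-2\pi^2 \th^2 \sigma_n^2 + \dots)$ with the cubic error controlled by Proposition \ref{P:SaddlePointAsympGrowth}(3), and the saddle-point choice $\mathrm{E}_{q_n}(\dim) = n$ kills the linear term; this yields \eqref{eq:Qqn_prob_dimension} and the matching asymptotic for $Y_n$ (whose variance and mean differ from those of $Z_n$ by $o(\sigma_n^2)$ and $O(\la_n)$ respectively, hence the ratio $\to 1$). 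The minor arc $\delta_n < |\th| \leq \frac12$ is where the real work is: one must show $\prod_{\k} |1 - q_n^{a(\k)} e^{2\pi i\th a(\k)}|^{-1}$ is exponentially smaller than the major-arc main term. Using $|1 - q_n^{a} e^{2\pi i \th a}|^{-1} \leq \exp(-c \, q_n^{a} \sin^2(\pi \th a))$ type bounds, this reduces to a lower bound on $\sum_{\k} q_n^{a(\k)} \sin^2(\pi \th a(\k))$, and here one dyadically decomposes the $\k$-space into boxes $\LL_{N,r}$ (rescaled) with $N \asymp s_n^{-1}$ so that $q_n^{a(\k)}$ is bounded below on each box, then invokes Corollary \ref{cor:sinsquare_sum_lower_bound} to get $\gg N^r$ indices with $\sin^2(\pi\th a(\k))$ bounded away from $0$ — this is precisely why the Weyl-differencing lower bounds of Section \ref{S:MinorArcs} were set up.

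The main obstacle is the minor-arc bound: one needs it to hold uniformly over the full range $\delta_n < |\th| \leq \frac12$ and for the \emph{restricted} product over $J_n$ as well as the full product, and the quality must beat the major-arc contribution $\asymp \sigma_n^{-1} \asymp s_n^{r(r+2)/2}$, which only decays polynomially in $s_n^{-1}$ — so the minor-arc saving, coming from $\sum q_n^{a(\k)}\sin^2(\pi\th a(\k)) \gg s_n^{-\nu_r}$ on the relevant scale, must be genuinely exponential in a positive power of $s_n^{-1}$. Matching the scale of $\th$ in Corollary \ref{cor:sinsquare_sum_lower_bound} (which wants $\ve N^{-\nu_r} \leq \th \leq \frac12$) with the major/minor arc split, and checking that discarding $I_n$ from the product does not destroy enough boxes to lose the bound (it removes only a set of $\k$ with $o(\sigma_n^2)$ weighted variance, hence asymptotically negligibly many boxes), is the delicate bookkeeping. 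Everything else — Chebyshev, Taylor expansion on the major arc, the ratio $\to 1$ — is routine given Proposition \ref{P:SaddlePointAsympGrowth}.
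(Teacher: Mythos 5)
Your proposal is correct and follows essentially the same route as the paper. The paper's reduction, adapted from Fristedt's Lemma 4.2, is to exhibit sets $B_n$ on which $Q_{q_n}(\dim=n\mid\bm X_{I_n}=\bm x)/Q_{q_n}(\dim=n)\to 1$ uniformly; since $Q_{q_n}(\dim=n\mid\bm X_{I_n}=\bm x)=Q_{q_n}(Y_n=n-x_{\Ssm})$ with $x_{\Ssm}=\sum_{\k\in I_n}a(\k)x_{\k}$, this is exactly your ratio $Q_{q_n}(Y_n=n-m)/Q_{q_n}(Z_n=n)$ restricted to the Chebyshev window, so the two formulations are identical in content. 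Your Circle Method breakdown (Taylor expansion and third-moment control on the major arc, exponential saving on the minor arc from the box decomposition and Corollary \ref{cor:sinsquare_sum_lower_bound}, with the observation that excluding $I_n$ removes only $o(n^{2/(r+3)})$ lattice points per box so the lower bound survives) is precisely how the paper proceeds. The only slip worth flagging: your Chebyshev truncation should be on $|W_n-\mathrm{E}_{q_n}(W_n)|$, not $|W_n|$, since $\mathrm{E}_{q_n}(W_n)$ need not be small even when $\mathrm{Var}_{q_n}(W_n)=o(\sigma_n^2)$; the paper's $B_n$ is defined by $|x_{\Ssm}-\mathrm{E}_{q_n}(W_n)|\le c_n$ accordingly.
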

\begin{proof}
Following \cite[Lemma 4.2]{Fristedt}, to prove \eqref{eq:total_variation_pn_qn} it suffices to find a sequence of sets $B_n \!\!\subset\! \IN_0^{d_n}$ with
\begin{equation}\label{E:XinBnAS}
\lim_{n \to \infty} Q_{q_n}(\bm{X}_{I_n}\in B_n) =1,
\end{equation}
and, uniformly for $\bm{x}_n\in B_n$,
\begin{equation}\label{E:QuotientUniformto1}
\lim_{n \to \infty} \frac{Q_{q_n}(\dim=n | \bm{X}_{I_n}=\bm{x}_n)}{Q_{q_n}(\dim=n)} =1,
\end{equation}
because for any $B \subset \IN_0^{d_n}$ one has the bound 
\begin{multline*}
| Q_{q_n}(\bm{X}_{I_n} \in B)-P_n(\bm{X}_{I_n} \in B) |
\\
\leq
Q_{q_n}(\bm{X}_{I_n} \in B_n^c)+\sum_{\bm{x} \in B_n} Q_{q_n}\!\left(\bm{X}_{I_n}=\bm{x}\right) \left| 1- \frac{Q_{q_n}(\dim = n | \bm{X}_{I_n} =\bm{x})}{Q_{q_n}(\dim=n)}\right|.
\end{multline*}
Our goal is to show that these two properties hold with the sets
\begin{equation*}
B_n:=\left\{(x_{\k})_{\k \in I_n} \in \IN_0^{d_n}: \left|E_{q_n}\!\left(\sum_{\k \in I_n} a(\k)X_{\k}\right)-\sum_{\k \in I_n} a(\k)x_{\k} \right| \leq c_n\right\},
\end{equation*}
where $c_n$ is an additional sequence of positive numbers chosen, using equation \eqref{E:BoldXVarSmall}, to satisfy
\begin{equation}\label{E:BoldXVarSmall2}
\s_n n^{-\frac{1}{3(r+3)}}  \leq
c_n=o\!\left(\sigma_n\right)
\ \mbox{ and } \
\mathrm{Var}_{q_n}\!\left(\sum_{\k \in I_n} a(\k)X_{\k}\right)=o\!\left(c_n^2\right).
\end{equation} Here, \eqref{E:XinBnAS} follows immediately from Chebyshev's inequality and \eqref{E:BoldXVarSmall2} as
$$
Q_{q_n}(\bm{X}_{I_n}\in B_n^c) \leq c_n^{-2} \, \mathrm{Var}_{q_n}\!\!\left(\sum_{\k \in I_n} a(\k)X_{\k}\right) = o(1).
$$
We  prove \eqref{E:QuotientUniformto1} by using the Circle Method, as in \cite{BB}.\footnote{Alternatively, it could be proved via Fourier inversion of characteristic functions as in \cite{Fristedt}, but the computations are more or less equivalent.} Beginning with the numerator, we rewrite\footnote{For ease of notation, we drop the dependence of $\bm{x}_n$ on $n$ until the very end of the proof.}
\begin{equation}\label{E:QNumerator1}
Q_{q_n}(\dim=n | \bm{X}_{I_n}=\bm{x})
=\frac{Q_{q_n}(\dim=n, \bm{X}_{I_n}=\bm{x})}{Q_{q_n}(\bm{X}_{I_n}=\bm{x})}
=\frac{Q_{q_n}(\dim=n,  \bm{X}_{I_n}=\bm{x})}{\prod_{\k \in I_n} q_n^{a(\k)x_{\k}}\left(1-q_n^{a(\k)}\right)},
\end{equation}
where in the last line we use Proposition \ref{P:BoltzmannSimpleProperties} (2).  Continuing with the numerator, we have
\begin{align}
Q_{q_n}(\dim=n,  \bm{X}_{I_n}=\bm{x})&=q_n^n\prod_{\k \in \N^r} \left(1-q_n^{a(\k)}\right) \#\left\{\rho : \dim(\rho)=n \mbox{ and } X_{\k}(\rho)=x_{\k} \ \text{for $\k \in I_n$}\right\}  \nonumber \\
&=q_n^n\prod_{\k \in \N^r} \left(1-q_n^{a(\k)}\right) \operatorname{coeff}_{\left[w^{n-x_\Ssm}\right]} \prod_{\k \in \N^r \setminus I_n} \left(1-w^{a(\k)}\right)^{-1} \label{E:QNumerator2},
\end{align}
where here and throughout we write $x_\Ssm :=\sum_{\k \in I_n} a(\k)x_{\k}$. Using \eqref{E:QNumerator1} and \eqref{E:QNumerator2}, a short calculation using Cauchy's integral formula with a circle of radius $q_n$ gives
\begin{equation}\label{eq:Qnumerator_integral_exp}
Q_{q_n}(\dim=n| \bm{X}_{I_n}=\bm{x})=\int_{-\frac{1}{2}}^{\frac{1}{2}} \exp\!\left(F_n(2\pi i \theta)\right) d\theta,
\end{equation}
where
\begin{equation*}
F_n(z):=-(n-x_\Ssm)z-\sum_{\k \in \N^r \setminus I_n}
\operatorname{Log}\left( \frac{1-q_n^{a(\k)}e^{a(\k)z}}{1-q_n^{a(\k)}} \right).
\end{equation*}
Note that $F_n (0) = 0$ and we have
\begin{equation*}
F'_n(0)
= x_\Ssm-\sum_{\k \in I_n} \frac{a(\k)q_n^{a(\k)}}{1-q_n^{a(\k)}}
= O(c_n)
\ \mbox{ and } \
F''_n(0) = \sigma_n^2-\mathrm{Var}_{q_n}\left(\sum_{\k \in I_n} a(\k)X_{\k}\right)
\sim \sigma_n^2 ,
\end{equation*}
where we use that $\bm{x}\in B_n$ for the first estimate and equation \eqref{E:BoldXVarSmall2} for the second.
We also define
\begin{equation*}
G_n (\th) := F_n(2\pi i \theta)-F'_n(0)2\pi i \theta - F''_n(0)\frac{(2\pi i \theta)^2}{2}
\end{equation*}
so that
\begin{equation*}
G_n (\th) =
\sum_{\k \in \N^r \setminus I_n} \lp
\operatorname{Log}\!\lp \frac{1-q_n^{a(\k)}}{1-q_n^{a(\k)}e^{2 \pi i \th a(\k)}} \rp
-  \frac{2 \pi i \th a(\k) q_n^{a(\k)}}{1-q_n^{a(\k)}}
+ \frac{(2 \pi \th a(\k))^2 q_n^{a(\k)}}{2 \left(1-q_n^{a(\k)}\right)^2}
\rp.
\end{equation*}
Then by \cite[Lemma 2.2]{BB} and Proposition \ref{P:SaddlePointAsympGrowth}, for $n\in\N$ and $\th \in \mathbb{R}$ we have
\begin{equation}\label{E:TaylorApprox}
\left|G_n(\th) \right| \ll \sum_{\k \in \N^r \setminus I_n} \frac{a(\k)^3 q_n^{a(\k)}}{\left(1-q_n^{a(\k)}\right)^3}|\theta|^3
\ll
n^{\frac{3r+5}{r+3}}|\theta|^3 .
\end{equation}
We want to use the above Taylor approximation on a major arc $|\theta|\leq \th_{0,n}$ with $\th_{0,n} := \varepsilon n^{-\frac{r+1}{r+3}}$ for appropriately small and fixed $0 <\varepsilon \leq \frac{1}{32}$ that we determine below.
We start with
\begin{equation}\label{eq:circle_method_major_arc}
\int_{-\th_{0,n}}^{\th_{0,n}} \exp\left(F_n(2\pi i \theta)\right) d\theta
=
\frac{1}{\b_n}
\int_{-\b_n \th_{0,n}}^{\b_n \th_{0,n}}
e^{- \pi \th^2 + 2 \pi i \eta_n \th + G_n \lp \frac{\th}{\b_n} \rp} d \th,
\end{equation}
where
\begin{equation*}
\eta_n := \frac{F'_n(0)}{\sqrt{2 \pi F''_n (0)}} = O \!\lp \frac{c_n}{\s_n} \rp
\quad \mbox{and} \quad
\b_n := \sqrt{2 \pi F''_n (0)} \sim \sqrt{2 \pi \s_n^2}.
\end{equation*}
We claim that the integral on the right-hand side of \eqref{eq:circle_method_major_arc} tends to one uniformly in $\bm{x}$.
Note that the $\bm{x}$-dependence here is only through the parameter $\eta_n$ (via $x_\Ssm$). To prove this, we first note that
for $\th \in [-\b_n \th_{0,n},\b_n \th_{0,n}]$ we can use the bound \eqref{E:TaylorApprox}
to find (for some constant $C>0$)
\begin{equation*}
\left| \exp \!\lp G_n \!\lp \frac{\th}{\b_n} \rp  \rp  \right|
\leq \exp \!\lp C n^{\frac{3r+5}{r+3}} \frac{|\th|^3}{\b_n^3}  \rp
\leq \exp \!\lp C n^{\frac{3r+5}{r+3}} \frac{\th_{0,n}}{\b_n^2} \th^2 \rp  .
\end{equation*}
According to Proposition \ref{P:SaddlePointAsympGrowth} we have
\begin{equation*}
C n^{\frac{3r+5}{r+3}} \frac{\th_{0,n}}{\b_n^2}
\sim \frac{C \varepsilon }{2 \pi \mathcal{D}_{r,\mathrm{var}}}
\mbox{ as } n \to \infty
\end{equation*}
and hence we can choose and fix $\varepsilon$ sufficiently small to satisfy
\begin{equation}\label{eq:circle_method_major_arc_Gn_bound}
\left| \exp \!\lp G_n \!\lp \frac{\th}{\b_n} \rp  \rp  \right|
\leq e^{\frac{\pi \th^2}{2}}
\quad \mbox{for } \th \in [-\b_n \th_{0,n},\b_n \th_{0,n}] .
\end{equation}
At this point we define \smash{$\g_n := \sqrt{\frac{\s_n}{c_n}}$}, which satisfies $\lim_{n \to \infty} \g_n = \infty$ by \eqref{E:BoldXVarSmall2}. Moreover, $\g_n \leq \b_n \th_{0,n}$ for $n$ sufficiently large,
because $\g_n \leq n^{\frac{1}{6(r+3)}}$ by \eqref{E:BoldXVarSmall2} and
$\b_n \th_{0,n} \asymp n^{\frac{1}{r+3}}$ by
Proposition~\ref{P:SaddlePointAsympGrowth}.
Restricting our attention to such $n$, we use the inequality \eqref{eq:circle_method_major_arc_Gn_bound} and bound
\begin{equation*}
\left| \int_{\g_n \leq |\th| \leq \b_n \th_{0,n}}
e^{- \pi \th^2 + 2 \pi i \eta_n \th + G_n \lp \frac{\th}{\b_n} \rp} d \th \right|
\leq 2 \int_{\g_n}^\infty e^{-\frac{\pi \th^2}{2}}  d \th .
\end{equation*}
The right-hand side (which is independent of $\bm{x}$) tends to zero because $\g_n\to\infty$. We also note that for $|\th| \leq \g_n$ we have
\begin{equation*}
|\eta_n \th| = O\!\lp \sqrt{\frac{c_n}{\s_n}} \rp = o(1)
\quad \mbox{so that }
\left|e^{2 \pi i \eta_n \th} - 1 \right| \ll \sqrt{\frac{c_n}{\s_n}} .
\end{equation*}
This yields the bound, again with the inequality \eqref{eq:circle_method_major_arc_Gn_bound},
\begin{equation*}
\left| \int_{|\th| \leq \g_n}
e^{- \pi \th^2 + G_n \lp \frac{\th}{\b_n} \rp} \lp e^{2\pi i \eta_n \th} - 1 \rp d \th \right|
\ll \sqrt{\frac{c_n}{\s_n}} \int_{\IR} e^{-\frac{\pi \th^2}{2}}  d \th = o(1).
\end{equation*}
Therefore, we have
\begin{equation*}
\int_{-\b_n \th_{0,n}}^{\b_n \th_{0,n}}
e^{- \pi \th^2 + 2 \pi i \eta_n \th + G_n \lp \frac{\th}{\b_n} \rp} d \th
=
\int_{-\g_n}^{\g_n}
e^{- \pi \th^2 + G_n \lp \frac{\th}{\b_n} \rp} d \th + o(1),
\end{equation*}
with the part dropped tending to zero uniformly in $\bm{x} \in B_n$.
For the remaining ($\bm{x}$-independent) integral, note that for $\th \in [-\g_n, \g_n]$ we have
\begin{equation*}
\left| G_n \lp \frac{\th}{\b_n} \rp \right|
\leq C n^{\frac{3r+5}{r+3}} \frac{\g_n^3}{\b_n^3} \ll
n^{\frac{3r+5}{r+3}} \frac{n^{\frac{1}{2r+6}}}{n^{\frac{3r+6}{r+3}}}
= n^{-\frac{1}{2r+6}}.
\end{equation*}
Since $\g_n$ tends to infinity, this then yields the pointwise limit, for $\theta \in \mathbb{R}$,
\begin{equation*}
\lim_{n \to \infty} \lp
e^{- \pi \th^2 + G_n \lp \frac{\th}{\b_n} \rp}
\chi_{[-\g_n,\g_n]} (\th)  \rp
= e^{- \pi \th^2}.
\end{equation*}
Since the integrand is also bounded by $e^{- \frac{\pi\th^2}{2} }$ thanks to \eqref{eq:circle_method_major_arc_Gn_bound}, we can use the Dominated Convergence Theorem
and find that the major arc contribution \eqref{eq:circle_method_major_arc} yields, uniformly in $\bm{x} \in B_n$,
\begin{equation}\label{eq:major_arc_contribution}
\lim_{n \to \infty} \lp \sqrt{2 \pi \s_n^2}
\int_{-\th_{0,n}}^{\th_{0,n}} \exp\left(F_n(2\pi i \theta)\right) d\theta  \rp  = 1.
\end{equation}

Next we discuss the contributions from the minor arc $\th_{0,n} \leq |\th| \leq \frac{1}{2}$. We start with the expression
\begin{equation*}
\left|\exp\left( F_n(2\pi i \theta)\right)\right| =\exp\left(- 2 \sum_{\substack{\k \in \N^r \setminus I_n \\ m \geq 1}}
\frac{q_n^{a(\k)m}}{m} \sin^2 \left(\pi a(\k)m\theta\right)  \right) ,
\end{equation*}
where the right-hand side is $\bm{x}$-independent.
First we show that $\# I_n$ is small in a certain sense, so that we may remove the exclusion of $I_n$ in the sum without paying a large price in appropriate windows for $\k$.
Let $N:= \lceil n^{\frac{2}{r(r+3)}} \rceil$ and let $\Lambda_{N,r}$ be as in Proposition \ref{prop:sinsquare_sum_lower_bound_general_r_case}.
For $\k \in \Lambda_{N,r}$, we have $k_j \asymp n^{\frac{2}{r(r+3)}} \asymp s_n^{-1}$ by Proposition \ref{P:SaddlePointAsympGrowth} so that
$a(s_n \k) \asymp 1$ in this range. Thus,
\begin{equation*}
\mathrm{Var}_{q_n}\!\!\left(\sum_{\k\in I_n} a(\k)X_{\k}\right)\!
\geq
\sum_{\k\in I_n \cap \Lambda_{N,r}} \frac{a(\k)^2q_n^{a(\k)}}{\left(1-q_n^{a(\k)}\right)^{\!2}}
\asymp s_n^{-r(r+1)} \#(I_n \cap \Lambda_{N,r}).
\end{equation*}
Then thanks to the condition \eqref{E:BoldXVarSmall} that we assume on $I_n$
and Proposition \ref{P:SaddlePointAsympGrowth}
we find
\begin{equation*}
\#(I_n \cap \Lambda_{N,r}) =o\!\left(n^{\frac{2}{r+3}}\right) .
\end{equation*}
This leads to the bound
\begin{equation*}
\sum_{\substack{\k \in \N^r \setminus I_n \\ m \geq 1}}
\frac{q_n^{a(\k)m}}{m} \sin^2 \left(\pi a(\k)m\theta\right)
\geq \!\!
\sum_{\k\in \Lambda_{N,r} \setminus I_n} \!\!\!\!
q_n^{a(\k)} \sin^2 \left(\pi a(\k)\theta\right)
=
\sum_{\k\in \Lambda_{N,r}} \!\! q_n^{a(\k)} \sin^2 \left(\pi a(\k)\theta\right) +
o\!\left(n^{\frac{2}{r+3}}\right) .
\end{equation*}
Using that $a(s_n \k) \asymp 1$ on $\LL_{N,r}$, we have
\begin{equation*}
\sum_{\k\in \Lambda_{N,r}} q_n^{a(\k)} \sin^2 \left(\pi a(\k)\theta\right)
\gg
\sum_{\k \in \Lambda_{N,r}} \sin^2 \left(\pi a(\k)\theta\right).
\end{equation*}
Since $\th_{0,n} = \ve n^{-\frac{r+1}{r+3}} \geq \ve N^{-\frac{r(r+1)}{2}}$, we can then use Corollary \ref{cor:sinsquare_sum_lower_bound} for $n$ sufficiently large to find
\begin{equation*}
\sum_{\k \in \Lambda_{N,r}} q_n^{a(\k)} \sin^2 \left(\pi a(\k)\theta\right)
\gg n^{\frac{2}{r+3}}.
\end{equation*}
So for $n$ sufficiently large we find (with some constant $D>0$ depending on $r$ and $\varepsilon$)
\begin{equation}\label{eq:minor_arc_contribution}
\left| \int_{\th_{0,n} \leq |\th| \leq \frac{1}{2}} \exp\left(F_n(2\pi i \theta)\right) d\theta \right|
\leq
\exp \!\lp -D n^{\frac{2}{r+3}} \rp
\end{equation}
and the minor arc contribution is exponentially small compared to the major arc contribution.

Inserting the major arc \eqref{eq:major_arc_contribution} and minor arc \eqref{eq:minor_arc_contribution} contributions in equation \eqref{eq:Qnumerator_integral_exp} we obtain
\begin{equation*}
\lim_{n \to \infty} \lp \sqrt{2 \pi \s_n^2} \,
Q_{q_n}(\dim=n| \bm{X}_{I_n}=\bm{x}_n)  \rp  = 1
\end{equation*}
uniformly in $\bm{x}_n \in B_n$.
Employing this result together with the case $I_n = \emptyset$ implies
the property \eqref{E:QuotientUniformto1}, which then finishes the proof of equation \eqref{eq:total_variation_pn_qn}. The case $I_n = \emptyset$ also establishes \eqref{eq:Qqn_prob_dimension}.
\end{proof}

\begin{rem}
Let $q_n$ be as in Proposition \ref{P:SaddlePointexist} and
$\mathcal{D}_{r,var}$ as in Proposition~\ref{P:SaddlePointAsympGrowth}.
Using Proposition \ref{P:EquivalenceofEnsembles} and recalling the definition of $Q_{q_n}(\dim =n)$, one obtains the following asymptotic for $p_r(n)$, the number of $n$-dimensional $\mathfrak{sl}_{r+1}(\mathbb{C})$-representations, in terms of $q_n$:
$$
p_r(n) \sim \frac{n^{-\frac{r+2}{r+3}}
}{\sqrt{2\pi \mathcal{D}_{r,\mathrm{var}}}}
q_n^{-n}
\prod_{\k \in \N^r} \left(1-q_n^{a(\k)}\right)^{-1}.
$$
If $q_n$ is computed to enough accuracy, then the above may be improved to an asymptotic formula that generalizes Romik's asymptotic formula for $r=2$ in \cite{Romik}.  As this would likely require a more careful study of the Witten zeta functions or more terms in the expansion in Proposition \ref{prop:irrep_partial_sum_asymptotics}, we leave this as an open problem.
\end{rem}

The following corollary to Proposition \ref{P:EquivalenceofEnsembles} gives simple criteria for equivalence of ensembles of the random variables we consider in subsequent sections.

\begin{cor}\label{C:EquivalenceofEnsembles}
Let $\bm{X}_{I_n}:=\left(X_{\bm{k}}\right)_{\k \in I_n}$, where we allow the index set $I_n$ to change with $n$.  Suppose that at least one of the following four conditions hold:
\begin{enumerate}[leftmargin=*,label=\rm(\arabic*)]
\item $I_n=\prod_{j=1}^r [k^{[1]}_j,k^{[2]}_j]$, where $s_nk_j^{[2]} \to 0$ for some $j$,
\item $I_n=\prod_{j=1}^r [k^{[1]}_j,k^{[2]}_j]$, where $s_nk_j^{[1]} \to \infty$ for some $j$,
\item $\sup_{\k \in I_n} a(s_n\k)=o(1)$ as $n \to \infty$,
\item $\inf_{\k \in I_n} a(s_n\k) \to \infty$ as $n \to \infty$.
\end{enumerate}
Then we have
$$
\lim_{n \to \infty} d_{\operatorname{TV}}\!\left(Q_{q_n} \!\left(\bm{X}_{I_n}^{-1}\right), P_n \!\left(\bm{X}^{-1}_{I_n}\right)\right) =0.
$$
\end{cor}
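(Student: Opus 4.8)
The plan is to deduce the corollary from Proposition~\ref{P:EquivalenceofEnsembles} by verifying, under each of the four conditions, its hypothesis \eqref{E:BoldXVarSmall}, i.e.\ $\mathrm{Var}_{q_n}(\sum_{\k\in I_n}a(\k)X_\k)=o(\sigma_n^2)$. By the independence and geometric distribution of the $X_\k$ (Proposition~\ref{P:BoltzmannSimpleProperties}(2)),
\[
\mathrm{Var}_{q_n}\!\Big(\sum_{\k\in I_n}a(\k)X_\k\Big)=\sum_{\k\in I_n}\frac{a(\k)^2q_n^{a(\k)}}{(1-q_n^{a(\k)})^2},
\]
and since $a$ is homogeneous of degree $\tfrac{r(r+1)}{2}$ and $q_n=e^{-s_n^{r(r+1)/2}}$, each summand equals $s_n^{-r(r+1)}g(a(s_n\k))$ with $g(s):=\frac{s^2e^{-s}}{(1-e^{-s})^2}=\left(\frac{s}{2\sinh(s/2)}\right)^2$. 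As $\sigma_n^2\asymp s_n^{-r(r+2)}$ by Proposition~\ref{P:SaddlePointAsympGrowth}(2), condition \eqref{E:BoldXVarSmall} becomes
\[
s_n^{r}\sum_{\k\in I_n}g(a(s_n\k))\longrightarrow 0 .
\]
I would first record three elementary facts: $g$ is nonincreasing on $[0,\infty)$ with $g(0^+)=1$; $g(s)\ll(1+s^2)e^{-s}$; and, since $a=h_r/c_r$ has nonnegative coefficients, $\bm{t}\mapsto g(a(\bm{t}))$ is nonincreasing in each coordinate with $\int_{[0,\infty)^r}g(a(\bm{t}))\,\bm{dt}=\mathcal{C}_{r,\mathrm{var}}<\infty$ (Proposition~\ref{P:SaddlePointAsympGrowth}).

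For conditions (1) and (2), where $I_n=\prod_{j=1}^r[k_j^{[1]},k_j^{[2]}]$, I would use the coordinatewise monotonicity of $g(a(\cdot))$ to pass to an integral: on the cube $\prod_j[s_n(k_j-1),s_nk_j)$ one has $g(a(s_n\k))\le g(a(\bm{t}))$, so summing over the disjoint cubes indexed by $\k\in I_n$ gives
\[
s_n^{r}\sum_{\k\in I_n}g(a(s_n\k))\le\int_{\prod_{j=1}^r[s_n(k_j^{[1]}-1),\,s_nk_j^{[2]})}g(a(\bm{t}))\,\bm{dt}.
\]
In case (1) this is bounded by $\int_{\{\bm{t}\ge 0:\,t_{j_0}<s_nk_{j_0}^{[2]}\}}g(a(\bm{t}))\,\bm{dt}$ for the index $j_0$ with $s_nk_{j_0}^{[2]}\to 0$, and in case (2) by $\int_{\{\bm{t}\ge 0:\,t_{j_0}\ge s_n(k_{j_0}^{[1]}-1)\}}g(a(\bm{t}))\,\bm{dt}$ for the index $j_0$ with $s_nk_{j_0}^{[1]}\to\infty$. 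Since $g(a(\bm{t}))\,\bm{dt}$ is a finite measure, the dominated convergence theorem forces both integrals to $0$; for (2) note one does not need $a(\bm{t})$ to be large on $\{t_{j_0}\ge R\}$, only that the indicator of this set tends to $0$ pointwise as $R\to\infty$.

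For conditions (3) and (4), where $I_n$ need not be a box, I would bound $\#I_n$ (or a tail of it) by lattice point counting. Under (3), with $\delta_n:=\sup_{\k\in I_n}a(s_n\k)=o(1)$, one has $I_n\subseteq\{\k\in\N^r:h_r(\k)\le c_r\delta_ns_n^{-r(r+1)/2}\}$, so Lemma~\ref{lem:irrep_polynomial_lattice_counting} and the homogeneity of $h_r$ give $s_n^r\#I_n\ll\delta_n^{2/(r+1)}+\delta_n^{2(r-1)/r^2}s_n^{1/r}\to 0$, and $g\le 1$ finishes it. Under (4), with $M_n:=\inf_{\k\in I_n}a(s_n\k)\to\infty$, one has $g(a(s_n\k))\ll a(s_n\k)^2e^{-a(s_n\k)}$ on $I_n$ for $n$ large; splitting $I_n$ into the layers $m\le a(s_n\k)<m+1$ with $m\ge\lfloor M_n\rfloor$ and using Lemma~\ref{lem:irrep_polynomial_lattice_counting} to bound $\#\{\k:a(s_n\k)<m+1\}\ll(m+1)^{2/(r+1)}s_n^{-r}$ yields $s_n^r\sum_{\k\in I_n}g(a(s_n\k))\ll\sum_{m\ge\lfloor M_n\rfloor}(m+1)^{2+2/(r+1)}e^{-m}\to 0$ as a tail of a convergent series. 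Once \eqref{E:BoldXVarSmall} is verified in all four cases, Proposition~\ref{P:EquivalenceofEnsembles} gives the conclusion.

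The main obstacle is conditions (1) and (2). Unlike (3) and (4), a box may have its coordinates at wildly different scales, so $a(s_n\k)$ is in general neither uniformly small nor uniformly large on $I_n$, and the naive estimate $\#I_n\le\prod_j(k_j^{[2]}+1)$ is hopelessly lossy; one genuinely has to exploit the product structure of $I_n$ together with the coordinatewise monotonicity of $g(a(\cdot))$ to carry out the sum-to-integral comparison in the correct direction, and then collapse the surviving integral using absolute continuity of the finite measure $g(a(\bm{t}))\,\bm{dt}$, whose finiteness $\mathcal{C}_{r,\mathrm{var}}<\infty$ is precisely the input supplied by Proposition~\ref{P:SaddlePointAsympGrowth}.
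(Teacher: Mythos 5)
Your proposal is correct and follows the same overall strategy as the paper: reduce to verifying the hypothesis \eqref{E:BoldXVarSmall} of Proposition~\ref{P:EquivalenceofEnsembles} and then estimate $s_n^r\sum_{\k\in I_n}g(a(s_n\k))$, where $g(s)=s^2e^{-s}/(1-e^{-s})^2$. The paper writes out only case (3), bounding the sum by $\int_{[0,\infty)^r}g(a(\bm{t}))\,\chi_{a(\bm{t})\le\sup_{\k\in I_n}a(s_n\k)}\,\bm{dt}$ (via the coordinatewise monotonicity of $g\circ a$) and invoking dominated convergence, remarking that the other cases are ``analogous.'' Your treatment of (1) and (2) is precisely that analogy made explicit, choosing the indicator $\chi_{t_{j_0}<s_nk_{j_0}^{[2]}}$ resp.\ $\chi_{t_{j_0}\ge s_n(k_{j_0}^{[1]}-1)}$ and applying DCT to the finite measure $g(a(\bm{t}))\,\bm{dt}$. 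Where you genuinely diverge is in (3) and (4), which you handle by lattice counting via Lemma~\ref{lem:irrep_polynomial_lattice_counting}: in (3) the crude bound $g\le 1$ suffices because $s_n^r\#I_n\to 0$, and in (4) you layer $I_n$ by the level sets $m\le a(s_n\k)<m+1$ and use the exponential decay of $g$ to sum a convergent tail. Both routes are valid; the lattice-count version is slightly more concrete (and avoids the small subtlety that in case (4) the natural ``lower-left'' cube $\prod_j[s_n(k_j-1),s_nk_j)$ can dip into the region where $a$ is small when some $k_j=1$, so the paper's indicator for (4) has to be chosen as the union of cubes rather than a sublevel-set indicator), while the paper's DCT version is more uniform across the four cases.
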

\begin{proof}
Using Proposition \ref{P:SaddlePointAsympGrowth} and the fact that $a(\bm{t})$ has positive coefficients, we have
$$
\s_n^{-2} \, \mathrm{Var}_{q_n}\!\!\left(\sum_{\k \in I_n} a(\k) X_{\k}\right)
\asymp
\sum_{\k \in I_n} \frac{a(s_n\k)^2q_n^{a(\k)}}{\left(1-q_n^{a(\k)}\right)^2}s_n^r \leq \int_{[0,\infty)^r} \frac{a(\bm{t})^2e^{-a(\bm{t})}}{\left(1-e^{-a(\bm{t})}\right)^2} \chi_{a(\bm{t}) \leq \sup_{\k\in I_n} \! a\left(s_n\k\right) \,} \bm{dt}.
$$
If (3) holds, then the integrand tends pointwise to zero.
The integrand is dominated by the same function without the indicator function, which is integrable by Proposition \ref{P:SaddlePointAsympGrowth}. So the integral tends to zero by the Dominated Convergence Theorem and the conclusion of the corollary follows from Proposition \ref{P:EquivalenceofEnsembles}.
The proofs for the other cases are analogous. 
\end{proof}

In practice, statistics of interest may depend on subsets $I_n$ that do not satisfy the above condition, and so Corollary \ref{C:EquivalenceofEnsembles} may not be readily applied.  In fact, this is true for several random variables in Table \ref{Table:slrstats}. In such cases, we use Corollary \ref{C:EquivalenceofEnsembles} and the following ``rare events lemma" in tandem.

\begin{lem}[Rare events lemma] \label{lem:ExpSmallPrinciple}
Suppose that an event $A_n$ is such that $s_n^{-\frac{r(r+2)}{2}} Q_{q_n}(A_n) = o(1)$ as $n \to \infty$. Then we have $\lim_{n \to \infty} P_n(A_n)=0$.
\end{lem}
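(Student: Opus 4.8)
The plan is to exploit the conditioning identity $P_n(\cdot) = Q_{q_n}(\cdot \mid \dim = n)$ from Proposition~\ref{P:BoltzmannSimpleProperties}(3), together with the sharp lower bound on $Q_{q_n}(\dim = n)$ coming from the ``case $I_n = \emptyset$'' of Proposition~\ref{P:EquivalenceofEnsembles}. Concretely, first I would write
\begin{equation*}
P_n(A_n) = Q_{q_n}(A_n \mid \dim = n) = \frac{Q_{q_n}(A_n \cap \{\dim = n\})}{Q_{q_n}(\dim = n)} \leq \frac{Q_{q_n}(A_n)}{Q_{q_n}(\dim = n)},
\end{equation*}
using only monotonicity of the measure $Q_{q_n}$ in the numerator.

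Next I would control the denominator from below. By equation~\eqref{eq:Qqn_prob_dimension} we have $Q_{q_n}(\dim = n) \sim (2\pi \sigma_n^2)^{-1/2}$, and by part~(2) of Proposition~\ref{P:SaddlePointAsympGrowth} we have $\sigma_n^2 = \mathcal{C}_{r,\mathrm{var}} s_n^{-r(r+2)}(1 + O(s_n^{1/r}))$, so that $\sigma_n \asymp s_n^{-r(r+2)/2}$. Combining these two facts gives $Q_{q_n}(\dim = n) \asymp s_n^{r(r+2)/2}$; in particular there is a constant $c_r > 0$ with $Q_{q_n}(\dim = n) \geq c_r\, s_n^{r(r+2)/2}$ for all large $n$.

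Substituting this bound into the first display yields
\begin{equation*}
P_n(A_n) \leq \frac{Q_{q_n}(A_n)}{c_r\, s_n^{r(r+2)/2}} = \frac{1}{c_r}\, s_n^{-r(r+2)/2}\, Q_{q_n}(A_n),
\end{equation*}
and the right-hand side is $o(1)$ precisely by the hypothesis $s_n^{-r(r+2)/2} Q_{q_n}(A_n) = o(1)$. This completes the argument. I do not anticipate a genuine obstacle here: the statement is essentially a bookkeeping consequence of the conditioning identity and the two already-established asymptotics, and the only mild point to be careful about is that $\sqrt{\sigma_n^2}$ scales like $s_n^{-r(r+2)/2}$, which is exactly the normalizing power appearing in the hypothesis — so the exponent in the lemma is the natural one and no slack is lost.
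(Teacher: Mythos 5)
Your proof is correct and follows exactly the same approach as the paper: condition on $\dim = n$ via Proposition~\ref{P:BoltzmannSimpleProperties}(3), drop the intersection to get the bound $P_n(A_n) \leq Q_{q_n}(A_n)/Q_{q_n}(\dim = n)$, and then use $Q_{q_n}(\dim = n) \asymp s_n^{r(r+2)/2}$ coming from \eqref{eq:Qqn_prob_dimension} and Proposition~\ref{P:SaddlePointAsympGrowth}(2). The paper compresses this into a single display but the content is identical.
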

\begin{rem}
Clearly the conclusion of the lemma follows in the case that the event $A_n$ has exponentially small probability under the Boltzmann model - i.e.,~$Q_{q_n}(A_n) \leq e^{-n^{\delta}}$ for some $\delta>0$ and $n$ sufficiently large.
\end{rem}
\begin{proof}[Proof of Lemma \ref{lem:ExpSmallPrinciple}]
We have
\begin{equation*}
P_n(A_n) =Q_{q_n}(A_n | \dim=n) =\frac{Q_{q_n}(A_n, \dim=n)}{Q_{q_n}(\dim=n)} \leq \frac{Q_{q_n}(A_n)}{Q_{q_n}(\dim=n)} \asymp s_n^{-\frac{r(r+2)}{2}} Q_{q_n}(A_n) ,
\end{equation*}
where the last part follows from Proposition \ref{P:SaddlePointAsympGrowth} and Proposition \ref{P:EquivalenceofEnsembles}.
\end{proof}

\section{Multiplicities of small irreducible representations}\label{S:SmallMultiplicities}
We start our discussion of the results from Table \ref{Table:slrstats} with the distribution of the multiplicities of irreducible representations of small dimension.  As in the case of partitions studied in \cite[Theorems 2.1 and 2.2]{Fristedt}, note that the joint distribution holds in a shorter range, compared to the distribution of a single multiplicity.

\begin{thm}\hspace{0cm} \label{T:Mult}
	\begin{enumerate}[leftmargin=*,label=\rm(\arabic*)]
		\item Suppose that $\bm{k}_n \!\in\! \mathbb{N}^r$ may depend on $n$ as long as $a(s_n\bm{k}_n)=o(1)$.  Then for $x \in [0,\infty)$,
		\begin{equation*}
		\lim_{n \to \infty} P_n\!\left(a(s_n\k_n)X_{\k_n}\leq x \right) = 1-e^{-x}.
		\end{equation*}

		\item Let $\bm{\kappa}_n := (\kappa_{n,1}, \ldots, \kappa_{n,r}) \in \mathbb{N}^r$ be a sequence of vectors with nondecreasing components
		such that $a(s_n\bm{\kappa}_n)= o(s_n^{\frac{r(r+1)}{r+3}})$.
		Then for any set of $x_{\k} \in [0,\infty)$ with $\k \in \prod_{j=1}^r \!\left[ 1,\sup_n (\kappa_{n,j}) \right]$,
		\begin{equation*}
		\lim_{n \to \infty}
		P_n\!\left(a(s_n\k)X_{\k}\leq x_{\k} \mbox{ for all }
		\k \in \prod_{j=1}^r\left[1,\kappa_{n,j} \right] \right)
		=
		\lim_{n \to\infty}
		\prod_{k_1 = 1}^{\kappa_{n,1}} \cdots \prod_{k_r = 1}^{\kappa_{n,r}}
		\!\left(1-e^{-x_{\k}}\right).
		\end{equation*}
	\end{enumerate}
\end{thm}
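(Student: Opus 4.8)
The plan is to prove both parts by transferring the computation to the Boltzmann model via the equivalence of ensembles, and then using the explicit geometric distribution of the $X_{\k}$ under $Q_{q_n}$ from Proposition~\ref{P:BoltzmannSimpleProperties}~(2). For part (1), the single index set $I_n = \{\k_n\}$ satisfies $\sup_{\k \in I_n} a(s_n \k) = a(s_n \k_n) = o(1)$, so condition (3) of Corollary~\ref{C:EquivalenceofEnsembles} applies and it suffices to compute $\lim_{n\to\infty} Q_{q_n}(a(s_n\k_n) X_{\k_n} \leq x)$. Under $Q_{q_n}$ the variable $X_{\k_n}$ is geometric with parameter $q_n^{a(\k_n)}$, so $Q_{q_n}(X_{\k_n} \leq m) = 1 - q_n^{a(\k_n)(m+1)} = 1 - e^{-s_n^{r(r+1)/2} a(\k_n)(m+1)}$. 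Since $a(\cdot)$ is homogeneous of degree $\tfrac{r(r+1)}{2}$, we have $s_n^{r(r+1)/2} a(\k_n) = a(s_n \k_n)$, so with $m_n := \lfloor x / a(s_n\k_n) \rfloor$ the probability $Q_{q_n}(a(s_n\k_n) X_{\k_n} \leq x) = 1 - e^{-a(s_n\k_n)(m_n+1)}$, and $a(s_n\k_n)(m_n+1) \to x$ because $a(s_n\k_n) \to 0$ and $a(s_n\k_n)\lfloor x/a(s_n\k_n)\rfloor \to x$. This yields the limit $1 - e^{-x}$.

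For part (2), set $I_n := \prod_{j=1}^r [1, \kappa_{n,j}]$. The key point is to verify the variance condition \eqref{E:BoldXVarSmall} of Proposition~\ref{P:EquivalenceofEnsembles} directly (rather than invoking Corollary~\ref{C:EquivalenceofEnsembles}, whose hypotheses need not hold here). Since the components of $\bm\kappa_n$ are nondecreasing, for every $\k \in I_n$ we have $a(s_n\k) \leq a(s_n\bm\kappa_n) = o(s_n^{r(r+1)/(r+3)})$; using $\tfrac{a(\k)^2 q_n^{a(\k)}}{(1-q_n^{a(\k)})^2} \asymp a(s_n\k)^2 s_n^{-r(r+1)}$ on the relevant range and bounding $\# I_n \leq \prod_j \kappa_{n,j}$, one estimates
\begin{equation*}
\mathrm{Var}_{q_n}\!\left(\sum_{\k \in I_n} a(\k) X_{\k}\right)
\ll s_n^{-r(r+1)} \sum_{\k \in I_n} a(s_n\k)^2
\ll s_n^{-r(r+1)} \big(\#I_n\big)\, a(s_n\bm\kappa_n)^2 .
\end{equation*}
Comparing with $\sigma_n^2 \asymp s_n^{-r(r+2)}$ from Proposition~\ref{P:SaddlePointAsympGrowth}, the condition \eqref{E:BoldXVarSmall} reduces to $\#I_n \cdot a(s_n\bm\kappa_n)^2 = o(s_n^{-r})$; since $a(s_n\bm\kappa_n) = \tfrac{1}{c_r}\prod_{\ell \le j}(s_n(\kappa_{n,\ell}+\dots+\kappa_{n,j}))$ and $\#I_n \le \prod_j \kappa_{n,j}$, the hypothesis $a(s_n\bm\kappa_n) = o(s_n^{r(r+1)/(r+3)})$ is exactly what makes this work out — this bookkeeping (tracking how the size of the box $I_n$ interacts with $a(s_n\bm\kappa_n)$ through the homogeneity degree) is the main technical obstacle, analogous to why the joint range in \cite[Thm.~2.2]{Fristedt} is shorter than the single-multiplicity range. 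Once \eqref{E:BoldXVarSmall} is verified, Proposition~\ref{P:EquivalenceofEnsembles} gives $d_{\mathrm{TV}}(Q_{q_n}(\bm X_{I_n}^{-1}), P_n(\bm X_{I_n}^{-1})) \to 0$.

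It then remains to compute the limiting joint distribution under $Q_{q_n}$. By independence of the $X_{\k}$ (Proposition~\ref{P:BoltzmannSimpleProperties}~(2)),
\begin{equation*}
Q_{q_n}\!\left(a(s_n\k) X_{\k} \le x_{\k} \text{ for all } \k \in I_n\right)
= \prod_{\k \in I_n} \left(1 - e^{-a(s_n\k)\left(\lfloor x_{\k}/a(s_n\k)\rfloor + 1\right)}\right),
\end{equation*}
and since $a(s_n\k) \le a(s_n\bm\kappa_n) \to 0$ uniformly over $\k \in I_n$, each factor tends to $1 - e^{-x_{\k}}$; one should note the number of factors $\#I_n$ may grow, so a small uniformity argument is needed (e.g. taking logarithms and using $|\log(1-e^{-a(s_n\k)(\lfloor x_\k/a(s_n\k)\rfloor+1)}) - \log(1-e^{-x_\k})| \to 0$ together with the fact that the stated limit on the right-hand side is assumed to exist). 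Passing through the total-variation bound converts this into the claimed limit for $P_n$, completing the proof.
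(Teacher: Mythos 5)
Part (1) is correct and matches the paper's one-line argument (Corollary~\ref{C:EquivalenceofEnsembles}~(3) plus the geometric law in Proposition~\ref{P:BoltzmannSimpleProperties}~(2)).

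Part (2) has two genuine problems. First, your claim that Corollary~\ref{C:EquivalenceofEnsembles} "need not hold here" is wrong: with $I_n = \prod_j[1,\kappa_{n,j}]$ and nondecreasing components, $\sup_{\k\in I_n}a(s_n\k)=a(s_n\bm\kappa_n)=o(s_n^{r(r+1)/(r+3)})=o(1)$, so condition~(3) applies directly, and this is exactly what the paper invokes. Your attempted direct verification of \eqref{E:BoldXVarSmall} also contains a computational error: since $1-q_n^{a(\k)}=1-e^{-a(s_n\k)}\asymp a(s_n\k)$ for small $a(s_n\k)$, the per-term variance contribution is
$\frac{a(\k)^2 q_n^{a(\k)}}{(1-q_n^{a(\k)})^2}\asymp s_n^{-r(r+1)}$, not $\asymp a(s_n\k)^2 s_n^{-r(r+1)}$ as you wrote --- you appear to have dropped the denominator. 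With the corrected estimate, closing the argument via $\#I_n$ requires the sharper count $\#I_n\leq\prod_j\kappa_{n,j}\ll a(\bm\kappa_n)^{2/(r+1)}$ (which follows from Proposition~\ref{prop:irrep_partial_sum_asymptotics} because the box $\prod_j[1,\kappa_{n,j}]$ is contained in the sublevel set $\{a(\k)\leq a(\bm\kappa_n)\}$), and then one checks $s_n^r\#I_n\ll a(s_n\bm\kappa_n)^{2/(r+1)}=o(1)$ --- note that this only uses $a(s_n\bm\kappa_n)=o(1)$, not the stronger hypothesis.

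Second, and more importantly, you misidentify where the stronger hypothesis $a(s_n\bm\kappa_n)=o(s_n^{r(r+1)/(r+3)})$ actually matters. It is not for equivalence of ensembles, but for the convergence of the Boltzmann-model joint probability, which you dismiss as "a small uniformity argument." Since $\#I_n$ grows, the factorwise limit $1-e^{-a(s_n\k)(\lfloor x_\k/a(s_n\k)\rfloor+1)}\to 1-e^{-x_\k}$ does not by itself give convergence of the product. The paper writes each factor as $1-e^{-x_\k}+\ve_n(\k)$ with $0\leq\ve_n(\k)\leq a(s_n\k)$, applies the inequality $\prod_j a_j\leq\prod_j(a_j+b_j)\leq\prod_j a_j+e^{\sum_j b_j}-1$, and then must show $\sum_{\k\in I_n}a(s_n\k)=o(1)$. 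This requires Abel partial summation together with Proposition~\ref{prop:irrep_partial_sum_asymptotics}, giving $\sum_{\k\in I_n}a(s_n\k)=O(s_n^{-r}a(s_n\bm\kappa_n)^{(r+3)/(r+1)})$, which is $o(1)$ precisely under the hypothesis $a(s_n\bm\kappa_n)=o(s_n^{r(r+1)/(r+3)})$. Until this step is carried out, the proof of part (2) is incomplete.
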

\begin{proof}
	\begin{enumerate}[wide,labelwidth=!,labelindent=0pt,label=\rm(\arabic*)]
		\item The claim follows immediately from Corollary \ref{C:EquivalenceofEnsembles} (3) and Proposition \ref{P:BoltzmannSimpleProperties} (2).

		\item Note that $s_n \to 0$, so Corollary \ref{C:EquivalenceofEnsembles} (3) applies and we have (with $I_n := \prod_{j=1}^r \left[1,\kappa_{n,j} \right]$)
		\begin{equation}\label{eq:small_irrep_joint_Pn_Qqn_difference_limit}
		\lim_{n \to \infty} \lp
		P_n\!\left(a(s_n\k)X_{\k}\leq x_{\k} \mbox{ for all }
		\k \in I_n \right)
		-
		Q_{q_n}\!\left(a(s_n\k)X_{\k}\leq x_{\k} \mbox{ for all }
		\k \in I_n \right)
		\rp = 0.
		\end{equation}
		Using the independence and distributions of $X_{\bm{k}}$ from Proposition \ref{P:BoltzmannSimpleProperties} (2), we have
		\begin{equation*}
		Q_{q_n}\!\left(a(s_n\k)X_{\k}\leq x_{\k} \mbox{ for all } \k \in I_n \right)
		= \prod_{\bm{k} \in I_n}
		\left(1-e^{-x_{\k}} + \ve_n (\bm{k}) \right),
		\end{equation*}
		where the error terms satisfy $0 \leq \ve_n (\bm{k}) \leq a(s_n\k)$.
		We now use that, for any $0 \leq a_j \leq 1$ and $0 \leq b_j$,
		\begin{equation*}
		\prod_{j=1}^m a_j \leq
		\prod_{j=1}^m (a_j + b_j) \leq \prod_{j=1}^m a_j + \prod_{j=1}^m (1+b_j) -1
		\leq \prod_{j=1}^m a_j + e^{\sum_{j=1}^m b_j} - 1
		\end{equation*}
		so that
		\begin{equation*}
		Q_{q_n}\!\left(a(s_n\k)X_{\k}\leq x_{\k} \mbox{ for all } \k \in I_n \right)
		= \prod_{\bm{k} \in I_n} \left(1-e^{-x_{\k}} \right)
		+ O\!\lp \exp\!\left(\sum_{\k \in I_n} a(s_n\k) \right)-1 \rp .
		\end{equation*}
		For the sum in the exponential we have
		\begin{equation*}
		\sum_{\k \in I_n} a(s_n\k)
		\leq s_n^{\frac{r(r+1)}{2}}\sum_{m =1}^{a(\bm{\kappa}_n)} m \varrho_r(m)
		=
		O\!\lp s_n^{\frac{r(r+1)}{2}} a(\bm{\kappa}_n)^{1+\frac{2}{r+1}} \rp,
		\end{equation*}
		where the final step follows from Abel partial summation and
		Proposition \ref{prop:irrep_partial_sum_asymptotics}.
		This result is then $o(1)$ by the assumption on $a(s_n\bm{\kappa}_n)$. \
		Thus we have
		\begin{equation*}
		\lim_{n \to \infty}
		Q_{q_n}\!\left(a(s_n\k)X_{\k}\leq x_{\k} \mbox{ for all } \k \in I_n \right)
		=
		\lim_{n \to \infty}
		\prod_{k_1 = 1}^{\kappa_{n,1}} \cdots \prod_{k_r = 1}^{\kappa_{n,r}}
		\!\left(1-e^{-x_{\k}}\right)
		\end{equation*}
		with the limit on the right hand side converging thanks to the assumption that the components of $\bm{\kappa}_n$ are nondecreasing in $n$. Using this result with equation
\eqref{eq:small_irrep_joint_Pn_Qqn_difference_limit}, the theorem statement follows.
\qedhere
\end{enumerate}
\end{proof}

\section{Maximum dimension and height}\label{S:MaxDimHeight}

\subsection{Maximum dimension}
Next, we prove the distribution for the largest dimension among all irreducible representations occurring in the decomposition.  Recall that this is $D=\max_{X_{\k}>0} (a(\k))$.

\begin{thm}\label{T:lgstajk}
For any $x \in \IR$ we have
$$
\lim_{n \to \infty} P_n\!\left(s_n^{\frac{r(r+1)}{2}}D-\log\left(s_n^{-r}\right)+\frac{r-1}{r+1}\log\left(\log\left(s_n^{-r}\right)\right) -\log\left(\frac{2C_r}{r+1}\right)\leq x\right) = e^{-e^{-x}}.
$$
\end{thm}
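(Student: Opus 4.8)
The plan is to compute the limiting distribution of $D$ under the Boltzmann model $Q_{q_n}$ and then transfer it to $P_n$. The obstacle is that the event $\{D \le t\}$ depends on \emph{all} the multiplicities $X_{\bm k}$ with $a(\bm k) \le t$ (and the relevant threshold $t$ grows like $\log(s_n^{-r})/s_n^{r(r+1)/2}$, so the corresponding index set $I_n$ covers essentially the whole range of $\bm k$), hence Corollary \ref{C:EquivalenceofEnsembles} does not apply directly. This is where the rare events lemma enters. First I would, under $Q_{q_n}$, use independence of the $X_{\bm k}$ (Proposition \ref{P:BoltzmannSimpleProperties}(2)) to write, for a threshold $t = t_n(x)$ chosen so that $s_n^{r(r+1)/2} t_n(x) = \log(s_n^{-r}) - \frac{r-1}{r+1}\log\log(s_n^{-r}) + \log\frac{2C_r}{r+1} + x + o(1)$,
\begin{equation*}
Q_{q_n}(D \le t_n(x)) = \prod_{\bm k : a(\bm k) > t_n(x)} Q_{q_n}(X_{\bm k} = 0) = \prod_{\bm k : a(\bm k) > t_n(x)} \bigl(1 - q_n^{a(\bm k)}\bigr) = \exp\!\Biggl( \sum_{\bm k : a(\bm k) > t_n(x)} \operatorname{Log}\bigl(1 - q_n^{a(\bm k)}\bigr) \Biggr).
\end{equation*}
Then I would expand $\operatorname{Log}(1-q_n^{a(\bm k)}) = -q_n^{a(\bm k)} + O(q_n^{2a(\bm k)})$ and estimate the sum $\sum_{a(\bm k) > t_n(x)} q_n^{a(\bm k)}$ by replacing it with an integral, exactly as in the proof of Proposition \ref{P:SaddlePointAsympGrowth}: writing $q_n = e^{-s_n^{r(r+1)/2}}$ and substituting $\bm t = s_n \bm k$, this sum is $s_n^{-r} \sum_{\bm k} e^{-a(s_n \bm k)} \chi_{a(s_n\bm k) > s_n^{r(r+1)/2} t_n(x)} \approx s_n^{-r} \int_{a(\bm y) > s_n^{r(r+1)/2} t_n(x)} e^{-a(\bm y)} \, \bm{dy}$. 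Using the coarea/level-set decomposition from Lemma \ref{lem:irrep_polynomial_lattice_counting} and Proposition \ref{prop:irrep_partial_sum_asymptotics} (so that the measure of $\{a(\bm y) \le u\}$ is $\sim C_r u^{2/(r+1)}$, whence the measure of a thin shell at level $u$ is $\sim \frac{2C_r}{r+1} u^{(1-r)/(r+1)}$), the integral becomes $\sim \frac{2C_r}{r+1} s_n^{-r} \int_{M_n}^\infty e^{-u} u^{\frac{1-r}{r+1}} \, du$ with $M_n := s_n^{r(r+1)/2} t_n(x)$; since $M_n \to \infty$, integration by parts (or Watson's lemma at infinity) gives $\int_{M_n}^\infty e^{-u} u^{\frac{1-r}{r+1}}\, du \sim e^{-M_n} M_n^{\frac{1-r}{r+1}}$. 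Substituting the chosen form of $M_n$, the factor $s_n^{-r} e^{-M_n} = s_n^{-r} \cdot s_n^{r} \cdot (\log(s_n^{-r}))^{\frac{r-1}{r+1}} \cdot \frac{r+1}{2C_r} e^{-x}(1+o(1))$ cancels against $\frac{2C_r}{r+1} M_n^{\frac{1-r}{r+1}} \sim \frac{2C_r}{r+1}(\log(s_n^{-r}))^{\frac{1-r}{r+1}}$, leaving $\sum_{a(\bm k) > t_n(x)} q_n^{a(\bm k)} = e^{-x} + o(1)$; the tail $O(\sum q_n^{2a(\bm k)})$ is negligible by the same estimate with $q_n^2$ in place of $q_n$. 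Hence $Q_{q_n}(D \le t_n(x)) \to e^{-e^{-x}}$.

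To transfer this to $P_n$, I would use Lemma \ref{lem:ExpSmallPrinciple} in the form of a sandwiching argument: split $I_n = \{\bm k : a(\bm k) \le t_n(x)\}$ into a "bulk" part $I_n'$ that does satisfy one of the hypotheses of Corollary \ref{C:EquivalenceofEnsembles} (e.g.\ $\{\bm k : a(s_n \bm k) \le \delta\}$ for small fixed $\delta$, or an $I_n$ shrinking slowly) and a "rare" part, and show that the contribution of the rare part — namely the event that some $X_{\bm k}$ with $\bm k$ in the middle/large range is nonzero but $a(\bm k) \le t_n(x)$, or conversely discrepancies between $D$ and the truncated statistic — has $Q_{q_n}$-probability that is $o(s_n^{r(r+2)/2})$. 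Concretely, the event $\{D > t_n(x+\varepsilon)\} \setminus \{D > t_n(x)\}$ type corrections and the event that the bulk-truncated max differs from $D$ are controlled because $Q_{q_n}(X_{\bm k} > 0 \text{ for some } \bm k \text{ with } u_1 < a(\bm k) \le u_2) \le \sum q_n^{a(\bm k)}$ over that shell, which we have just shown is small when $u_1$ is large. Thus $P_n(D \le t_n(x)) = Q_{q_n}(D \le t_n(x)) + o(1) \to e^{-e^{-x}}$, which is the claimed statement after rearranging $t_n(x)$ into the normalized form $\frac{D - a_n^{[D]}}{b_n^{[D]}}$.

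The main obstacle I anticipate is the bookkeeping in the second paragraph: getting the asymptotics of the incomplete integral $\int_{M_n}^\infty e^{-u} u^{(1-r)/(r+1)}\, du$ precise enough (including the $o(1)$ error from replacing the lattice sum by the integral, which must be shown to beat the $e^{-x}$-sized main term — this needs the error term in Proposition \ref{prop:irrep_partial_sum_asymptotics} and care near the boundary $y_a \to 0$ as handled in Lemma \ref{lem:irrep_polynomial_integral}) so that the triple-logarithmic and constant terms in $a_n^{[D]}$ come out exactly right. The transfer step via the rare events lemma is conceptually routine once the shell-sum estimate $\sum_{a(\bm k) > u} q_n^{a(\bm k)} \ll e^{-cu s_n^{r(r+1)/2}} \cdot (\text{poly})$ is in hand, but care is needed to choose the bulk/rare split so that the bulk genuinely falls under Corollary \ref{C:EquivalenceofEnsembles} while the rare complement is $o(s_n^{r(r+2)/2})$ rather than merely $o(1)$.
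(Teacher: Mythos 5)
Your computation of the Boltzmann limit $Q_{q_n}(D\le\ell_{x,n})\to e^{-e^{-x}}$ is essentially the paper's argument: take logarithms, expand $\operatorname{Log}(1-q_n^{a(\bm k)})=-q_n^{a(\bm k)}+O(q_n^{2a(\bm k)})$, convert the sum over $\bm k$ to a sum over dimensions via $\varrho_r$, apply Abel summation (or equivalently your integral/shell approximation) together with the $R_r$ asymptotics of Proposition \ref{prop:irrep_partial_sum_asymptotics}, and finish with the incomplete-Gamma expansion. That part is fine.

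The gap is in your transfer step, and it stems from a misidentification of which multiplicities the event depends on. You write that $\{D\le t\}$ ``depends on all the multiplicities $X_{\bm k}$ with $a(\bm k)\le t$'', and then set $I_n=\{\bm k:a(\bm k)\le t_n(x)\}$ and try to split it into a bulk falling under Corollary \ref{C:EquivalenceofEnsembles}(3) plus a rare tail. But $\{D\le t\}$ is exactly the event $\{X_{\bm k}=0\text{ for all }\bm k\text{ with }a(\bm k)>t\}$ — it is completely insensitive to the multiplicities with $a(\bm k)\le t$. Your proposed bulk/rare split is therefore built on the wrong index set, and the ``rare'' complement (the middle range between $\delta s_n^{-r(r+1)/2}$ and $\ell_{x,n}$) is not at all rare: $D$ itself typically sits there, so the truncated statistic you describe would differ from $D$ with probability close to $1$, not $o(1)$. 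The paper avoids all of this by observing that the correct index set $I_n=\{\bm k:a(\bm k)>\ell_{x,n}\}$ satisfies $\inf_{\bm k\in I_n}a(s_n\bm k)=s_n^{r(r+1)/2}\ell_{x,n}\to\infty$, so Corollary \ref{C:EquivalenceofEnsembles}(4) applies directly and no rare events lemma or sandwiching is needed. Once you make this correction the transfer is immediate.
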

\begin{proof}
The theorem statement can be written as
$\lim_{n \to \infty} P_n\left(D \leq\ell_{x,n}\right)=e^{-e^{-x}}$, where
\begin{equation*}
\ell_{x,n}:=
s_n^{-\frac{r(r+1)}{2}}
\left(
\w_n-\frac{r-1}{r+1}\log (\w_n) +\log\left(\frac{2C_r}{r+1}\right)+x \right)
\ \mbox{ and } \
\w_n := \log\!\left(s_n^{-r}\right).
\end{equation*}
From now on we restrict our attention to $n$ sufficiently large, where $\w_n >0$ and $\ell_{x,n} > 0$.
First note that the event $D \leq\ell_{x,n}$ can be equivalently written as
\begin{equation*}
X_{\bm{k}} = 0 \quad \mbox{for all }
\bm{k} \in I_n := \{\bm{k} \in \IN^r : \ a(\bm{k}) > \ell_{x,n} \}
\end{equation*}
and can be studied with the marginal distribution for the random variable $\bm{X}_{I_n}$.
Since we have
\begin{equation}\label{E:ellxnlarge}
\lim_{n\to \infty} \lp s_n^{\frac{r(r+1)}{2}} \ell_{x,n} \rp = \infty,
\end{equation}
the set $I_n$ satisfies Corollary \ref{C:EquivalenceofEnsembles} (4) and it is enough to prove the claimed limit for $Q_{q_n}$ as\footnote{
In fact, following our arguments, one can prove a stronger result that allows $x$ to depend on $n$ as well and show
\begin{equation*}
-\log\!\left(Q_{q_n}(D\leq \ell_{x_n,n})\right) = e^{-x_n}(1+o(1))+o(1)
\mbox{ as } n \to \infty
\end{equation*}
for any sequence of real numbers $x_n$ such that
$- \w_n^{\frac{1}{2}} \leq x_n \leq \w_n^{\frac{1}{2}}$
for all $n$ and that the convergence is uniform in the sequence $x_n$.
As in \cite{Fristedt}, such results are required to go beyond the weak convergence that we establish here.
}
\begin{equation}\label{E:MaxDimLimitQn}
\lim_{n \to \infty} Q_{q_n} \!\left(D \leq\ell_{x,n}\right)=e^{-e^{-x}} .
\end{equation}

To begin our proof, we recall the definition of $D$ to find
\begin{equation*}
\sum_{\rho: \, D\leq \ell_{x,n}} q_n^{\dim(\rho)}=\prod_{\substack{\k \in \mathbb{N}^r \\ a(\k) \leq \ell_{x,n}}} \frac{1}{1-q_n^{a(\k)}}
\end{equation*}
so that
\begin{equation*}
-\log \!\left(Q_{q_n}(D\leq \ell_{x,n})\right)=-\sum_{\substack{\k \in \mathbb{N}^r \\ a(\k) > \ell_{x,n}}} \log\left(1-q_n^{a(\k)}\right).
\end{equation*}
Note that for $a(\k)> \ell_{x,n}$ we have
\begin{equation*}
q_n^{a(\k)} < \exp\!\lp - s_n^{\frac{r(r+1)}{2}} \ell_{x,n}  \rp.
\end{equation*}
Since $t \leq -\log(1-t) \leq t+t^2$ for $t\in [0,\frac{1}{2}]$, by \eqref{E:ellxnlarge} we use it for $n$ sufficiently large to get
\begin{equation*}
-\log(Q_{q_n}(D\leq \ell_{x,n}))
=
\sum_{m > \ell_{x,n}}\! q_n^{m}\varrho_r(m) +
O\!\left( \sum_{ m > \ell_{x,n}} q_n^{2m}\varrho_r(m)\right)\!.
\end{equation*}
Abel partial summation gives, after some simplification,
$$
\sum_{m > \ell_{x,n}} q_n^{m}\varrho_r(m)=-e^{-s_n^{\frac{r(r+1)}{2}} \ell_{x,n}}R_r(\ell_{x,n})
+\int_{\ell_{x,n} s_n^{\frac{r(r+1)}{2}}}^{\infty} R_r\!\left(s_n^{-\frac{r(r+1)}{2}} t \right) e^{-t} dt.
$$
Next, Proposition \ref{prop:irrep_partial_sum_asymptotics} yields
\begin{align}
\sum_{m > \ell_{x,n}} q_n^{m}\varrho_r(m)
&=
-C_re^{-s_n^{\frac{r(r+1)}{2}} \ell_{x,n}} \ell_{x,n}^{\frac{2}{r+1}}
+C_r s_n^{-r} \Gamma\!\left(\frac{r+3}{r+1}, \ell_{x,n} s_n^{\frac{r(r+1)}{2}} \right)\nonumber \\*
& \hspace{8mm} + O\left(e^{-s_n^{\frac{r(r+1)}{2}} \ell_{x,n}}
\ell_{x,n}^{\frac{2(r-1)}{r^2}}
+ s_n^{-\frac{r^2-1}{r}}  \Gamma\!\left(\frac{r^2+2r-2}{r^2}, \ell_{x,n} s_n^{\frac{r(r+1)}{2}} \right)\right), \label{E:Dmainterm}
\end{align}
where
\begin{equation}\label{eq:incomplete_gamma_definition}
\Gamma(a,v) := \int_v^\infty t^{a-1} e^{-t} dt
\end{equation}
denotes the {\it incomplete $\Gamma$-function}. From \cite[(8.11.2)]{NIST}, we have, as $v \to \infty$,
\begin{equation}\label{eq:incomplete_gamma_asymptotic_behavior}
	\Gamma (a,v) = e^{-v}\left(v^{a-1}+(a-1)v^{a-2}+O\!\left(v^{a-3}\right)\right).
\end{equation}
so in particular,
$$
\Gamma\!\left(\frac{r+3}{r+1},v\right) = e^{-v}\left(v^{\frac{2}{r+1}}+\frac{2}{r+1}v^{-\frac{r-1}{r+1}}+O\!\left(v^{-\frac{2r}{r+1}}\right)\right), \qquad
\Gamma\!\left(\frac{r^2+2r-2}{r^2},v\right) \sim v^{\frac{2r-2}{r^2}}e^{-v} .
$$
By \eqref{E:ellxnlarge}, we may use the above expansions in \eqref{E:Dmainterm}.
The term on the far left in \eqref{E:Dmainterm} cancels with the leading term from the second contribution, and we get, after some simplification,
$$
\sum_{m > \ell_{x,n}} q_n^{m}\varrho_r(m)
=
\frac{2C_r}{r+1} s_n^{-\frac{r(r+1)}{2}} \ell_{x,n}^{-\frac{r-1}{r+1}} e^{-s_n^{\frac{r(r+1)}{2}} \ell_{x,n}}
(1+o(1)) + O\!\left(e^{-s_n^{\frac{r(r+1)}{2}} \ell_{x,n}}  \ell_{x,n}^{\frac{2(r-1)}{r^2}}
\right).
$$
The error term may be bounded as
\begin{equation*}
e^{-s_n^{\frac{r(r+1)}{2}} \ell_{x,n}}  \ell_{x,n}^{\frac{2(r-1)}{r^2}}
\ll
e^{- \w_n (1+o(1))} s_n^{-\frac{r^2-1}{r}} \w_n^{\frac{2(r-1)}{r^2}}
=
e^{- \w_n \lp \frac{1}{r^2}+o(1) \rp}  \w_n^{\frac{2(r-1)}{r^2}},
\end{equation*}
which is $o(1)$ because $\w_n$ tends to infinity as $n \to \infty$.
The leading term  may be simplified to
\begin{equation*}
\frac{2C_r}{r+1} s_n^{-\frac{r(r+1)}{2}} \ell_{x,n}^{-\frac{r-1}{r+1}} e^{-s_n^{\frac{r(r+1)}{2}} \ell_{x,n}}
=
e^{-x}
\left(1 + \frac{1}{\w_n} \lp x + \log\left(\frac{2C_r}{r+1}\right)
- \frac{r-1}{r+1} \log (\w_n) \rp \right)^{-\frac{r-1}{r+1}}
\end{equation*}
and it contributes as $e^{-x}(1+o(1))$.
So overall we have
\begin{equation*}
\sum_{m > \ell_{x,n}} q_n^{m}\varrho_r(m) = e^{-x}(1+o(1))+o(1).
\end{equation*}
A similar calculation shows that
$$
\sum_{m > \ell_{x,n}} q_n^{2m}\varrho_r(m)  = o(1),
$$
thereby completing the proof of \eqref{E:MaxDimLimitQn}.
\end{proof}

\subsection{Height} \label{S:Heightdistribution}
We continue our discussion with the distribution of height, where we recall that the height of a representation is given as
$H=\max_{X_{\k}>0} (L(\bm{k}-\bm{1}))$ with $L$ defined in \eqref{eq:height_l_function_defn}.
Our main line of attack to develop a limiting probability under $P_n$ is once again proving such a limit for $Q_{q_n}$ and then transferring this to $P_n$ with the equivalence of ensembles.
In our discussion below, it becomes clear that irreducible representations with $\bm{k}$ of the form $(k,1,\ldots,1)$ or $(1,\ldots,1,k)$ dominate the asymptotic behavior under $Q_{q_n}$. So we start with a few technical lemmas that we employ in establishing this behavior.
The first of these is an easy consequence of the definition of $a(\bm{k})$ in \eqref{E:akDef} and of $L(\bm{k})$ in \eqref{eq:height_l_function_defn}.

\begin{lem}\label{lem:height_dim_representation_bounds}
Let $\bm{k} \in \IN^r$.
\begin{enumerate}[label=\rm(\arabic*),leftmargin=*]
\item Defining $k_{\mathrm{max}} := \max\{\bm{k}\}$, we have
\begin{equation*}
\frac{12}{r(r+1)(r+2)} L(\bm{k}-\bm{1}) \leq k_{\mathrm{max}}-1 \leq \frac{2}{r} L(\bm{k}-\bm{1}) .
\end{equation*}

\item For $j \in \{1,2,\ldots,r\}$, we have $a(\bm{k}) \gg k_j^{j(r+1-j)}$ (with the implied constant dependent on $r$).
\end{enumerate}
\end{lem}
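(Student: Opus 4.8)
Both parts are elementary once one unwinds the definitions, so the plan is essentially bookkeeping. Set $c_j := j(r+1-j)$ for $1 \le j \le r$, so that by \eqref{eq:height_l_function_defn} we have $L(\bm{k}-\bm{1}) = \tfrac12 \sum_{j=1}^r c_j (k_j - 1)$, with every summand nonnegative since $\bm{k} \in \IN^r$. The two facts about the $c_j$ that I would record first are: (a) $\min_{1 \le j \le r} c_j = r$, attained at $j = 1$ and $j = r$ (since $c_j = j(r+1-j)$ is concave in $j$ and symmetric about $j = \tfrac{r+1}{2}$); and (b) the standard evaluation $\sum_{j=1}^r c_j = \sum_{j=1}^r j(r+1-j) = (r+1)\tfrac{r(r+1)}{2} - \tfrac{r(r+1)(2r+1)}{6} = \tfrac{r(r+1)(r+2)}{6}$.

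For part (1), the upper bound on $k_{\mathrm{max}} - 1$ follows from (a): since all $k_j - 1 \ge 0$,
\[
L(\bm{k}-\bm{1}) = \tfrac12 \sum_{j=1}^r c_j (k_j-1) \ge \tfrac{r}{2} \max_{1 \le j \le r} (k_j - 1) = \tfrac{r}{2} (k_{\mathrm{max}} - 1),
\]
which rearranges to $k_{\mathrm{max}} - 1 \le \tfrac{2}{r} L(\bm{k}-\bm{1})$. The lower bound follows from (b): since $k_j - 1 \le k_{\mathrm{max}} - 1$ for every $j$,
\[
L(\bm{k}-\bm{1}) \le \tfrac12 (k_{\mathrm{max}}-1) \sum_{j=1}^r c_j = \tfrac{r(r+1)(r+2)}{12}(k_{\mathrm{max}}-1),
\]
which rearranges to $k_{\mathrm{max}} - 1 \ge \tfrac{12}{r(r+1)(r+2)} L(\bm{k}-\bm{1})$.

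For part (2), I would count how many of the factors $k_\ell + \cdots + k_m$ occurring in $h_r(\bm{k}) = c_r\, a(\bm{k}) = \prod_{1 \le \ell \le m \le r}(k_\ell + \cdots + k_m)$ "cover" the index $j$, i.e.\ satisfy $\ell \le j \le m$: there are exactly $j$ choices for $\ell \in \{1,\dots,j\}$ and $r+1-j$ choices for $m \in \{j,\dots,r\}$, hence $j(r+1-j)$ such pairs, and each of the corresponding factors is $\ge k_j$. All remaining factors are $\ge 1$ because $\bm{k} \in \IN^r$, so $h_r(\bm{k}) \ge k_j^{\,j(r+1-j)}$ and therefore $a(\bm{k}) = \tfrac{1}{c_r} h_r(\bm{k}) \ge \tfrac{1}{c_r} k_j^{\,j(r+1-j)}$, giving the claim with implied constant $1/c_r$.

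There is no genuine obstacle here: the only computation needing care is the identity $\sum_{j=1}^r j(r+1-j) = \tfrac{r(r+1)(r+2)}{6}$, and even the boundary cases $j=1, r$ of part (2) are automatically handled since then $j(r+1-j) = r$ matches the $r$ factors $k_1 + \cdots + k_m$ (resp.\ $k_\ell + \cdots + k_r$).
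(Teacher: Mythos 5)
Your proof is correct and complete. The paper omits a proof of this lemma altogether, remarking only that it ``is an easy consequence'' of the definitions of $a(\bm{k})$ and $L(\bm{k})$; your argument supplies exactly the elementary bookkeeping that remark has in mind. Both bounds in part (1) follow from the correct extremal properties of $c_j = j(r+1-j)$ (minimum $r$ at the endpoints, total sum $\tfrac{r(r+1)(r+2)}{6}$), and your count in part (2) of the $j(r+1-j)$ factors of $h_r(\bm{k})$ containing the index $j$, each of which is $\geq k_j$ with the rest $\geq 1$, is precisely the natural argument. One small note: in the upper bound of part (1) you wrote the inequality $\sum_j c_j(k_j-1) \geq r \max_j(k_j-1)$ directly; strictly speaking one should first drop all terms except $j^* = \arg\max k_j$ and then use $c_{j^*} \geq r$, but this is clearly what you meant and the conclusion is unchanged.
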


\begin{rem}
If $k_j = \max\{\bm{k}\}$, then Lemma \ref{lem:height_dim_representation_bounds} implies that
\begin{equation}\label{eq:dimension_height_rough_bound}
a(\bm{k}) \gg L(\bm{k}-\bm{1})^{j(r+1-j)} .
\end{equation}
In particular, independent of the position of $\max\{\bm{k}\}$ we overall have
$a(\bm{k}) \gg L(\bm{k}-\bm{1})^{r}$.
\end{rem}

For our subsequent arguments, we need to separate the contributions of vectors $\bm{k}$ of the form
$(k,1,\ldots,1)$ or $(k,1,\ldots,1,2,1,\ldots,1)$ and their reverse. More precisely, we
partition $\IN^r$ to
\begin{equation}\label{eq:Qqn_height_distribution_Lambda_set_definition}
\LL_1 := \IN \times S_1 \cup S_1 \times \IN,
\quad
\LL_2 := \IN_{\geq 2} \times S_2 \cup S_2 \times \IN_{\geq 2},
\andd
\LL := \IN^r \setminus (\LL_1 \cup \LL_2),
\end{equation}
where
\begin{equation*}
S_1 := \{(1,\ldots,1)\}, \ \
S_2 := \{(2,1,\ldots,1),(1,2,1,\ldots,1),\ldots,(1,\ldots,1,2)\} \subset \IN^{r-1}.
\end{equation*}
Here we note that for vectors of the form $\bm{k} = (k,1,\ldots,1)$, we have
$a(\bm{k}) = \frac{k(k+1)\cdots (k+r-1)}{r!}$
and $\frac{2}{r} L(\bm{k}-\bm{1}) = (k-1)$ so that
$a(\bm{k}) \sim \frac{1}{r!} (\frac{2}{r} L(\bm{k}-\bm{1})) ^r$ as $k \to \infty$.
Similarly, for vectors of the form $\bm{k} = (k,2,\ldots,1)$ we have
$a(\bm{k}) \sim \frac{1}{(r-1)!} \lp \frac{2}{r} L(\bm{k}-\bm{1}) \rp^r$ as $k \to \infty$. Our next goal is to establish a more precise lower bound for $a(\bm{k})$ in terms of $L(\bm{k}-\bm{1})$ that improves the constants in these asymptotic results by a factor of $\frac{3r}{2}$ and $\frac{3}{2}$, respectively, for vectors in $\Lambda$.

\begin{lem}\label{lem:height_representation_dominant_weights}
There exists $N_r \in \IN$ such that for all $\bm{k} \in \LL$ with $L(\bm{k}-\bm{1}) > N_r$ we have
\begin{equation*}
a(\bm{k}) \geq \frac{3}{2} \frac{1}{(r-1)!} \lp \frac{2}{r} L(\bm{k}-\bm{1}) \rp^r .
\end{equation*}
\end{lem}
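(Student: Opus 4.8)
The plan is to quantify the heuristic that the vectors in $\LL$ are ``fat enough'' that $a(\bm{k})$ exceeds $\bigl(\tfrac{2}{r}L(\bm{k}-\bm{1})\bigr)^r$ by the factor $\tfrac{3}{2(r-1)!}$; the families $\LL_1,\LL_2$ were removed precisely because they realize the extreme thin configurations $(k,1,\dots,1)$ and $(k,2,1,\dots,1)$ where the constant is only $\tfrac1{r!}$, resp.\ $\tfrac1{(r-1)!}$. First I would dispose of the base case $r=2$ directly: there $\LL=\{(k_1,k_2):k_1,k_2\ge 3\}$, $a(\bm{k})=\tfrac12 k_1k_2(k_1+k_2)$ and $\tfrac{2}{r}L(\bm{k}-\bm{1})=k_1+k_2-2$, and $k_1k_2(k_1+k_2)\ge 3(k_1+k_2-2)^2$ holds for all such $\bm{k}$, since for fixed $k_1+k_2=s$ the product $k_1k_2$ is least at $\{3,s-3\}$, reducing the claim to $s\ge 4$. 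For $r\ge 3$ write $k_{\mathrm{max}}=k_a=\max\{\bm{k}\}$; by Lemma~\ref{lem:height_dim_representation_bounds}(1), $L(\bm{k}-\bm{1})$ large forces $k_{\mathrm{max}}$ large and $\tfrac{2}{r}L(\bm{k}-\bm{1})\le\tfrac{(r+1)(r+2)}{6}(k_{\mathrm{max}}-1)$.

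The first case is when the maximum is at an interior position, $2\le a\le r-1$ (possible only for $r\ge 3$). Then $a(r+1-a)\ge 2(r-1)\ge r+1$, so Lemma~\ref{lem:height_dim_representation_bounds}(2) gives $a(\bm{k})\gg_{r}k_{\mathrm{max}}^{\,r+1}$, while $\bigl(\tfrac{2}{r}L(\bm{k}-\bm{1})\bigr)^r\ll_{r}k_{\mathrm{max}}^{\,r}$ by the bound just recalled. Hence the ratio $a(\bm{k})\big/\bigl(\tfrac{2}{r}L(\bm{k}-\bm{1})\bigr)^r$ is $\gg_{r}k_{\mathrm{max}}$, which exceeds $\tfrac{3}{2(r-1)!}$ once $L(\bm{k}-\bm{1})$ (hence $k_{\mathrm{max}}$) is large enough.

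The remaining case is that the maximum sits at an endpoint; by the reversal symmetry $\bm{k}\mapsto(k_r,\dots,k_1)$, which preserves $\LL$, $a(\cdot)$ and $L(\cdot)$, I may assume $k_r=k_{\mathrm{max}}$. Then I would use the identity $a(\bm{k})=\tfrac{1}{r!}\,a_{r-1}(\bm{k}_{\widehat{r}})\prod_{j=1}^r(k_j+\dots+k_r)$ from Section~\ref{S:MinorArcs}. Since $\bm{k}\in\LL$ and $k_r=k_{\mathrm{max}}$, the vector $\bm{k}_{\widehat{r}}$ lies in neither $S_1$ nor $S_2$, so $\Delta:=\sum_{i=1}^{r-1}(k_i-1)\ge 2$. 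The combinatorial core I would isolate as a sub-lemma is that $a_{r-1}(\bm{m})\ge \tfrac{r(r+1)}{2}$ for every $\bm{m}\in\IN^{r-1}\setminus(S_1\cup S_2)$, the minimum being the dimension of $\operatorname{Sym}^2$ of the standard $\mathfrak{sl}_r(\IC)$-module, attained at $\bm{m}=(3,1,\dots,1)$ and its reverse; since $a_{r-1}$ has nonnegative coefficients and so is nondecreasing in each coordinate, this reduces to evaluating $a_{r-1}$ at the finitely many coordinatewise-minimal vectors $2e_i+\bm{1}$ and $e_i+e_j+\bm{1}$ and checking each value is $\ge\tfrac{r(r+1)}{2}$. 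I would then split on the size of $\Delta$. If $\Delta\le C_r$ for a suitable $r$-dependent constant $C_r$, then $\bm{k}_{\widehat{r}}$ ranges over a finite set, and as $k_r\to\infty$ one has $\tfrac{2}{r}L(\bm{k}-\bm{1})=(k_r-1)(1+o(1))$ and $\prod_{j=1}^r(k_j+\dots+k_r)=k_r^{\,r}(1+o(1))$, so the ratio tends to $a_{r-1}(\bm{k}_{\widehat{r}})/r!\ge\tfrac{r+1}{2(r-1)!}>\tfrac{3}{2(r-1)!}$, uniformly over this finite set (and on it $L(\bm{k}-\bm{1})$ large does force $k_r$ large). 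If instead $\Delta>C_r$, then some $k_i$ with $i\le r-1$ is large, the rank $r-1$ analogue of Lemma~\ref{lem:height_dim_representation_bounds}(2) makes $a_{r-1}(\bm{k}_{\widehat{r}})$ grow at least like a fixed positive power of that coordinate, and combining this with $\prod_{j=1}^r(k_j+\dots+k_r)\ge k_r^{\,r}$ and $\tfrac{2}{r}L(\bm{k}-\bm{1})\ll_{r}k_r$ forces the desired inequality as soon as $C_r$ was chosen large enough (in terms of $r$ only).

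The main obstacle is the uniformity in the endpoint case: $\bm{k}_{\widehat{r}}$ ranges over an infinite set, so simply letting $k_r\to\infty$ for each fixed $\bm{k}_{\widehat{r}}$ does not by itself yield one threshold $N_r$. The dichotomy ``$\Delta$ bounded vs.\ $\Delta$ large'' together with the sub-lemma $\min_{\bm m\notin S_1\cup S_2}a_{r-1}(\bm m)=\tfrac{r(r+1)}{2}$ is exactly what overcomes this --- for $\Delta$ bounded the relevant $\bm{k}_{\widehat{r}}$ form a finite set so a uniform threshold exists, and for $\Delta$ large the superlinear growth of $a_{r-1}$ in one coordinate absorbs the extra weight that $\bm{k}_{\widehat{r}}$ contributes to $L$. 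Choosing $C_r$ and then $N_r$ as explicit functions of $r$, and carrying out the finite check behind the sub-lemma, is where essentially all the work lies; the rest is bookkeeping with Lemma~\ref{lem:height_dim_representation_bounds} and the homogeneity of $a$ and $L$.
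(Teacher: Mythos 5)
Your proposal takes a genuinely different route from the paper's in the endpoint case. You first handle the interior-maximum case exactly as the paper does (via Lemma \ref{lem:height_dim_representation_bounds} and the remark \eqref{eq:dimension_height_rough_bound}), and the $r=2$ base case is the same explicit computation. For the endpoint case the paper shifts by $\bm{1}$, expands $r!\,a(\bm{k}+\bm{1})$ via the Weyl product \eqref{eq:lem_height_representation_dimension}, sets $\kappa_m=\max\{k_2,\dots,k_r\}$, bounds $\tfrac{2}{r}L(\bm{k})\le k_1+r^2\kappa_m$, and then argues by a dichotomy on $\kappa_m$ (first $\kappa_m=1$, then $2\le\kappa_m\le 34$ via a product lower bound, finally $\kappa_m\ge 35$ via a Cauchy--Schwarz absorption trick on the denominator). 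Your route instead exploits the factorization $a_r(\bm{k})=\tfrac{1}{r!}a_{r-1}(\bm{k}_{\widehat{r}})\prod_{j}(k_j+\dots+k_r)$ from Section \ref{S:MinorArcs}, observes that $\bm{k}\in\LL$ with $k_r$ maximal forces $\bm{k}_{\widehat{r}}\notin S_1\cup S_2$, and then dichotomizes on $\Delta=\sum_{i<r}(k_i-1)$: for bounded $\Delta$ the ratio tends to $a_{r-1}(\bm{k}_{\widehat{r}})/r!$ (uniformly over a finite set of tails), while for large $\Delta$ the rank-$(r-1)$ version of Lemma \ref{lem:height_dim_representation_bounds}(2) makes $a_{r-1}(\bm{k}_{\widehat{r}})$ grow and absorb the constants. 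This is a clean alternative, and it transparently explains why $\LL_1$ and $\LL_2$ are exactly the thin families one must exclude.

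However, the proposal has a genuine gap in the sub-lemma you isolate, namely that $a_{r-1}(\bm{m})\ge\tfrac{r(r+1)}{2}$ for every $\bm{m}\in\IN^{r-1}\setminus(S_1\cup S_2)$. You correctly reduce this to the coordinatewise-minimal vectors $2e_i+\bm{1}$ and $e_i+e_j+\bm{1}$, but you describe this as a ``finite check,'' which it is not uniformly in $r$: there are $O(r^2)$ such vectors, and $r$ is a free parameter. A proof uniform in $r$ is needed. For $2e_1+\bm{1}$ one has equality $\dim\mathrm{Sym}^2 V=\binom{r+1}{2}$; for $2e_i+\bm{1}$ with $i$ interior one has to argue monotonicity of the Weyl product as the large coordinate moves inward; for $e_1+e_j+\bm{1}$ one can compute $\dim V_{\lambda_1+\lambda_j}=\binom{r}{j}\tfrac{j(r+1)}{j+1}$ and then verify this is $\ge\tfrac{r(r+1)}{2}$ by checking the endpoints $j=2$ and $j=r-1$; and for $e_i+e_j+\bm{1}$ with both indices interior the bound is even less immediate. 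The claim is believable and almost certainly true (one really only needs the weaker bound $a_{r-1}(\bm{m})>\tfrac{3r}{2}$), but as written it is a nontrivial assertion left without proof, and it is precisely this assertion the paper's Cauchy--Schwarz argument is designed to sidestep. Similarly, your large-$\Delta$ case is only sketched; one still must pin down $\tfrac{2}{r}L(\bm{k}-\bm{1})\le C''_r\,k_r$ with an explicit $r$-dependent $C''_r$ (using $\Delta\le(r-1)(k_r-1)$) and then choose $C_r$ so that the growth $a_{r-1}(\bm{k}_{\widehat{r}})\gg(\Delta/(r-1))^{r-1}$ dominates $r!\cdot\tfrac{3}{2(r-1)!}\cdot (C''_r)^r$. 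These are fillable but not filled.
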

\begin{proof}
If $k_j = \max\{\bm{k}\}$ with $1 < j < r$, then the result follows from
Lemma \ref{lem:height_dim_representation_bounds} and \eqref{eq:dimension_height_rough_bound}.
So recalling the symmetry under $(k_1,\ldots,k_r) \leftrightarrow (k_r,\ldots,k_1)$,
it is enough to prove the result if $\max\{\bm{k}\}$ is attained by $k_1$.

For notational convenience, we shift $\bm{k}$ by $\bm{1}$ and prove that for all $\bm{k} \in \IN_0^r$ satisfying $k_1 = \max\{\bm{k}\}$,
\begin{equation*}
(k_2, \ldots, k_r) \not\in \{(0,\ldots,0),(1,0,\ldots,0),(0,1,0,\ldots,0),
\ldots, (0,\ldots,0,1) \},
\end{equation*}
and $k_1 \gg 1$ we have
\begin{equation}\label{eq:lem_height_representation_dominant_weights_shifted}
I(\bm{k}) := \frac{1}{r} \frac{r!  a(\bm{k}+\bm{1})}{\lp \frac{2}{r} L(\bm{k}) \rp^r} \geq \frac{3}{2}.
\end{equation}
Here it is useful to note that
\begin{equation}\label{eq:lem_height_representation_dimension}
r!  a(\bm{k}+\bm{1}) =
(k_1+1) (k_1+k_2+2) \cdots (k_1+\ldots+k_r+r)
\prod_{2 \leq \ell \leq j \leq r}
\lp 1 + \frac{k_\ell+\ldots+k_j}{j-\ell+1} \rp .
\end{equation}
For the case $r=2$, the inequality \eqref{eq:lem_height_representation_dominant_weights_shifted} is not hard to establish for $k_1,k_2 \geq 2$ with an explicit computation. So we also assume $r \geq 3$ from now on and define 
\begin{equation*}
\kappa_{m} := \max\{k_2,\ldots,k_r\}.
\end{equation*}
We have $1 \leq \kappa_{m} \leq k_1$ according to our assumptions for \eqref{eq:lem_height_representation_dominant_weights_shifted}. With this notation, we bound~$L(\bm{k})$~as
\begin{equation}\label{eq:lem_height_representation_height_bound0}
\frac{2}{r} L(\bm{k}) = \frac{1}{r} \sum_{j=1}^r j (r+1-j) k_j
\leq k_1 + r^2 \kappa_{m}.
\end{equation}
To bound $a(\bm{k}+\bm{1})$, we distinguish two cases depending on whether $\kappa_{m} = 1$ or not.

\noindent\textbf{Case 1:} $\kappa_{m} = 1$ and more than one of $k_2,\ldots,k_r$ are equal to one. Assuming that $k_{n_1} = k_{n_2} = 1$ for some $2 \leq n_1 < n_2 \leq r$ we bound
\begin{align*}
&\!\prod_{2 \leq \ell \leq j \leq r}\!
\!\lp 1 \!+\! \frac{k_\ell+\ldots+k_j}{j-\ell+1} \rp\! \geq
(1+k_{n_1}) \!\prod_{2 \leq \ell \leq n_2}\! \!\lp\! 1 \!+\! \frac{k_\ell+\ldots+k_{n_2}}{n_2-\ell+1} \rp\! \prod_{n_2<j \leq r}\! \!\lp 1 + \frac{k_2+\ldots+k_j}{j-1} \rp\!
\\
&\hspace{4cm}\geq 2
\prod_{2 \leq \ell \leq n_2}  \lp 1 + \frac{1}{n_2-\ell+1} \rp
\prod_{n_2<j \leq r} \lp 1 + \frac{1}{j-1} \rp
=
2 \prod_{j=1}^{r-1} \lp 1 + \frac{1}{j} \rp = 2r .
\end{align*}
With equation \eqref{eq:lem_height_representation_dimension}, we then find
$r!  a(\bm{k}+\bm{1}) \geq  2r k_1^r$, which together with \eqref{eq:lem_height_representation_height_bound0}
shows that
\begin{equation*}
I (\bm{k}) \geq \frac{2 k_1^r}{\lp k_1 + r^2 \rp^r}  .
\end{equation*}
The right-hand side tends to $2$ as $k_1 \to \infty$ and hence $I (\bm{k}) \geq \frac{3}{2}$ for $k_1$ sufficiently large.

\noindent\textbf{Case 2:} $2 \leq \kappa_{m} \leq k_1$.
Supposing that $\kappa_{m}$ is attained at position $n$, we bound
\begin{equation*}
\prod_{2 \leq \ell \leq j \leq r} \!\!\!
\lp 1 + \frac{k_\ell+\ldots+k_j}{j-\ell+1} \rp
\! \geq  \!
\prod_{2 \leq \ell \leq n} \!\!\! \lp 1 + \frac{k_\ell+\ldots+k_n}{n-\ell+1} \rp \prod_{n<j \leq r} \!\!\! \lp 1 + \frac{k_2+\ldots+k_j}{j-1} \rp
\! \geq  \!
\prod_{j=1}^{r-1} \!\lp 1 + \frac{\kappa_{m}}{j} \rp \! .
\end{equation*}
Using this in \eqref{eq:lem_height_representation_dimension} together with the bound
\eqref{eq:lem_height_representation_height_bound0} yields
\begin{equation}\label{eq:lem_height_representation_level_bound_prep2}
I(\bm{k}) \geq  \frac{1}{r}
\frac{k_1^r}{(k_1 + r^2 \kappa_{m})^r}
\prod_{j=1}^{r-1} \lp 1 + \frac{\kappa_{m}}{j} \rp .
\end{equation}
If $2 \leq \kappa_{m} \leq 34$, then we have
\begin{equation*}
\prod_{j=1}^{r-1} \lp 1 + \frac{\kappa_{m}}{j} \rp \geq
\prod_{j=1}^{r-1} \lp 1 + \frac{2}{j} \rp = \frac{r(r+1)}{2}
\quad \mbox{and hence }
I(\bm{k}) \geq
\frac{r+1}{2} \frac{k_1^r}{(k_1 + 34r^2)^r} .
\end{equation*}
Since $\frac{r+1}{2} \geq 2$, the inequality \eqref{eq:lem_height_representation_dominant_weights_shifted} is then satisfied in this case for $k_1$ sufficiently large.

So we finally consider the case $35 \leq \kappa_{m} \leq k_1$.
We start by bounding (with $1 \leq j \leq r$ arbitrary)
\begin{equation*}
k_1 + r^2 \kappa_{m}
\leq
\sqrt{k_1^2 + r^4 j \kappa_{m}}
\sqrt{1 + \frac{\kappa_{m}}{j}}
\leq
\left(k_1 + r^5\right) \sqrt{1 + \frac{\kappa_{m}}{j}},
\end{equation*}
where we use the Cauchy-Schwarz inequality in the first step
with $\kappa_{m} \leq k_1$ and $j \leq r$ in the second step.
We apply this bound on the $r$ factors in the denominator of \eqref{eq:lem_height_representation_level_bound_prep2} by employing it with each of $j \in \{1,2,\ldots, r-2\}$ once and twice with $j = r-1$. This yields
\begin{equation*}
I(\bm{k}) \geq  \frac{k_1^r}{(k_1 + r^5)^r}
\sqrt{\frac{1}{r^2} \prod_{j=1}^{r-2} \lp 1 + \frac{\kappa_{m}}{j} \rp}
\geq
\frac{k_1^r}{(k_1 + r^5)^r}
\sqrt{\frac{1}{r^2} \prod_{j=1}^{r-2} \lp 1 + \frac{35}{j} \rp}.
\end{equation*}
Since the factor in the square root is $\geq 4$ for $r \geq 3$, the inequality \eqref{eq:lem_height_representation_dominant_weights_shifted} is satisfied for $k_1$ sufficiently large in this case as well.
\end{proof}

As our last preparatory step, we give bounds for vectors in $\Lambda_2$ (see \eqref{eq:Qqn_height_distribution_Lambda_set_definition} for the definition).
\begin{lem}\label{lem:height_representation_lambda2_bounds}
For any $\bm{k} := (k,1,\ldots,1,2,1,\ldots,1)$ or $(1,\ldots,1,2,1,\ldots,1,k) \in \LL_2$ we have
\begin{equation*}
a(\bm{k}) \geq \frac{k^r}{(r-1)!}
\andd
\frac{2}{r} L(\bm{k}-\bm{1}) \leq k+r .
\end{equation*}
\end{lem}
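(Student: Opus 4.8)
The plan is to work directly from the explicit formula $c_r\,a(\bm{k})=\prod_{1\le\ell\le j\le r}(k_\ell+\dots+k_j)$ in \eqref{E:akDef}, together with the reversal symmetry $(k_1,\dots,k_r)\leftrightarrow(k_r,\dots,k_1)$, under which both $a(\bm{k})$ and $L(\bm{k}-\bm{1})$ are invariant (as already exploited in the proof of Lemma \ref{lem:height_representation_dominant_weights}). This symmetry reduces both claims to the single case $\bm{k}=(k,1,\dots,1,2,1,\dots,1)$ with the entry $2$ in some position $p\in\{2,\dots,r\}$.

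For the dimension bound I would split the product $c_r\,a(\bm{k})$ into the factors with $\ell=1$ and those with $\ell\ge 2$. Since $k_i\ge 1$ for all $i$, every partial sum $k_1+\dots+k_j$ is at least $k_1=k$, so $\prod_{j=1}^r(k_1+\dots+k_j)\ge k^r$. After shifting indices $\ell\mapsto\ell-1,\ j\mapsto j-1$, the factors with $\ell\ge 2$ equal $\prod_{1\le a\le b\le r-1}(v_a+\dots+v_b)$ where $\bm{v}=(k_2,\dots,k_r)\in\IN^{r-1}$ is the vector that is $1$ in every coordinate except a $2$ in coordinate $q:=p-1\in\{1,\dots,r-1\}$. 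The key step is to evaluate this: using $v_a+\dots+v_b=(b-a+1)+\mathbbm{1}[a\le q\le b]$ one gets $\prod_{1\le a\le b\le r-1}(v_a+\dots+v_b)=c_{r-1}\prod_{1\le a\le q\le b\le r-1}\bigl(1+\tfrac{1}{b-a+1}\bigr)$, where $\prod_{1\le a\le b\le r-1}(b-a+1)=c_{r-1}$; carrying out the $b$-product for each fixed $a$ telescopes to $\tfrac{r-a+1}{q-a+1}$, and the product over $a\in\{1,\dots,q\}$ gives $\binom{r}{q}=\tfrac{r!}{q!(r-q)!}$ (consistent with $a_{r-1}(\bm{v})$ being the dimension $\binom{r}{q}$ of the $\mathfrak{sl}_r(\IC)$-representation $\Lambda^q(\IC^r)$ with highest weight $\bm{\lambda}_q$). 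Since $1\le q\le r-1$ forces $\binom{r}{q}\ge r$, and since $c_r=r!\,c_{r-1}$, we obtain $c_r\,a(\bm{k})\ge k^r\cdot r\,c_{r-1}=\tfrac{r}{r!}\,c_r\,k^r$, i.e.\ $a(\bm{k})\ge\tfrac{k^r}{(r-1)!}$.

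For the height bound, $\bm{k}-\bm{1}$ has coordinate $k-1$ in position $1$, coordinate $1$ in position $p$, and $0$ elsewhere, so \eqref{eq:height_l_function_defn} gives $\tfrac{2}{r}L(\bm{k}-\bm{1})=\tfrac{1}{r}\bigl(r(k-1)+p(r+1-p)\bigr)=(k-1)+\tfrac{p(r+1-p)}{r}$. Since $p\le r$ and $r+1-p\le r$ we have $p(r+1-p)\le r^2$, hence $\tfrac{2}{r}L(\bm{k}-\bm{1})\le(k-1)+r\le k+r$. The reverse-form case then follows from the symmetry noted at the outset.

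I expect the only genuinely substantive point to be the evaluation (or, if one prefers, the lower bounding by $r\,c_{r-1}$) of the sub-product $\prod_{1\le a\le b\le r-1}(v_a+\dots+v_b)$ on the $S_2$-vectors; the rest is elementary bookkeeping with $c_r=r!(r-1)!\cdots 1!$ and the identity $c_r=r!\,c_{r-1}$, and no largeness hypothesis on $k$ is needed.
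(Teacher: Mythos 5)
Your proof is correct and follows essentially the same route as the paper: both use the reversal symmetry to reduce to the case with the $2$ in position $p\in\{2,\dots,r\}$, both bound the dimension by splitting the product from \eqref{eq:lem_height_representation_dimension} into the factors with $\ell=1$ (each $\geq k$) and the remaining block, which telescopes to $\binom{r}{p-1}\ge r$, and both compute $L(\bm{k}-\bm{1})$ directly and bound $p(r+1-p)$. The only cosmetic difference is that you bound $p(r+1-p)\le r^2$ by crudely bounding each factor, while the paper uses $p(r+1-p)\le\tfrac{(r+1)^2}{4}\le r^2$; your explicit telescoping computation and the $\Lambda^q(\IC^r)$ interpretation make the dimension argument more self-contained than the paper's terse reference back to Lemma~\ref{lem:height_representation_dominant_weights}.
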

\begin{proof}
Let $\bm{k} := (k,1,\ldots,1,2,1,\ldots,1)$ with $k_n = 2$ for some $2 \leq n \leq r$;
the other case follows from the symmetry
$(k_1,\ldots,k_r) \leftrightarrow (k_r,\ldots,k_1)$.
Then we have
\begin{equation*}
\frac{2}{r} L(\bm{k}-\bm{1}) = \frac{1}{r} \sum_{j=1}^r j (r+1-j) (k_j-1)
= \frac{1}{r} \lp r (k-1) + n(r+1-n) \rp \leq k-1 + \frac{(r+1)^2}{4r} ,
\end{equation*}
proving the second claim since $\frac{(r+1)^2}{4r} \leq r$.
The first one follows from \eqref{eq:lem_height_representation_dimension} as in~Lemma~\ref{lem:height_representation_dominant_weights}.
\end{proof}

We are now ready to give the height distribution
with $\a_n = \GG (r+1)  \log ( 2 \GG (r) s_n^{-\frac{r+1}{2}} )$ from \eqref{eq:introduction_C_r_alpha_n_definitions}.

\begin{thm}\label{T:Height}
For any $x \in \IR$ we have
\begin{equation*}
\lim_{n \to \infty} P_n \!\lp
\frac{2}{r!} s_n^{\frac{r+1}{2}} \a_n^{\frac{r-1}{r}} H - \frac{\a_n}{(r-1)!}
+\frac{r-1}{r} \log (\a_n) \leq x \rp = e^{-e^{-x}}.
\end{equation*}
\end{thm}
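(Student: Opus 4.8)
The strategy mirrors that of Theorem~\ref{T:lgstajk}: first reduce to a statement about the Boltzmann model via equivalence of ensembles, then perform an asymptotic analysis of the relevant tail sum. Writing the claimed normalization as $H \leq m_{x,n}$ with
\begin{equation*}
m_{x,n} := \frac{r!}{2} s_n^{-\frac{r+1}{2}} \a_n^{-\frac{r-1}{r}}
\lp \frac{\a_n}{(r-1)!} - \frac{r-1}{r}\log(\a_n) + x \rp,
\end{equation*}
the event $H \leq m_{x,n}$ is equivalent to $X_{\bm{k}} = 0$ for all $\bm{k}$ in
$I_n := \{\bm{k} \in \IN^r : L(\bm{k}-\bm{1}) > m_{x,n}\}$. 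Since $m_{x,n} \to \infty$, the bound \eqref{eq:dimension_height_rough_bound} gives $a(\bm{k}) \gg m_{x,n}^r \to \infty$ uniformly on $I_n$, so Corollary~\ref{C:EquivalenceofEnsembles}~(4) applies and it suffices to prove $\lim_{n\to\infty} Q_{q_n}(H \leq m_{x,n}) = e^{-e^{-x}}$.

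\textbf{Reduction to $\LL_1$ and $\LL_2$.} By independence and the geometric law (Proposition~\ref{P:BoltzmannSimpleProperties}~(2)),
\begin{equation*}
-\log Q_{q_n}(H \leq m_{x,n}) = -\sum_{\bm{k} \in I_n} \log\lp 1 - q_n^{a(\bm{k})} \rp,
\end{equation*}
and since $a(\bm{k})$ is large on $I_n$ this is $\sum_{\bm{k} \in I_n} q_n^{a(\bm{k})} + O(\sum_{\bm{k} \in I_n} q_n^{2a(\bm{k})})$, the error being handled as in Theorem~\ref{T:lgstajk}. I split the main sum according to the partition $\IN^r = \LL_1 \cup \LL_2 \cup \LL$ from \eqref{eq:Qqn_height_distribution_Lambda_set_definition}. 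On $\LL$, Lemma~\ref{lem:height_representation_dominant_weights} gives $a(\bm{k}) \geq \frac{3}{2(r-1)!}(\frac{2}{r}L(\bm{k}-\bm{1}))^r$, so $q_n^{a(\bm{k})} \leq \exp(-\frac{3}{2}\frac{s_n^{r(r+1)/2}}{(r-1)!}(\frac{2}{r}m_{x,n})^r)$ times a factor absorbing the lattice-point count; because $s_n^{r(r+1)/2} m_{x,n}^r \sim \frac{r!}{2}\cdot(\tfrac{r}{2})^{-r}\,\a_n$ (up to the constant), the extra factor of $\frac{3}{2}$ in the exponent makes the whole $\LL$-contribution $\exp(-\frac{3}{2}\a_n(1+o(1)))$, which is $o(1)$ since $\a_n \to \infty$. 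Similarly the $\LL_2$-contribution, using Lemma~\ref{lem:height_representation_lambda2_bounds} and the fact that $\LL_2$ consists of $O(1)$ ``rays'' each contributing a geometric-type sum with ratio comparable to $q_n^{k^r/(r-1)!}$, is again a power $\exp(-c\,\a_n)$ with $c > 1$, hence negligible. Thus only $\LL_1$ survives.

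\textbf{The $\LL_1$ sum.} On $\LL_1$ the relevant vectors are $(k,1,\ldots,1)$ and its reverse (the two all-ones tails), and for these $a(\bm{k}) = \frac{k(k+1)\cdots(k+r-1)}{r!} = \frac{1}{r!}\prod_{i=0}^{r-1}(k+i)$ while $L(\bm{k}-\bm{1}) = \frac{r}{2}(k-1)$, so $L(\bm{k}-\bm{1}) > m_{x,n}$ means $k > \frac{2}{r}m_{x,n} + 1 =: \kappa_{x,n}$. The contribution of the two rays (they give the same value, hence a factor $2$) is
\begin{equation*}
2 \sum_{k > \kappa_{x,n}} q_n^{\frac{1}{r!}\prod_{i=0}^{r-1}(k+i)}.
\end{equation*}
Writing $q_n = \exp(-s_n^{r(r+1)/2})$ and using $\prod_{i=0}^{r-1}(k+i) = k^r(1 + O(1/k))$, the exponent at the smallest admissible $k \approx \kappa_{x,n}$ is $\frac{s_n^{r(r+1)/2}}{r!}\kappa_{x,n}^r(1+o(1))$; by the definition of $\a_n$ one checks that this equals $\a_n/(r-1)! - \frac{r-1}{r}\log\a_n + x$ up to $o(1)$ — indeed the normalization constants $a_n^{[H]}, b_n^{[H]}$ were reverse-engineered precisely so this holds. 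The sum over $k$ is dominated by its first few terms and can be compared to a Gaussian/Laplace-type sum: the ratio of consecutive terms is $\exp(-\frac{s_n^{r(r+1)/2}}{r!}(r\,\kappa_{x,n}^{r-1})(1+o(1))) = \exp(-\frac{r-1}{r}\cdot\frac{\a_n}{\kappa_{x,n}}(1+o(1))) \to 1$ very slowly, while the second difference of the exponent is positive, so Euler–Maclaurin (or direct comparison with an integral, as in the Erd\H{o}s–Lehner argument) converts $\sum_{k>\kappa_{x,n}} q_n^{a(\bm{k})}$ into $\frac{\kappa_{x,n}}{(r-1)\a_n/r}\,q_n^{a((\kappa_{x,n},1,\ldots,1))}(1+o(1))$, and after substituting the asymptotics for $\kappa_{x,n}$ and the exponent, the $\a_n$-powers and $\log\a_n$ terms cancel against those built into $m_{x,n}$, leaving exactly $\tfrac12 e^{-x}(1+o(1))$ per ray, i.e.\ $e^{-x}(1+o(1))$ in total. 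The analogous computation shows $\sum_{\bm{k}\in I_n} q_n^{2a(\bm{k})} = o(1)$. Hence $-\log Q_{q_n}(H \leq m_{x,n}) = e^{-x}(1+o(1)) + o(1)$, which gives \eqref{E:MaxDimLimitQn}-type convergence to $e^{-e^{-x}}$ and completes the proof.

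\textbf{Main obstacle.} The delicate point is the $\LL_1$ asymptotic: one must track the subleading $\log\a_n$ and constant terms through the Euler–Maclaurin step precisely enough to see the exact cancellation with $m_{x,n}$, since a sloppy estimate would only yield $-\log Q_{q_n} \asymp e^{-x}$ rather than the sharp constant. The bookkeeping is made manageable by the fact that $\prod_{i=0}^{r-1}(k+i)$ differs from $k^r$ by lower-order terms that contribute only to the $o(1)$, but care is needed because $\a_n$ grows only logarithmically in $s_n^{-1}$, so ``$o(1)$'' errors in the exponent must genuinely be $o(1)$ and not merely $o(\a_n)$.
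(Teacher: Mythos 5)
Your proposal follows exactly the route of the paper's proof: reduce to the Boltzmann measure via Corollary~\ref{C:EquivalenceofEnsembles}~(4), split over the paper's own partition $\LL_1 \cup \LL_2 \cup \LL$ from \eqref{eq:Qqn_height_distribution_Lambda_set_definition}, discard $\LL$ and $\LL_2$ using Lemmas~\ref{lem:height_representation_dominant_weights} and~\ref{lem:height_representation_lambda2_bounds}, and compute the $\LL_1$ sum by replacing $\frac{1}{r!}k(k+1)\cdots(k+r-1)$ by $\frac{1}{r!}k^r$ and applying Euler summation to convert the tail sum to an incomplete gamma (which you phrase as a Laplace/ratio estimate, but the two formulations are equivalent).

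Several constants in your sketch are off, though none change the strategy: the exponent is $\frac{s_n^{r(r+1)/2}}{r!}\kappa_{x,n}^r = \frac{\a_n}{r!} - \frac{r-1}{r}\log\a_n + x + o(1)$, not $\a_n/(r-1)! - \cdots$ (a factor-of-$r$ slip); the Euler--Maclaurin prefactor is $(r-1)!\kappa_{x,n}/\a_n$ rather than $r\kappa_{x,n}/((r-1)\a_n)$; and once the lattice-count prefactor $s_n^{-r(r+1)/2}\asymp e^{\a_n/(r-1)!}$ is paid, the $\LL$-contribution decays like $e^{-\a_n/(2(r-1)!)(1+o(1))}$ and the $\LL_2$-contribution like $e^{-\a_n/r!(1+o(1))}$ --- positive, but not necessarily $>1$, constants in front of $\a_n$. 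All that is actually needed is that $\LL$ and $\LL_2$ contribute $e^{-c\a_n(1+o(1))}$ for some $c>0$ and that the $\LL_1$ computation cancels exactly to $e^{-x}$, and your plan with corrected arithmetic delivers that. One further nit: the hypothesis of Corollary~\ref{C:EquivalenceofEnsembles}~(4) is that $a(s_n\bm{k})\to\infty$, not $a(\bm{k})\to\infty$; the rescaled check $a(s_n\bm{k})\gg (s_n^{(r+1)/2}m_{x,n})^r \asymp \a_n\to\infty$ is the one you need, and it does hold.
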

\begin{proof}
The theorem statement can be rewritten as $\lim_{n \to \infty} P_n(H \leq \ell_{x,n}) = e^{-e^{-x}}$ with
\begin{equation}\label{eq:Qqn_height_ell_x_n_definition}
\ell_{x,n} := \frac{r}{2} s_n^{-\frac{r+1}{2}} \a_n^{\frac{1}{r}}
\lp 1 + \GG(r) \frac{x - \frac{r-1}{r} \log (\a_n)}{\a_n} \rp .
\end{equation}
From now on, we assume that $n$ is sufficiently large so that $\a_n > 0$ and $\ell_{x,n} > 0$.
As in Theorem \ref{T:lgstajk}, we first note that the event $H \leq \ell_{x,n}$ can be equivalently written as
\begin{equation*}
X_{\bm{k}} = 0 \quad \mbox{for all }
\bm{k} \in I_n := \{\bm{k} \in \IN^r : \ L(\bm{k}-\bm{1}) > \ell_{x,n} \}
\end{equation*}
and can be studied with the marginal distribution for the random variable $\bm{X}_{I_n}$.
Now recall that $a(\bm{k}) \gg L(\bm{k}-\bm{1})^{r}$ by our remarks surrounding
\eqref{eq:dimension_height_rough_bound}. So for $\bm{k} \in I_n$, we have
\begin{equation}\label{eq:Qqn_height_infinite_limit_exponents_bound}
a(s_n \bm{k}) \gg \lp s_n^{\frac{r+1}{2}} \ell_{x,n} \rp^r .
\end{equation}
Since $\a_n\to\infty$ as $n \to \infty$, we have
\begin{equation}\label{eq:Qqn_height_infinite_limit_exponents}
\lim_{n \to \infty} \!\lp s_n^{\frac{r+1}{2}} \ell_{x,n} \rp
= \infty.
\end{equation}
So the set $I_n$ satisfies Corollary \ref{C:EquivalenceofEnsembles} (4) and it is enough to prove the claimed limit for $Q_{q_n}$ as\footnote{
In fact, as in Theorem \ref{T:lgstajk} one can prove a stronger result saying that
\begin{equation*}
-\log\!\left(Q_{q_n}(H\leq \ell_{x_n,n})\right) = e^{-x_n}(1+o(1))+o(1)
\mbox{ as } n \to \infty
\end{equation*}
for any sequence of real numbers $x_n$ such that
$- \a_n^{\frac{1}{4}} \leq x_n \leq \a_n^{\frac{1}{4}}$
for all $n$.
}
\begin{equation}\label{eq:Qqn_height_distribution_asymptotic_goal}
\lim_{n \to \infty} Q_{q_n} \!\left(H \leq\ell_{x,n}\right)=e^{-e^{-x}} .
\end{equation}

We start our discussion with
\begin{equation*}
- \log \!\lp Q_{q_n} (H \leq \ell_{x,n})  \rp
= - \sum_{\substack{\bm{k} \in \IN^r \\ L(\bm{k}-\bm{1}) > \ell_{x,n}}}
\log\!\lp 1 - q_n^{a (\bm{k})} \rp .
\end{equation*}
Thanks to the bound \eqref{eq:Qqn_height_infinite_limit_exponents_bound} for the terms contributing to this sum and the limit \eqref{eq:Qqn_height_infinite_limit_exponents},
we can assume that $n$ is sufficiently large such that $q_n^{a (\bm{k})} \in [0,\frac{1}{2}]$ for all the terms in the sum.
In particular, recalling the inequality $t \leq - \log (1-t) \leq t+t^2$ for $t \in [0,\frac{1}{2}]$ as remarked in the proof of Theorem \ref{T:lgstajk}
and separating the contributions from $\LL_1$, $\LL_2$, and $\LL$ defined in \eqref{eq:Qqn_height_distribution_Lambda_set_definition}, we find
\begin{equation*}
- \log \!\lp Q_{q_n} (H \leq \ell_{x,n})  \rp
=
\sum_{\substack{\bm{k} \in \LL_1 \\ L(\bm{k}-\bm{1}) > \ell_{x,n}}} \!\!\!\!\!\!
q_n^{a (\bm{k})}
+ O\!\lp
\sum_{\substack{\bm{k} \in \LL_1 \\ L(\bm{k}-\bm{1}) > \ell_{x,n}}}  \!\!\!\!\!\!
q_n^{2a (\bm{k})}
+
\sum_{\substack{\bm{k} \in \LL_2 \\ L(\bm{k}-\bm{1}) > \ell_{x,n}}}  \!\!\!\!\!\!
q_n^{a (\bm{k})}
+
\sum_{\substack{\bm{k} \in \LL \\ L(\bm{k}-\bm{1}) > \ell_{x,n}}}  \!\!\!\!\!\!
q_n^{a (\bm{k})}
\rp .
\end{equation*}
Here we note that for $\bm{k} \in \LL_1$, i.e., vectors of the form $\bm{k} = (k,1,\ldots,1)$ or $\bm{k} = (1,\ldots,1,k)$, we have
\begin{equation*}
a(\bm{k}) = \frac{k(k+1)\cdots (k+r-1)}{r!}
\andd
L(\bm{k}-\bm{1}) = \frac{r}{2} (k-1) .
\end{equation*}
So we have
\begin{equation*}
\sum_{\substack{\bm{k} \in \LL_1 \\ L(\bm{k}-\bm{1}) > \ell_{x,n}}}
q_n^{a (\bm{k})}
=
2 \!\! \sum_{\substack{k \in \IN \\ k > \frac{2}{r} \ell_{x,n} + 1}} \!\!\!\!
q_n^{\frac{1}{r!} k(k+1)\cdots (k+r-1)}
\mbox{ and }
\sum_{\substack{\bm{k} \in \LL_1 \\ L(\bm{k}-\bm{1}) > \ell_{x,n}}}
q_n^{2a (\bm{k})} \leq
2 \sum_{\substack{k \in \IN \\ k > \frac{2}{r} \ell_{x,n} + 1}}
q_n^{\frac{2}{r!} k^r} .
\end{equation*}
Moreover, by Lemma \ref{lem:height_representation_lambda2_bounds} (and using that $r \geq 2$ to further bound $a(\bm{k}) \geq \frac{2}{r!} k^r$) we find
\begin{equation*}
\sum_{\substack{\bm{k} \in \LL_2 \\ L(\bm{k}-\bm{1}) > \ell_{x,n}}}
q_n^{a (\bm{k})}
\leq
2 (r-1) \sum_{\substack{k \in \IN \\ k > \frac{2}{r} \ell_{x,n} - r}}
q_n^{\frac{2}{r!} k^r} .
\end{equation*}
So these facts lead to
\begin{equation}\label{eq:Qqn_height_distribution_three_terms}
- \log \!\lp Q_{q_n} (H \leq \ell_{x,n})  \rp
=
2 \!\! \sum_{\substack{k \in \IN \\ k > \frac{2}{r} \ell_{x,n} + 1}} \!\!\!\!\!\!
q_n^{\frac{1}{r!} k(k+1)\cdots (k+r-1)}
+
O \!\lp \sum_{\substack{k \in \IN \\ k > \frac{2}{r} \ell_{x,n} - r}}
q_n^{\frac{2}{r!} k^r}
+\!\!\!\!
\sum_{\substack{\bm{k} \in \LL \\ L(\bm{k}-\bm{1}) > \ell_{x,n}}} \!\!\!\!\!\!
q_n^{a (\bm{k})} \rp .
\end{equation}

We next study \eqref{eq:Qqn_height_distribution_three_terms} and start with the main term.
Using $k^r \leq k (k+1) \cdots (k+r-1) \leq (k+r)^r$ and the fact that
for $k > \frac{2}{r} \ell_{x,n}$ each summand is $o(1)$ by \eqref{eq:Qqn_height_infinite_limit_exponents}, we obtain
\begin{equation}\label{eq:Qqn_height_distribution_main_term_dropped_summands}
\sum_{\substack{k \in \IN \\ k > \frac{2}{r} \ell_{x,n} + 1}} \!\!\!\!
q_n^{\frac{1}{r!} k(k+1)\cdots (k+r-1)}
=
\sum_{\substack{k \in \IN \\ k > \frac{2}{r} \ell_{x,n}}} \!\!\!\!
q_n^{\frac{1}{r!} k^r}
+ o(1).
\end{equation}
Now using Euler's summation formula on this remaining sum, employing \eqref{eq:Qqn_height_infinite_limit_exponents} to bound the error terms, and recalling the definition of incomplete gamma functions in \eqref{eq:incomplete_gamma_definition} we get
\begin{equation*}
\sum_{\substack{k \in \IN \\ k > \frac{2}{r} \ell_{x,n} + 1}} \!\!\!\!
q_n^{\frac{1}{r!} k(k+1)\cdots (k+r-1)}
=
\frac{r!^{\frac{1}{r}}}{r} s_n^{-\frac{r+1}{2}}
\GG\!\lp \frac{1}{r}, \frac{1}{r!} \lp \frac{2}{r} \ell_{x,n} s_n^{\frac{r+1}{2}}\rp^r \rp
+o(1) .
\end{equation*}
Using the asymptotic behavior of incomplete gamma functions from \eqref{eq:incomplete_gamma_asymptotic_behavior} then yields
\begin{equation*}
2\!\!\!\!\sum_{\substack{k \in \IN \\ k > \frac{2}{r} \ell_{x,n} + 1}} \!\!\!\!
q_n^{\frac{1}{r!} k(k+1)\cdots (k+r-1)}
=
2\GG(r) s_n^{-\frac{r+1}{2}} \lp \frac{2}{r} \ell_{x,n} s_n^{\frac{r+1}{2}}\rp^{-(r-1)}
e^{-\frac{1}{r!} \lp \frac{2}{r} \ell_{x,n} s_n^{\frac{r+1}{2}}\rp^r}
(1+o(1))
+o(1) .
\end{equation*}
Recalling the definition of
$\a_n$ from \eqref{eq:introduction_C_r_alpha_n_definitions}
and of $\ell_{x,n}$ from \eqref{eq:Qqn_height_ell_x_n_definition}, we have
\begin{equation}\label{eq:Qqn_height_I_x_n_definition}
\CI_n :=
- \log \!\lp
2\GG(r) s_n^{-\frac{r+1}{2}} \lp \frac{2}{r} \ell_{x,n} s_n^{\frac{r+1}{2}}\rp^{-(r-1)}
e^{-\frac{1}{r!} \lp \frac{2}{r} \ell_{x,n} s_n^{\frac{r+1}{2}}\rp^r} \rp
=
x+o(1) .
\end{equation}
So the main term in \eqref{eq:Qqn_height_distribution_three_terms} tends to $e^{-x}$ as $n \to \infty$, and to prove \eqref{eq:Qqn_height_distribution_asymptotic_goal} we only need to show that the remaining two error terms are both $o(1)$.

We start with the easier single variable sum. As in \eqref{eq:Qqn_height_distribution_main_term_dropped_summands}, the summands are $o(1)$ as $n \to \infty$, so
\begin{equation*}
\sum_{\substack{k \in \IN \\ k > \frac{2}{r} \ell_{x,n} - r}}
q_n^{\frac{2}{r!} k^r}
=
\sum_{\substack{k \in \IN \\ k > \frac{2}{r} \ell_{x,n} +1}}
q_n^{\frac{2}{r!} k^r}
+o(1) .
\end{equation*}
Furthermore, the summand is monotonically decreasing in $k$, so we can put an upper bound on the remaining sum using its integral:
\begin{equation*}
\sum_{\substack{k \in \IN \\ k > \frac{2}{r} \ell_{x,n} + 1}}  q_n^{\frac{2}{r!} k^r}
\leq
\int_{\frac{2}{r} \ell_{x,n}}^\infty e^{-\frac{2}{r!} s_n^{\frac{1}{2} r(r+1)} t^r} dt
=
\frac{1}{r} \lp \frac{r!}{2} \rp^{\frac{1}{r}} s_n^{-\frac{r+1}{2}}
\GG\!\lp \frac{1}{r}, \frac{2}{r!} \lp \frac{2}{r} \ell_{x,n} s_n^{\frac{r+1}{2}}\rp^r \rp.
\end{equation*}
So using the asymptotic behavior of incomplete gamma functions in \eqref{eq:incomplete_gamma_asymptotic_behavior} once again and recalling the definition of $\CI_n$ in equation \eqref{eq:Qqn_height_I_x_n_definition}, we obtain
\begin{equation*}
\sum_{\substack{k \in \IN \\ k > \frac{2}{r} \ell_{x,n} + 1}}  q_n^{\frac{2}{r!} k^r}
\ll
s_n^{-\frac{r+1}{2}}
\lp \ell_{x,n} s_n^{\frac{r+1}{2}}\rp^{-(r-1)}
e^{-\frac{2}{r!} \lp \frac{2}{r} \ell_{x,n} s_n^{\frac{r+1}{2}}\rp^r}
\asymp
e^{-\CI_n - \frac{1}{r!} \lp \frac{2}{r} \ell_{x,n} s_n^{\frac{r+1}{2}}\rp^r}.
\end{equation*}
This is $o(1)$ as we want to prove thanks to \eqref{eq:Qqn_height_infinite_limit_exponents} and
\eqref{eq:Qqn_height_I_x_n_definition}.

Our final task in \eqref{eq:Qqn_height_distribution_three_terms} is the sum over $\LL$.
For this purpose, we employ Lemma \ref{lem:height_representation_dominant_weights} supposing that $n$ is sufficiently large so that $\ell_{x,n} > N_r$.
Then noting that $L(\bm{k}-\bm{1})$ assumes integral and half-integral values
and that the number of $\bm{k} \in \LL$ with $k = 2L(\bm{k}-\bm{1})$ is $\ll k^{r-1}$, we find
\begin{equation*}
\sum_{\substack{\bm{k} \in \LL \\ L(\bm{k}-\bm{1}) > \ell_{x,n}}} \!\!\!\!
q_n^{a (\bm{k})}
\ll
\sum_{\substack{k \in \IN \\ k > 2 \ell_{x,n}}}
k^{r-1} q_n^{\frac{3}{2(r-1)!}  \lp \frac{k}{r} \rp^r}  .
\end{equation*}
The summand on the right is monotonically decreasing for $s_n^{\frac{r+1}{2}} k \gg 1$.
So by \eqref{eq:Qqn_height_infinite_limit_exponents}, we can assume that $n$ is sufficiently large such that it is monotonic decreasing for $k \geq 2 \ell_{x,n} - 1$.
Then we bound
\begin{align*}
\sum_{\substack{k \in \IN \\ k > 2 \ell_{x,n}}} k^{r-1}
q_n^{\frac{3}{2(r-1)!} \lp \frac{k}{r} \rp^r}
&\leq
\int_{2 \ell_{x,n}-1}^\infty t^{r-1}
\exp\!\lp -\frac{3}{2}\frac{1}{(r-1)!} s_n^{\frac{r(r+1)}{2} } \lp \frac{t}{r} \rp^r \rp dt
\\
&\ll
s_n^{-\frac{r(r+1)}{2}}
\exp\!\lp - \frac{3}{2} \frac{1}{(r-1)!}
\lp \frac{2}{r} \ell_{x,n} s_n^{\frac{r+1}{2}} \rp^r \rp .
\end{align*}
According to the definition of $\a_n$, we have
\begin{equation*}
(2 \GG(r))^r s_n^{-\frac{r(r+1)}{2} } = \exp\!\lp \frac{\a_n}{(r-1)!} \rp,
\end{equation*}
so noting that
\begin{equation*}
\lp \frac{2}{r} \ell_{x,n} s_n^{\frac{r+1}{2}} \rp^r - \frac{2\a_n}{3}
=
\frac{\a_n}{3}  + \GG(r+1) \lp x -  \frac{r-1}{r} \log (\a_n)  \rp
+ o(1) .
\end{equation*}
tends to infinity (because $\a_n$ does so) proves that
this final term is also $o(1)$, which completes the proof of equation \eqref{eq:Qqn_height_distribution_asymptotic_goal}.
\end{proof}

\section{Limit shape and total number of irreducible representations}\label{S:LimitShapeTotalIrreps}

Our approaches to the limit shape and distribution of the total number of irreducible representations are similar in that, unlike the distributions in Sections \ref{S:SmallMultiplicities} and \ref{S:MaxDimHeight}, we apply the rare events lemma if we cannot readily use equivalence of ensembles.

\subsection{Limit shape}

For $r=1$, a representation $\rho$ corresponds to a partition and a natural visual representation of $\rho$ is its Young diagram. The shape of a Young diagram is described by the random variable $\varphi(t):=\sum_{k \geq t} X_k.$
For $r>1$, we define the analogous shape function
$$\varphi(\bm{t}):=\sum_{\substack{\k \in \N^r \\ k_j \geq t_j}} X_{\k}$$
and use the notation  $\varphi_{\rho}(\bm{t})$ to denote the value of the random variable $\varphi(t)$ at the representation~$\rho$.
To discuss the asymptotic distribution of $\varphi(\bm{t})$ under $P_n$, we first give a technical lemma that we use in approximating logarithms. The proof is elementary and is omitted.
\begin{lem}\label{L:logxqineq}
Let $q_0\in (0,1)$ and $\log(q_0)<x_0<0$.  Then there exists $C=C(x_0,q_0)$ such that
$$
0\leq \log\left(\frac{1-q}{1-e^{-x}q}\right)+\frac{xq}{1-q} \leq C\frac{x^2q}{(1-q)^2} \qquad \text{for $x\geq x_0$ and $q\in[0,q_0]$.}
$$
\end{lem}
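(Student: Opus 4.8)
The plan is to fix $q \in [0,q_0]$ and analyze the single‑variable function $g(x) := \log\bigl(\tfrac{1-q}{1-e^{-x}q}\bigr) + \tfrac{xq}{1-q}$ on $x \in [x_0,\infty)$, since the two claimed inequalities amount to $0 \le g(x) \le C\,\tfrac{x^2q}{(1-q)^2}$. The first thing I would check is that all expressions are well‑defined and smooth on this range: for $x \ge 0$ we have $e^{-x}q \le q \le q_0 < 1$, and the hypothesis $x_0 > \log q_0$ gives $e^{-x_0} < 1/q_0$, so for $x_0 \le x < 0$ we still have $e^{-x}q \le e^{-x_0}q_0 < 1$. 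In particular $1 - e^{-x}q$ stays strictly positive, and $g(0) = 0$. (The degenerate case $q=0$ is trivial since then $g \equiv 0$, so I would assume $q \in (0,q_0]$ below.)

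Next I would differentiate twice: a short computation gives $g'(x) = \tfrac{q}{1-q} - \tfrac{e^{-x}q}{1-e^{-x}q}$ and $g''(x) = \tfrac{e^{-x}q}{(1-e^{-x}q)^2} > 0$. Thus $g$ is convex on $[x_0,\infty)$, and since $g'(0) = 0$, the point $x=0$ is the global minimum; this gives $g(x) \ge g(0) = 0$, the lower bound. For the upper bound I would invoke Taylor's formula with integral remainder, $g(x) = \int_0^x (x-t)\,g''(t)\,dt$, which is valid for $x$ of either sign and whose right‑hand side is manifestly $\ge 0$ (consistent with the lower bound).

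It then remains to bound $g''$ uniformly on the segment between $0$ and $x$ by a constant multiple of $\tfrac{q}{(1-q)^2}$. For $t \ge 0$ this is immediate: $e^{-t} \le 1$ forces $g''(t) = \tfrac{e^{-t}q}{(1-e^{-t}q)^2} \le \tfrac{q}{(1-q)^2}$. For $t \in [x_0,0]$ I would use $e^{-t} \le e^{-x_0}$ together with $1 - e^{-t}q \ge 1 - e^{-x_0}q_0 > 0$ to get $g''(t) \le \tfrac{e^{-x_0}}{(1-e^{-x_0}q_0)^2}\,q$, and then absorb the missing denominator via $(1-q)^2 \le 1$, i.e.\ $q \le \tfrac{q}{(1-q)^2}$. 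Setting $M := \max\bigl\{1,\ e^{-x_0}(1-e^{-x_0}q_0)^{-2}\bigr\}$, we get $g''(t) \le M\,\tfrac{q}{(1-q)^2}$ throughout, and since $\bigl|\int_0^x(x-t)\,dt\bigr| = \tfrac{x^2}{2}$ this yields $g(x) \le \tfrac{M}{2}\,\tfrac{x^2q}{(1-q)^2}$, so $C = M/2$ works.

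There is no real obstacle here; the argument is a one‑variable convexity estimate. The only point requiring care — and the reason the hypotheses read as they do — is the behavior for negative $x$, where $e^{-x}$ grows: one needs $x_0 > \log q_0$ precisely to keep the argument $1 - e^{-x}q$ of the logarithm bounded away from $0$ (and $e^{-x}q$ away from $\infty$) uniformly for $x \ge x_0$ and $q \in [0,q_0]$, which is what makes both the differentiation and the uniform bound on $g''$ legitimate.
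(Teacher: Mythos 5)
Your proof is correct. Since the paper declares the proof elementary and omits it, there is nothing to compare against, but your argument is the natural one: set $g(x):=\log\bigl(\tfrac{1-q}{1-e^{-x}q}\bigr)+\tfrac{xq}{1-q}$, observe $g(0)=g'(0)=0$ and $g''(x)=\tfrac{e^{-x}q}{(1-e^{-x}q)^2}>0$, so convexity gives the lower bound, while the integral form of the Taylor remainder together with the uniform bound $g''(t)\le M\,\tfrac{q}{(1-q)^2}$ on $[x_0,\infty)\times[0,q_0]$ (split at $t=0$, using monotonicity of $u\mapsto u/(1-u)^2$ for $t\ge 0$ and the hypothesis $e^{-x_0}q_0<1$ for $t\in[x_0,0]$) gives the upper bound with $C=M/2$. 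All the computational steps (the two derivatives, the well-definedness, the case $q=0$, the handling of $x<0$ in the remainder integral) check out.
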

\noindent We next show that $\varphi(\bm{t})$ converges in probability to $f_r(\bm{t})$ defined in \eqref{E:limitshapedef}, under appropriate scaling.

\begin{thm}\label{T:Shape}
For any $\varepsilon, \eta>0$, we have
$$
\lim_{n \to \infty} P_n\left(\sup_{\bm{t}\in[\eta,\infty)^r}
\left|s_n^r\varphi\!\left(s_n^{-1}\bm t\right)-f_r(\bm{t})\right| \leq \varepsilon\right) = 1.
$$
\end{thm}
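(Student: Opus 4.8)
The plan is to prove the convergence first under the Boltzmann model $Q_{q_n}$, \emph{with exponentially small error probabilities}, and then transfer to $P_n$ by means of the rare events lemma (Lemma~\ref{lem:ExpSmallPrinciple}). Note that the relevant index set $I_n(\bm t):=\{\bm k\in\IN^r:\ k_j\ge s_n^{-1}t_j\ \text{for all }j\}$ is a ``middle range'' set, so Corollary~\ref{C:EquivalenceofEnsembles} does not apply directly, which is exactly why the rare events lemma is needed. First, using $q_n=\exp(-s_n^{r(r+1)/2})$ and that $a$ is homogeneous of degree $\tfrac{r(r+1)}{2}$, so $q_n^{a(\bm k)}=e^{-a(s_n\bm k)}$, Proposition~\ref{P:BoltzmannSimpleProperties}(2) gives
\[
s_n^r\,\mathrm E_{q_n}\!\lp\varphi(s_n^{-1}\bm t)\rp=s_n^r\!\!\sum_{\substack{\bm k\in\IN^r\\ s_nk_j\ge t_j}}\!\!\frac{e^{-a(s_n\bm k)}}{1-e^{-a(s_n\bm k)}}.
\]
Since $\bm y\mapsto(e^{a(\bm y)}-1)^{-1}$ is nonincreasing in each coordinate and, using $a(\bm y)\gg\max_j y_j$ on $[\eta,\infty)^r$, integrable there, the integral comparison test gives $s_n^r\,\mathrm E_{q_n}(\varphi(s_n^{-1}\bm t))\to f_r(\bm t)$ for each $\bm t\in[\eta,\infty)^r$, with $f_r$ as in \eqref{E:limitshapedef}.

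The main point is concentration under $Q_{q_n}$ with exponentially small tails. For $\bm k\in I_n(\bm t)$ with $\bm t\ge\eta\bm1$ we have $a(s_n\bm k)\ge a(\eta\bm1)>0$, hence $q_n^{a(\bm k)}=e^{-a(s_n\bm k)}\le e^{-a(\eta\bm1)}=:q_0<1$ \emph{uniformly} in $\bm k$ and in $\bm t$. Applying Lemma~\ref{L:logxqineq} with this $q_0$ and a fixed $x_0\in(\log q_0,0)$ (using $\mathrm E_{q_n}e^{sX_{\bm k}}=\tfrac{1-q_n^{a(\bm k)}}{1-e^sq_n^{a(\bm k)}}$, which is finite for $s\le -x_0$) yields, for $|s|\le s_0:=-x_0$,
\[
\log\mathrm E_{q_n}\!\lp e^{s(X_{\bm k}-\mathrm E_{q_n}X_{\bm k})}\rp\le C\,s^2\,\mathrm{Var}_{q_n}(X_{\bm k}),
\]
with $C,s_0>0$ independent of $n,\bm k,\bm t$. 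Moreover the same integral comparison gives $V_n(\bm t):=\mathrm{Var}_{q_n}(\varphi(s_n^{-1}\bm t))\le C's_n^{-r}$ uniformly. By independence of the $X_{\bm k}$ and a Chernoff bound (optimizing the tilt over $|s|\le s_0$), for every $\bm t\in[\eta,\infty)^r$ and every small enough $\delta>0$,
\[
Q_{q_n}\!\lp\bigl|s_n^r\varphi(s_n^{-1}\bm t)-s_n^r\,\mathrm E_{q_n}\varphi(s_n^{-1}\bm t)\bigr|>\delta\rp\le 2\exp\!\lp-c_\delta\,s_n^{-r}\rp,\qquad c_\delta\asymp\delta^2>0,
\]
which is exponentially small since $s_n\to0$.

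Combining the last two displays, for each fixed $\bm t\in[\eta,\infty)^r$ and $n$ large, $Q_{q_n}(|s_n^r\varphi(s_n^{-1}\bm t)-f_r(\bm t)|>\varepsilon)\le 2\exp(-c_\varepsilon s_n^{-r})$; since $s_n^{-r(r+2)/2}\exp(-c_\varepsilon s_n^{-r})\to0$, Lemma~\ref{lem:ExpSmallPrinciple} gives $P_n(|s_n^r\varphi(s_n^{-1}\bm t)-f_r(\bm t)|>\varepsilon)\to0$ for each fixed $\bm t$. To upgrade this to the uniform statement, observe that $\bm t\mapsto s_n^r\varphi(s_n^{-1}\bm t)$ is (for every realization) nonincreasing in each coordinate, while $f_r$ is continuous, nonincreasing in each coordinate, and satisfies $\sup\{f_r(\bm t):\min_jt_j\ge\eta,\ \max_jt_j\ge M\}\to0$ as $M\to\infty$. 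Fix $M>\eta$ making this supremum $<\varepsilon/3$. On the compact box $[\eta,M]^r$, uniform continuity of $f_r$ provides a finite grid such that $f_r$ varies by $<\varepsilon/3$ between adjacent cells; applying the pointwise result at the finitely many grid points (union bound) and interpolating via the monotonicity of both $s_n^r\varphi(s_n^{-1}\cdot)$ and $f_r$ bounds $\sup_{\bm t\in[\eta,M]^r}|s_n^r\varphi(s_n^{-1}\bm t)-f_r(\bm t)|<\varepsilon$ with $P_n$-probability $\to1$. On the complementary region $\{\min_jt_j\ge\eta,\ \max_jt_j\ge M\}$, monotonicity forces $\sup s_n^r\varphi(s_n^{-1}\bm t)=\max_{1\le j_0\le r}s_n^r\varphi(s_n^{-1}\bm s^{(j_0)})$, where $\bm s^{(j_0)}$ has entry $M$ in position $j_0$ and $\eta$ elsewhere; the pointwise result at these $r$ points together with $f_r(\bm s^{(j_0)})<\varepsilon/3$ gives the required bound there as well. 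Combining the two regions completes the proof.

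The step I expect to be the main obstacle is the exponential concentration: a mere Chebyshev estimate would give only $Q_{q_n}(\cdot)=O(s_n^r)$, and $s_n^{-r(r+2)/2}\cdot s_n^r=s_n^{-r^2/2}\to\infty$, so it is too weak to feed into Lemma~\ref{lem:ExpSmallPrinciple}. The exponential bound is available only because $a(s_n\bm k)$ is bounded away from $0$ on $I_n(\bm t)$ for $\bm t\ge\eta\bm1$, making the $X_{\bm k}$ uniformly sub-exponential — this is precisely where the restriction $\bm t\in[\eta,\infty)^r$ and Lemma~\ref{L:logxqineq} enter. (Alternatively one could split $\varphi=\varphi^{(\le M_n)}+\varphi^{(>M_n)}$ with $M_n\to\infty$ and handle the far part via Corollary~\ref{C:EquivalenceofEnsembles}(4), since there $\inf a(s_n\bm k)\to\infty$; but the argument above avoids the split.)
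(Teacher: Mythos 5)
Your proposal is correct, and at its core it is the same strategy as the paper's proof: establish exponential concentration of $s_n^r\varphi(s_n^{-1}\bm t)$ around $f_r(\bm t)$ under $Q_{q_n}$ using Lemma~\ref{L:logxqineq} (made possible precisely because $a(s_n\bm k)$ is bounded below by $a(\eta\bm 1)>0$ over the relevant index set), then transfer to $P_n$ via the rare events lemma. Your observation that Chebyshev alone would be too weak to beat the $s_n^{-r(r+2)/2}$ factor is exactly the reason the paper reaches for a Chernoff-type bound.

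Where you differ from the paper is in the bookkeeping, in two places. First, the paper runs the Chernoff argument combinatorially: it writes $Q_{q_n}(\dim=n,\,\varphi(\bm\kappa)=c)$ via a two-variable generating function, extracts the coefficient with $q=q_n$, $z=e^{-x}$, and applies Lemma~\ref{L:logxqineq} to get a bound that is exponentially small in $s_n^{-r}$ whenever $|c\,s_n^r-f_r(s_n\bm\kappa)|>\varepsilon/2$. You instead tilt $\varphi-\mathrm{E}_{q_n}\varphi$ directly, using the independence of the $X_{\bm k}$ and the uniform bound $\mathrm{Var}_{q_n}(\varphi)\ll s_n^{-r}$. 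These are two faces of the same Chernoff bound, and your probabilistic formulation is arguably cleaner. Second, the uniformization over $\bm t$ is handled differently. The paper notes $\varphi(s_n^{-1}\bm t)=\varphi(\lceil s_n^{-1}\bm t\rceil)$ and then exploits the constraint $\dim=n$ to restrict the lattice points to a finite box $[s_n^{-1}\eta,n]^r$ and the values of $\varphi$ to $\{0,\ldots,n\}$, giving an $O(n^{r+1})$ union bound that the exponential tail absorbs. You instead fix a grid of $O(1)$ points (independent of $n$) via uniform continuity of $f_r$ on a compact box, apply the pointwise result there, and interpolate using the coordinate-wise monotonicity of $s_n^r\varphi(s_n^{-1}\cdot)$ and $f_r$, handling the unbounded region by the decay of $f_r$. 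Both resolutions are valid; yours is a standard Dini-type argument, while the paper's leans more heavily on the combinatorics of the constraint $\dim=n$.

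One small caveat: your closing parenthetical — that one could handle the far part $\varphi^{(>M_n)}$ via Corollary~\ref{C:EquivalenceofEnsembles}(4) — is not by itself a complete alternative, since the middle part $\varphi^{(\le M_n)}$ still concerns $\bm k$ with $a(s_n\bm k)\asymp 1$, which none of the four conditions of that corollary covers; one would still need a concentration/rare-events argument for that piece, so the split does not obviously simplify matters. Since you flagged it as an aside and did not rely on it, this does not affect the correctness of your main argument.
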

\begin{proof}
Let $c \in \mathbb{N}_0$, and $\bm{\kappa} \in \N^r$ with $s_n\bm{\kappa}\in[\eta,\infty)^r$.
Note that $\varphi\!\left(\bm{\kappa}\right)$ is a function of $X_{\k}$ with $\k \in I_{\bm{\kappa},n}$, where 
$$
I_{\bm{\kappa},n}:=\left\{\k\in \mathbb{N}^r : k_j \geq \kappa_j \right\}.
$$
Since $I_{\bm{\kappa},n}$ depends on multiplicities with $a(s_n\bm{k}) \asymp 1$, we cannot employ equivalence of ensembles via Corollary \ref{C:EquivalenceofEnsembles}.
Instead, we use a saddle-point/Chernoff-type bound, together with Lemma \ref{lem:ExpSmallPrinciple}.

We start by noting that
\begin{align*}
\#\!\left\{\rho : \dim (\rho)=n, \ \varphi_{\rho}\!\left(\bm{\kappa}\right)=c \right\}
&=\coef{q^nz^c} \lp \prod_{\k \in I_{\bm{\kappa},n}}\frac{1}{1-zq^{a(\k)}}\prod_{\k \in \N^r \setminus I_{\bm{\kappa},n}}\frac{1}{1-q^{a(\k)}} \rp.
\end{align*}
Now focus on the values $q=q_n$ and $z=e^{-x}$ with $x>-a(s_n\bm{\kappa})$, where the latter ensures that
$$
zq_n^{a(\k)} = e^{-x-a(s_n\k)}  <1 \quad \text{for $\k \in I_{\bm{\kappa},n}$.}
$$
Plugging in these values with an extra factor of $q^{-n} z^{-c}$ we find the bound
\begin{align*}
\#\left\{\rho : \dim (\rho)=n, \ \varphi_{\rho}\!\left(\bm{\kappa}\right)=c\right\}
&\leq q_n^{-n}e^{cx} \prod_{\k \in I_{\bm{\kappa},n}}\frac{1}{1-e^{-x}q_n^{a(\k)}}\prod_{\k \in \N^r \setminus I_{\bm{\kappa},n}}\frac{1}{1-q_n^{a(\k)}},
\end{align*}
so that
$$
Q_{q_n}\!\left(\dim =n, \ \varphi\!\left(\bm{\kappa}\right)=c \right) \leq e^{cx} \prod_{\k \in I_{\bm{\kappa},n}}\frac{1-q_n^{a(\k)}}{1-e^{-x}q_n^{a(\k)}}.
$$
We now use Lemma \ref{L:logxqineq} with $q_0=e^{-a(\eta, \dots, \eta)}\geq e^{-a(s_n\bm{\kappa})}$ and $x_0 = - \frac{1}{2} a(\eta, \dots, \eta)$, and assume from now on that\footnote{Note that this ensures $x > - a(s_n \bm{\kappa})$ because $x_0 > -a(\eta, \dots, \eta) \geq - a(s_n \bm{\kappa})$.} $x\geq x_0$.  Note that for $\k \in I_{\bm{\kappa},n}$ we have $q_n^{a(\k)} \leq q_0$.  Thus,
\begin{equation*}
\log \!\left(Q_{q_n}\!\left(\dim=n, \ \varphi\!\left(\bm{\kappa}\right)=c \right)\right) \leq cx-x\sum_{\k \in I_{\bm{\kappa},n}}\frac{q_n^{a(\k)}}{1-q_n^{a(\k)}}+O\!\left(x^2\sum_{\k \in I_{\bm{\kappa},n}}\frac{q_n^{a(\k)}}{\left(1-q_n^{a(\k)}\right)^2}\right),
\end{equation*}
where the implied constant in the error above depends only on $\eta$.
The two sums above can be approximated with integrals over $\prod_{j=1}^r [s_n\kappa_j,\infty)$ as in the proof of Proposition \ref{P:SaddlePointAsympGrowth} to get
$$
\sum_{\k \in I_{\bm{\kappa},n}}\frac{q_n^{a(\k)}}{1-q_n^{a(\k)}}s_n^r= f_r(s_n\bm{\kappa}) + O(s_n), \quad \text{and} \quad
\sum_{\k \in I_{\bm{\kappa},n}}\frac{q_n^{a(\k)}}{\lp 1-q_n^{a(\k)} \rp^2} s_n^r =O(1),
$$
and hence
$$
\log \!\left(Q_{q_n}\!\left(\dim=n, \ \varphi\!\left(\bm{\kappa}\right)=c \right)\right)\leq s_n^{-r}\left(x\left(cs_n^r-f_r(s_n\bm{\kappa})\right)+O\!\left(|x|s_n+x^2\right)\right),
$$
where the error terms depend only on $\eta$. 
Now, if $|cs_n^r-f_r(s_n\bm{\kappa})| > \frac{\varepsilon}{2}$, then we can choose $x$ sufficiently close to zero and positive or negative as necessary, so that for some constant $C_{\varepsilon,\eta}>0$ and $n$ sufficiently large (depending on $\eta,\varepsilon$),
$$
Q_{q_n}\!\left(\dim=n, \ \varphi\!\left(\bm{\kappa}\right)=c \right) \leq 
e^{-C_{\varepsilon,\eta}s_n^{-r}}
\quad \mbox{if }  \left|cs_n^r-f_r(s_n\bm{\kappa})\right| > \frac{\varepsilon}{2}.
$$

Now note that  $f_r (\bm{t})$ is decreasing in each of its components and for $\bm{x} \in [0,1]^r$ we have
\begin{equation*}
0 \leq f_r (\bm{t}) - f_r (\bm{t} + s_n \bm{x}) \ll_\eta s_n .
\end{equation*}
So we can assume that $n$ is sufficiently large (depending on $\eta$) such that this difference is $\leq \frac{\varepsilon}{2}$. 
Since $\varphi (s_n^{-1}\bm{t}) = \varphi (\bm{k}_{n,\bm{t}})$ where 
$\bm{k}_{n ,\bm{t}} := \lceil s_n^{-1}\bm{t} \rceil$ (with the ceiling function applied to the components), we find that for such $n$ we have
\begin{multline*}
Q_{q_n}\!\left(\dim=n, \ \left|s_n^r \varphi\!\left(s_n^{-1}\bm t\right)-f_r(\bm{t})\right| > \varepsilon \ \text{for some} \ \bm{t}\in[\eta,\infty)^r \right) 
\\
\leq 
Q_{q_n}\!\left(\dim=n, \ \left|s_n^r \varphi(\bm{\kappa})-f_r(s_n \bm{\kappa})\right| > \frac{\varepsilon}{2} \ \text{for some} \ \bm{\kappa} \in \IN^r \mbox{ with } 
s_n \bm{\kappa}\in[\eta,\infty)^r \right)  .
\end{multline*}
Note that $\dim (\rho)=n$ implies $\varphi_\rho \!\left(\bm{\kappa}\right) \leq n$ and if $X_{\k}(\rho)>0$ that $k_j\leq n$
for $j\in\{1,\dots,r\}$.
In particular, $\varphi_\rho (\bm{\kappa}) = 0$ if $\kappa_j > n$ for some $j\in\{1,\dots,r\}$. Also, we can assume that $n$ is sufficiently large such that $f_r(s_n \bm{\kappa}) < \frac{\varepsilon}{2}$ for such $\bm{\kappa}$ because $s_n n \to \infty$ as $n \to \infty$. For such $n$ we then find
\begin{align*}
&Q_{q_n}\!\left(\dim=n, \ \left|s_n^r \varphi(\bm{\kappa})-f_r(s_n \bm{\kappa})\right| > \frac{\varepsilon}{2} \ \text{for some} \ \bm{\kappa} \in \IN^r \mbox{ with } 
s_n \bm{\kappa}\in[\eta,\infty)^r \right) 
\\
&\hspace{2cm}=
Q_{q_n}\!\left(\dim=n, \ \left|s_n^r \varphi(\bm{\kappa})-f_r(s_n \bm{\kappa})\right| > \frac{\varepsilon}{2} \ \text{for some} \ \bm{\kappa} \in \IN^r \mbox{ with } 
s_n^{-1} \eta \leq \kappa_j \leq n \right) 
\\
&\hspace{2cm}\leq 
\sum_{\substack{\bm{\kappa} \in \IN^r \\ s_n^{-1} \eta \leq \kappa_j \leq n}}\ 
\sum_{\substack{c \in \{0,1,\ldots,n\} \\ |s_n^r c - f_r(s_n \bm{\kappa})|>\frac{\varepsilon}{2}}} 
Q_{q_n}\!\left(\dim=n, \  \varphi(\bm{\kappa}) = c \right) . 
\end{align*}
Therefore, we obtain
\begin{equation*}
Q_{q_n}\!\left(\dim=n, \ \sup_{\bm{t} \in [\eta,\infty)^r} \left|s_n^r \varphi\!\left(s_n^{-1}\bm t\right)-f_r(\bm{t})\right| > \varepsilon  \right) \ll 
n^{r+1} e^{-C_{\varepsilon,\eta}s_n^{-r}} .
\end{equation*}
The theorem statement then follows by Lemma \ref{lem:ExpSmallPrinciple}.
\end{proof}

\subsection{Total number of irreducible representations}
We finally consider the total number of irreducible representations, which is counted by the random variable
$$
N:=\sum_{\k \in \N^r} X_{\k}.
$$
The distribution of $N$ depends on the following product.

\begin{lem}\label{L:MomentProductConv}
The product
\begin{equation*}
M(z):=\prod_{\k \in \N^r} \left(1- \frac{z}{a(\k)}\right)^{-1}
\end{equation*}
yields a meromorphic function on $\IC$ with poles located at the values $a(\bm{k})$ with $\bm{k} \in \IN^r$. 
For some constant $D>0$, it is bounded on the imaginary axis as
\begin{equation*}
|M(it)| \leq \exp \!\lp - D |t|^{\frac{2}{r+1}} \rp
\quad \mbox{if } |t| \geq 1 .
\end{equation*}
\end{lem}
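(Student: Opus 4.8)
The plan is to split the statement into two pieces -- meromorphy of $M$, and the exponential decay along the imaginary axis -- and to observe that both follow essentially directly from the lattice point asymptotics in Proposition~\ref{prop:irrep_partial_sum_asymptotics}.

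For meromorphy, the first thing I would record is that $\sum_{\bm{k} \in \IN^r} a(\bm{k})^{-1} < \infty$ (recall that throughout $r \ge 2$). Writing this sum as $\sum_{m \ge 1} \varrho_r(m)/m$ and applying Abel partial summation, it is controlled by $R_r(M)/M + \int_1^\infty R_r(t)\, t^{-2}\, dt$; both pieces converge because $R_r(t) \ll_r t^{2/(r+1)}$ by Proposition~\ref{prop:irrep_partial_sum_asymptotics} and $\tfrac{2}{r+1} < 1$. Given this, for each $R > 0$ the sum $\sum_{\bm{k}} |z|/a(\bm{k})$ is bounded uniformly on $\{|z| \le R\}$, so the canonical product $P(z) := \prod_{\bm{k} \in \IN^r}(1 - z/a(\bm{k}))$ converges locally uniformly on $\IC$ to an entire function whose zero set is exactly $\{a(\bm{k}) : \bm{k} \in \IN^r\}$, the order of the zero at a value $v$ being $\varrho_r(v)$. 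Then $M(z) = 1/P(z)$ is meromorphic on $\IC$ with poles located precisely at the $a(\bm{k})$.

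For the imaginary-axis estimate, the key observation is that $|1 - it/a(\bm{k})|^2 = 1 + t^2/a(\bm{k})^2 \ge 2$ whenever $a(\bm{k}) \le |t|$. Hence, taking logarithms,
\[
-\log |M(it)| = \tfrac12 \sum_{\bm{k} \in \IN^r} \log\!\Big(1 + \tfrac{t^2}{a(\bm{k})^2}\Big) \ \ge\ \tfrac12 \!\!\sum_{\substack{\bm{k} \in \IN^r \\ a(\bm{k}) \le |t|}} \!\! \log\!\Big(1 + \tfrac{t^2}{a(\bm{k})^2}\Big) \ \ge\ \tfrac{\log 2}{2}\, R_r(|t|),
\]
where I discard the nonnegative tail with $a(\bm{k}) > |t|$ and use $\log(1+u) \ge \log 2$ for $u \ge 1$. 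To finish I would invoke Proposition~\ref{prop:irrep_partial_sum_asymptotics} once more: since the error exponent $\tfrac{2(r-1)}{r^2}$ is strictly smaller than $\tfrac{2}{r+1}$, we have $R_r(x) \sim C_r x^{2/(r+1)}$ as $x \to \infty$, and combining this with the trivial bound $R_r(x) \ge 1$ for $x \ge 1$ (from $\bm{k} = (1,\dots,1)$) produces a constant $c_r > 0$ with $R_r(x) \ge c_r x^{2/(r+1)}$ for \emph{all} $x \ge 1$. Taking $D := \tfrac{c_r \log 2}{2}$ then yields $|M(it)| \le \exp(-D|t|^{2/(r+1)})$ for $|t| \ge 1$.

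There is no real obstacle here -- the whole argument is a repackaging of Proposition~\ref{prop:irrep_partial_sum_asymptotics} -- so the only point demanding any care is the last one: upgrading the \emph{asymptotic} $R_r(x) \sim C_r x^{2/(r+1)}$ to a \emph{lower} bound valid uniformly for $x \ge 1$ rather than merely for large $x$, which is exactly where the smallness of the error exponent and the presence of the trivial term are used.
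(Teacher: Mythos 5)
Your proposal is correct and takes essentially the same route as the paper: both establish meromorphy from the convergence of $\sum_{\bm{k}} a(\bm{k})^{-1}$ via Proposition~\ref{prop:irrep_partial_sum_asymptotics}, and both derive the imaginary-axis decay by bounding $-\log|M(it)| = \tfrac12\sum_{m\ge1}\log(1+t^2/m^2)\varrho_r(m)$ below in terms of $R_r(|t|)$ and then invoking Proposition~\ref{prop:irrep_partial_sum_asymptotics}. Your truncation-to-$a(\bm{k})\le|t|$ with $\log(1+u)\ge\log 2$ is a mild simplification of the paper's Abel partial summation at the last step, and the careful upgrade of the asymptotic for $R_r$ to a uniform lower bound for $x\ge1$ is correctly handled.
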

\begin{proof}
We have $\smash{\sum_{\bm{k} \in \IN^r} a(\bm{k})^{-1} < \infty}$ thanks to abel partial summation and Proposition \ref{prop:irrep_partial_sum_asymptotics}. 
Thus the reciprocal product $M^{-1}(z)$ converges locally uniformly to an entire function with zeros on $\smash{\{a(\bm{k})\}_{\bm{k} \in \IN^r}}$, from which the first statement follows.
To prove the second statement, we write
\begin{equation*}
|M(it)| = 
\exp \!\lp -\frac{1}{2} \sum_{\k \in \N^r} \log\!\lp 1+\frac{t^2}{a(\k)^2} \rp \rp
=
\exp \!\lp -\frac{1}{2} \sum_{m \geq 1} \log\!\lp 1+\frac{t^2}{m^2} \rp \varrho_r (m) \rp 
\quad \mbox{for } t \in \IR.
\end{equation*}
An application of abel partial summation and Proposition \ref{prop:irrep_partial_sum_asymptotics} then proves the claimed estimate.
\end{proof}

The limiting cumulative distribution function for $N$ likely lacks a simple description in terms of elementary functions, so instead we state our result in terms of its moment generating function.

\begin{thm}\label{T:NumberofParts}
If governed by $P_n$, the random variable \smash{$s_n^{\frac{r(r+1)}{2}}N$} converges in distribution as $n \to \infty$ to the unique real random variable that has $M(u)$ as its moment generating function for $-1 < u < 1$.
\end{thm}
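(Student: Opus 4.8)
The plan is to prove convergence in distribution by establishing convergence of moment generating functions, using the equivalence of ensembles together with the rare events lemma to transfer the computation to the Boltzmann model, where the $X_{\bm k}$ are independent geometrics.

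\medskip
\noindent\textbf{Step 1: Reduce to the Boltzmann model via a truncation.}
Fix $u \in (-1,1)$. We want to show $\mathrm{E}_{P_n}\bigl(e^{u s_n^{\nu_r} N}\bigr) \to M(u)$, where $\nu_r = \tfrac{r(r+1)}{2}$. The random variable $N$ involves all $X_{\bm k}$, so Corollary \ref{C:EquivalenceofEnsembles} does not apply directly. The key observation is that $N$ is dominated by the contributions of $\bm k$ with $a(s_n\bm k) \asymp 1$, i.e.\ $\bm k$ in the ``middle range''. Concretely, split $N = N_n^{<} + N_n^{\mathrm{mid}} + N_n^{>}$, where $N_n^{<} := \sum_{a(s_n\bm k) \leq \delta} X_{\bm k}$, $N_n^{>} := \sum_{a(s_n\bm k) \geq \delta^{-1}} X_{\bm k}$, and $N_n^{\mathrm{mid}}$ collects the rest, for a small fixed $\delta>0$. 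For the large-$\bm k$ part, $\mathrm{E}_{q_n}(N_n^{>}) = \sum_{a(s_n\bm k)\geq \delta^{-1}} \frac{q_n^{a(\bm k)}}{1-q_n^{a(\bm k)}} \asymp s_n^{-r}\int_{a(\bm y)\geq \delta^{-1}} \frac{e^{-a(\bm y)}}{1-e^{-a(\bm y)}}\bm{dy}$ by the integral comparison in Proposition \ref{P:SaddlePointAsympGrowth}, which is $o(s_n^{-r})$ as $\delta\to 0$; hence $s_n^{\nu_r} N_n^{>}$ is small in probability under $Q_{q_n}$ (note $\nu_r > r$, so actually $s_n^{\nu_r}\mathrm{E}_{q_n}(N_n^{>}) \to 0$ for each fixed $\delta$), and by the rare events lemma (Lemma \ref{lem:ExpSmallPrinciple}) — after checking the variance bound needed — the same holds under $P_n$. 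For the small-$\bm k$ part, $\mathrm{E}_{q_n}(N_n^{<}) = \sum_{a(s_n\bm k)\leq\delta}\frac{q_n^{a(\bm k)}}{1-q_n^{a(\bm k)}}$; here $\frac{q_n^{a(\bm k)}}{1-q_n^{a(\bm k)}} \asymp \frac{1}{a(s_n\bm k) s_n^{\nu_r}}$, so $s_n^{\nu_r}\mathrm{E}_{q_n}(N_n^{<}) \asymp \sum_{a(s_n\bm k)\leq\delta}\frac{1}{a(s_n\bm k)}$, which up to the factor $s_n^{\nu_r}$ is a Riemann-type sum; using Proposition \ref{prop:irrep_partial_sum_asymptotics} and Abel partial summation this behaves like $\sum_{\bm k: a(\bm k)\leq \delta s_n^{-\nu_r}} a(\bm k)^{-1}$, which is bounded and tends to $\sum_{a(\bm k)\leq \delta'} a(\bm k)^{-1}$, small as $\delta\to 0$ — wait, this needs care: we want the $\delta\to0$ tail of $\sum_{\bm k\in\IN^r} a(\bm k)^{-1}$ (convergent by Proposition \ref{prop:irrep_partial_sum_asymptotics}) to be small, which it is. So $s_n^{\nu_r} N_n^{<}$ is also negligible as $\delta \to 0$, uniformly in $n$.

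\medskip
\noindent\textbf{Step 2: Handle the middle range.}
The middle set $I_n^{\mathrm{mid}} := \{\bm k : \delta < a(s_n\bm k) < \delta^{-1}\}$ is of the form where $a(s_n\bm k)\asymp 1$, so condition \eqref{E:BoldXVarSmall} of Proposition \ref{P:EquivalenceofEnsembles} holds: $\mathrm{Var}_{q_n}(\sum_{\bm k\in I_n^{\mathrm{mid}}} a(\bm k)X_{\bm k}) \asymp s_n^{-r(r+1)}\cdot \#I_n^{\mathrm{mid}} \asymp s_n^{-r(r+1)}\cdot s_n^{-r} = s_n^{-r(r+2)} = o(\sigma_n^2)$ is false — $\sigma_n^2 \asymp s_n^{-r(r+2)}$ too. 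So one cannot take all of $I_n^{\mathrm{mid}}$ at once; instead apply equivalence of ensembles on a finite sub-box $\prod_{j=1}^r[1,\lfloor C/s_n\rfloor]$ for a large constant $C$ (which satisfies Corollary \ref{C:EquivalenceofEnsembles}(3) is false, but the variance condition: $\asymp s_n^{-r(r+1)} (C/s_n)^r \cdot$ hmm). The cleaner route: combine small and middle into $J_n := \{\bm k : a(s_n\bm k) \leq \delta^{-1}\}$, which is contained in a box $\prod[1, C_\delta s_n^{-1}]$, and check $\mathrm{Var}_{q_n}(\sum_{\bm k\in J_n}a(\bm k)X_{\bm k}) = o(\sigma_n^2)$: the variance is $\asymp s_n^{-r}\int_{a(\bm y)\leq \delta^{-1}} a(\bm y)^2 \frac{e^{-a(\bm y)}}{(1-e^{-a(\bm y)})^2}\bm{dy} \cdot s_n^{-r(r+1)}$, hmm this is $\asymp s_n^{-r(r+2)} \asymp \sigma_n^2$, so it is \emph{not} $o(\sigma_n^2)$. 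This is precisely the obstruction, and the resolution is: apply equivalence of ensembles with $I_n$ a box of \emph{fixed} dimensions $\prod_{j=1}^r[1,K]$ for fixed $K$ (so $\#I_n = K^r$ fixed, variance $\asymp s_n^{-r(r+1)} = o(\sigma_n^2)$, condition satisfied), obtain the joint limiting law of $(X_{\bm k})_{\bm k\in[1,K]^r}$ under $P_n$ equals that under $Q_{q_n}$, then let $K\to\infty$. Under $Q_{q_n}$, $X_{\bm k}$ is geometric with $\mathrm{E}_{q_n}(X_{\bm k}) = \frac{q_n^{a(\bm k)}}{1-q_n^{a(\bm k)}} \sim \frac{1}{a(s_n\bm k)\cdot\log(q_n^{-1})} = \frac{1}{a(\bm k) s_n^{\nu_r} \cdot (\ldots)}$; more precisely $s_n^{\nu_r} X_{\bm k} \Rightarrow$ an exponential random variable with mean $1/a(\bm k)$ (scaling limit of geometric), and these are independent. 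Hence for fixed $K$, $s_n^{\nu_r}\sum_{\bm k\in[1,K]^r} X_{\bm k} \Rightarrow \sum_{\bm k\in[1,K]^r} E_{\bm k}$ with $E_{\bm k}\sim\mathrm{Exp}(a(\bm k))$ independent, whose mgf is $\prod_{\bm k\in[1,K]^r}(1 - u/a(\bm k))^{-1}$.

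\medskip
\noindent\textbf{Step 3: Assemble the mgf limit and conclude.}
Putting Steps 1--2 together: given $\epsilon>0$, pick $\delta$ small so the small/large contributions contribute at most $\epsilon$ to the (exponential) moments for all large $n$ — here one should use H\"older's inequality to split $\mathrm{E}(e^{u s_n^{\nu_r} N})$ into the product over the three ranges, controlling the small and large factors by their first moments (valid since $e^{ux}-1 \leq$ linear for $x$ in the relevant regime, or more robustly via the explicit geometric mgf $\mathrm{E}_{q_n}(e^{u s_n^{\nu_r} X_{\bm k}}) = \frac{1-q_n^{a(\bm k)}}{1 - e^{u s_n^{\nu_r}} q_n^{a(\bm k)}}$ and summing $\log$ of these). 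Then the middle range over a fixed box $[1,K]^r$ gives, via equivalence of ensembles, the partial product $\prod_{\bm k\in[1,K]^r}(1-u/a(\bm k))^{-1}$, and letting $K\to\infty$ with $\delta\to 0$ recovers $M(u) = \prod_{\bm k\in\IN^r}(1-u/a(\bm k))^{-1}$, which converges for $|u|<1$ since $\sum a(\bm k)^{-1}<\infty$ (shown in Lemma \ref{L:MomentProductConv}). Finally, $M(u)$ is finite and analytic in a neighborhood of $0$ (being a convergent product with first pole at $a(\bm 1)\cdot$something $\geq 1$), hence it is the mgf of a unique distribution and pointwise convergence of mgf's on an interval around $0$ implies convergence in distribution (classical; e.g.\ Curtiss' theorem). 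The main obstacle is precisely the bookkeeping in Step 1--2: one cannot apply equivalence of ensembles to the whole middle+small range because its variance is comparable to $\sigma_n^2$, so the argument must be organized as ``fixed box via equivalence of ensembles, then $K\to\infty$'', with the tails ($a(s_n\bm k)$ small or large) controlled uniformly by first-moment estimates and the rare events lemma; verifying that the rare-events / uniformity estimates are strong enough to interchange the $K\to\infty$, $\delta\to 0$, and $n\to\infty$ limits is the delicate part.
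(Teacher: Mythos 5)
There is a genuine gap, and it starts in Step~1. Your ``small'' range $\{\bm k : a(s_n\bm k)\le\delta\}$ is \emph{not} negligible --- it is where essentially all of $N$ lives. Indeed
\[
s_n^{\nu_r}\,\mathrm{E}_{q_n}\bigl(N_n^{<}\bigr)
\asymp \sum_{a(\bm k)\le \delta s_n^{-\nu_r}}\frac{1}{a(\bm k)}\;,
\]
and for fixed $\delta>0$ the upper cutoff $\delta s_n^{-\nu_r}\to\infty$ as $n\to\infty$, so the sum tends to the \emph{full} convergent series $\sum_{\bm k\in\IN^r}a(\bm k)^{-1}$, not to a small tail. (The step where you wrote ``tends to $\sum_{a(\bm k)\le\delta'}a(\bm k)^{-1}$'' has no valid justification: there is no $\delta'$ that stays bounded.) So the small range carries the entire limit $M(u)$, and discarding it breaks the argument from the outset.

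The second, related issue is that your proposed fix in Steps~2--3 --- work on a fixed box $[1,K]^r$, then let $K\to\infty$ --- cannot close the sandwich. The remainder $s_n^{\nu_r}\sum_{\bm k\notin[1,K]^r}X_{\bm k}$ is not $o(1)$ in probability under $Q_{q_n}$; for fixed $K$ it converges to a \emph{nondegenerate} random variable (the tail $\sum_{\bm k\notin[1,K]^r}E_{\bm k}$). Markov's inequality only gives $Q_{q_n}\bigl(\text{remainder}>\epsilon\bigr)\ll\epsilon^{-1}\sum_{\bm k\notin[1,K]^r}a(\bm k)^{-1}$, which is small for large $K$ but is in no way $o\bigl(s_n^{r(r+2)/2}\bigr)$, so the rare events lemma (Lemma~\ref{lem:ExpSmallPrinciple}) cannot transfer this to $P_n$. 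Equivalence of ensembles also does not apply to the complement of a fixed box (it fails both the variance condition and each of the Corollary~\ref{C:EquivalenceofEnsembles} criteria). You are left with no mechanism to control the remainder under $P_n$.

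What makes the theorem work is a \emph{growing} cutoff calibrated between two constraints: large enough to swallow the whole limit, small enough that equivalence of ensembles still applies. The paper takes $I_n=\{a(\bm k)>s_n^{-5\lambda_r}\}$ with $\lambda_r=\tfrac{r(r+1)}{24}=\tfrac{\nu_r}{12}$. For $\bm k\notin I_n$ one has $a(s_n\bm k)\le s_n^{7\lambda_r}\to 0$, so Corollary~\ref{C:EquivalenceofEnsembles}(3) applies and the mgf of $s_n^{\nu_r}\sum_{\bm k\notin I_n}X_{\bm k}$ under $Q_{q_n}$ converges to $M(u)$ (this already captures the full limit since the excluded tail contributes $O\bigl((s_n^{-5\lambda_r})^{-1+2/(r+1)}\bigr)=o(1)$ to the scaled first moment). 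The tail $s_n^{\nu_r}\sum_{\bm k\in I_n}X_{\bm k}$ is handled by a separate Chernoff-type bound (Lemma~\ref{L:NumPartsTailSmall}) with tilting parameter $x_n=s_n^{8\lambda_r}$, which produces an \emph{exponentially} small probability --- precisely what the rare events lemma needs. The two halves are combined by a one-sided sandwich on the cdf, which is cleaner than trying to argue directly with mgf's under $P_n$ (you cannot compute those; the total-variation transfer gives weak convergence, not mgf convergence). None of this calibration appears in your proposal, and without it the argument does not close.
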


Our approach to Theorem \ref{T:NumberofParts} is to break up $N$ into sums over small irreducible representations and large irreducible representations.  Equivalence of ensembles applies to the small irreducible representations.  The contribution of large irreducible representations, on the other hand, turns out to be negligible, which we prove using the rare events lemma.

\begin{lem}\label{L:NumPartsTailSmall}
Let $\l_r := \frac{r(r+1)}{24}$, $r \geq 2$, and
$I_n:=\left\{\k \in \N^r : a(\k) > s_n^{-5 \l_r}\right\}$.
For all $\varepsilon>0$, we have
$$\lim_{n \to \infty}  P_n\!\left(s_n^{12 \l_r} \sum_{\k \in I_n} X_{\k} \leq \varepsilon\right)=1.$$
\end{lem}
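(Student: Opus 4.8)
The plan is to derive the statement from the rare events lemma (Lemma~\ref{lem:ExpSmallPrinciple}). Write $S_n := \sum_{\k \in I_n} X_{\k}$ and $M_n := \varepsilon s_n^{-12\l_r}$, so that the complement of the event in the lemma is contained in $\{S_n \geq M_n\}$; by the remark following Lemma~\ref{lem:ExpSmallPrinciple}, it is enough to show that $Q_{q_n}(S_n \geq M_n)$ is exponentially small in a positive power of $n$. The crucial bookkeeping is that $\tfrac{r(r+1)}{2} = 12\l_r$, so for $\k \in I_n$ one has $q_n^{a(\k)} = \exp(-s_n^{12\l_r} a(\k)) < \exp(-s_n^{7\l_r})$, using $a(\k) > s_n^{-5\l_r}$ and $12\l_r - 5\l_r = 7\l_r$; in particular $q_n^{a(\k)} \leq e^{-s_n^{7\l_r}}$ and $1 - q_n^{a(\k)} \geq 1 - e^{-s_n^{7\l_r}} \geq \tfrac12 s_n^{7\l_r}$ for $n$ large.

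First I would bound the mean. Since the $X_{\k}$ are independent and geometric under $Q_{q_n}$ by Proposition~\ref{P:BoltzmannSimpleProperties}(2),
\begin{equation*}
\mathrm{E}_{q_n}(S_n) = \sum_{\k \in I_n} \frac{q_n^{a(\k)}}{1-q_n^{a(\k)}} = \sum_{m > s_n^{-5\l_r}} \frac{q_n^{m}}{1-q_n^{m}}\, \varrho_r(m).
\end{equation*}
The strategy is to split the range $m > s_n^{-5\l_r}$ into dyadic blocks $[2^j s_n^{-5\l_r}, 2^{j+1} s_n^{-5\l_r})$, $j \geq 0$, bound $\tfrac{q_n^m}{1-q_n^m}$ on each block by its value at the left endpoint, and count lattice points using $R_r(x) \ll x^{2/(r+1)}$ from Proposition~\ref{prop:irrep_partial_sum_asymptotics}. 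The blocks with $2^j s_n^{7\l_r} \gtrsim 1$ decay super-exponentially and contribute $\ll s_n^{-r}$, while the blocks with $2^j s_n^{7\l_r} \lesssim 1$ form a geometric series in $2^{-j(r-1)/(r+1)}$ and contribute $\ll s_n^{-\frac{5r}{12} - 7\l_r} = s_n^{-r(7r+17)/24}$. Since $r \geq 2$ forces $\tfrac{r(7r+17)}{24} < \tfrac{r(12r+12)}{24} = 12\l_r$, this yields $\mathrm{E}_{q_n}(S_n) \ll s_n^{-r(7r+17)/24} = o(s_n^{-12\l_r}) = o(M_n)$. This is precisely where $r \geq 2$ and the numerical constants in the definitions of $\l_r$ and $I_n$ enter.

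Second, I would run a Chernoff bound. For $0 < t < s_n^{7\l_r}$ the moment generating function $\mathrm{E}_{q_n}(e^{tX_{\k}}) = \tfrac{1-q_n^{a(\k)}}{1-e^t q_n^{a(\k)}}$ is finite for every $\k \in I_n$, so independence gives
\begin{equation*}
Q_{q_n}(S_n \geq M_n) \leq \exp\!\left(-t M_n - \sum_{\k \in I_n} \log\!\left(1 - \tfrac{(e^t-1)q_n^{a(\k)}}{1-q_n^{a(\k)}}\right)\right).
\end{equation*}
Taking $t := \tfrac18 s_n^{7\l_r}$ and using the bounds above together with $e^t - 1 \leq 2t$, each fraction $\tfrac{(e^t-1)q_n^{a(\k)}}{1-q_n^{a(\k)}}$ is $\leq \tfrac12$ for $n$ large, so $-\log(1-x) \leq 2x$ on $[0,\tfrac12]$ gives $-\sum_{\k}\log(\cdots) \leq 4t\,\mathrm{E}_{q_n}(S_n)$, and hence $Q_{q_n}(S_n\geq M_n) \leq \exp(-t(M_n - 4\mathrm{E}_{q_n}(S_n)))$. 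By the first step $\mathrm{E}_{q_n}(S_n) = o(M_n)$, so for $n$ large this is at most $\exp(-\tfrac12 t M_n) = \exp(-\tfrac{\varepsilon}{16}s_n^{-5\l_r})$; since $s_n \asymp n^{-2/(r(r+3))}$ by Proposition~\ref{P:SaddlePointAsympGrowth}, $s_n^{-5\l_r}$ is a positive power of $n$, and Lemma~\ref{lem:ExpSmallPrinciple} then completes the proof.

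The step I expect to be the main obstacle is the mean estimate. The crude bound obtained by pulling $\tfrac{1}{1-q_n^{a(\k)}}$ out of the sum at the cut-off value only gives $\mathrm{E}_{q_n}(S_n) \ll s_n^{-r - 7\l_r} = s_n^{-r(7r+31)/24}$, whose exponent beats $12\l_r$ only for $r \geq 4$. The dyadic decomposition (or, equivalently, an Abel-summation computation against $R_r$) is essential because it reveals that the bulk of the tail mass sits at $a(\k)$ of order $s_n^{-r(r+1)/2}$ rather than at the boundary $a(\k) \asymp s_n^{-5\l_r}$, and it is this finer localization that makes the exponent small enough for all $r \geq 2$.
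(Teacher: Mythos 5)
Your proof is correct and uses essentially the same mechanism as the paper: a Chernoff-type exponential-moment bound for the tail sum over $I_n$, the crucial estimate that this tail's expected size is $o(s_n^{-12\l_r})$, and the rare events lemma to transfer to $P_n$.

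The two presentations differ in minor but instructive ways. You bound the marginal probability $Q_{q_n}(S_n \geq M_n)$ by a standard Chernoff bound and then invoke Lemma~\ref{lem:ExpSmallPrinciple} as a black box; the paper instead bounds the joint probability $Q_{q_n}(\dim = n, \ \sum_{\k \in I_n}X_{\k} = \ell)$ for each $\ell$ via the two-variable generating function (as in Theorem~\ref{T:Shape}), sums over $\ell$, and then applies the rare events lemma. These are the same estimate in different clothing; your version is arguably cleaner because it separates the conditioning cleanly into the rare events lemma. The paper also takes the more conservative $x_n = s_n^{8\l_r}$ (yielding $\exp(-\varepsilon s_n^{-4\l_r}(1+o(1)))$) where you take $t = \tfrac18 s_n^{7\l_r}$ (yielding $\exp(-\tfrac{\varepsilon}{16}s_n^{-5\l_r})$); either choice is amply sufficient. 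For the mean estimate, the paper uses Abel partial summation against $R_r$, and its relevant sum $\sum_{\k\in I_n}\tfrac{q_n^{a(\k)}}{1-e^{x_n}q_n^{a(\k)}}$ carries an extra $e^{x_n}$ factor that doesn't change the order; your dyadic decomposition is an equivalent computation. Your closing observation — that the crude bound pulling $1/(1-q_n^{a(\k)})$ out at the cut-off only gives $s_n^{-r(7r+31)/24}$, which beats $s_n^{-12\l_r}$ only for $r\geq 4$ — correctly identifies why the finer localization at $a(\k)\asymp s_n^{-12\l_r}$ is necessary, and is a good diagnostic of where the numerical choices $\l_r$ and the threshold $s_n^{-5\l_r}$ come from.
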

\begin{proof}
Let $\ell \in \{1,2,\ldots,n\}$ with $s_n^{12 \l_r} \ell>\varepsilon$ (note that $s_n^{12 \l_r} n \asymp n^{\frac{2}{r+3}}$ by Proposition \ref{P:SaddlePointAsympGrowth}).
Using a saddle-point/Chernoff bound, as in the proof of Theorem \ref{T:Shape}, gives
$$
Q_{q_n}\!\left(\dim = n, \ \sum_{\k \in I_n}X_{\k}=\ell\right)
\leq e^{-\ell x_n}\prod_{\k \in I_n} \frac{1-q_n^{a(\k)}}{1-e^{x_n}q_n^{a(\k)}},
$$
where $x_n := s_n^{8 \l_r}$, noting that $e^{x_n}q_n^{a(\k)}<1$ for $\k \in I_n$ and $n$ sufficiently large.
By the mean value theorem, we can bound
$$
\left|-\log\left(1-e^{x_n}q_n^{a(\k)}\right)+\log\left(1-q_n^{a(\k)}\right)\right|\leq \frac{x_ne^{x_n}q_n^{a(\k)}}{1-e^{x_n}q_n^{a(\k)}} \leq \frac{2x_nq_n^{a(\k)}}{1-e^{x_n}q_n^{a(\k)}},
$$
for $n$ sufficiently large, so
\begin{equation}\label{E:ZnMVTbound}
\log \!\left(Q_{q_n}\!\left(\dim = n, \ \sum_{\k \in I_n} X_{\k}=\ell\right)\right) \leq -x_n\left(\ell - \sum_{k \in I_n} \frac{2q_n^{a(\k)}}{1-e^{x_n}q_n^{a(\k)}}\right) .
\end{equation}
We next show that the sum above is $o(s_n^{-12\l_r})$.  Using abel partial summation we calculate
\begin{equation*}
\sum_{k \in I_n} \frac{q_n^{a(\k)}}{1-e^{x_n}q_n^{a(\k)}} 
=
-\frac{e^{-s_n^{7\l_r}}}{1-e^{s_n^{8\l_r}-s_n^{7\l_r}}}R_r\!\left(s_n^{-5 \l_r}\right)+s_n^{12\l_r} \int_{s_n^{-5\l_r}}^{\infty} \frac{e^{-s_n^{12\l_r}t} \, R_r(t)}{\left(1-e^{s_n^{8\l_r}-s_n^{12\l_r}t}\right)^2}dt .
\end{equation*}
Thanks to  Proposition \ref{prop:irrep_partial_sum_asymptotics}, the boundary term is
$\ll s_n^{-7 \l_r} ( s_n^{-5 \l_r } )^{\frac{2}{r+1}} = o(s_n^{-12\l_r})$ as $n \to \infty$. For the integral, we first study the portion from $s_n^{-5 \l_r}$ to $s_n^{-12 \l_r}$ and find that
\begin{equation*}
s_n^{12\l_r} \int_{s_n^{-5\l_r}}^{s_n^{-12\l_r}} \frac{e^{-s_n^{12\l_r}t} \, R_r(t)}{\left(1-e^{s_n^{8\l_r}-s_n^{12\l_r}t}\right)^2}dt
\ll
s_n^{-12\l_r} \int_{s_n^{-5\l_r}}^{s_n^{-12\l_r}}  t^{-2+\frac{2}{r+1}} dt
\asymp 
s_n^{-12\l_r} \lp s_n^{-5\l_r} \rp^{-\frac{r-1}{r+1}},
\end{equation*}
which is $o(s_n^{-12\l_r})$ as well. The remaining portion is estimated as
\begin{equation*}
s_n^{12\l_r} \int_{s_n^{-12\l_r}}^{\infty} \frac{e^{-s_n^{12\l_r}t} \, R_r(t)}{\left(1-e^{s_n^{8\l_r}-s_n^{12\l_r}t}\right)^2}dt
\ll
s_n^{12\l_r} \int_{s_n^{-12\l_r}}^{\infty} e^{-s_n^{12\l_r}t} \, t^{\frac{2}{r+1}} dt
\asymp
\lp s_n^{-12\l_r} \rp^{\frac{2}{r+1}}
\end{equation*}
and this is also $o(s_n^{-12\l_r})$.
Plugging this into \eqref{E:ZnMVTbound} and recalling $s_n^{12 \l_r} \ell>\varepsilon$ and $x_n = s_n^{8 \l_r}$, we~find
\begin{equation*}
\log \!\left(Q_{q_n}\!\left(\dim = n, \ \sum_{\k \in I_n} X_{\k}=\ell\right)\right) \leq -s_n^{-4\l_r} \lp \varepsilon + o(1) \rp.
\end{equation*}
Since the error term does not depend on $\ell$, running over all possible $\ell$, we have the bound
$$
Q_{q_n}\!\left(\dim = n, \ s_n^{12\l_r}\sum_{\k \in I_n} X_{\k} > \varepsilon  \right)
\leq n \exp \!\lp  -\varepsilon s_n^{-4\l_r} (1+o(1)) \rp.
$$
As the right-hand side decays exponentially, the lemma follows using Lemma \ref{lem:ExpSmallPrinciple}.
\end{proof}

We are now ready to prove Theorem \ref{T:NumberofParts}.
\csname @beginparpenalty\endcsname10000
\begin{proof}[Proof of Theorem \ref{T:NumberofParts}]
We assume the notations introduced in Lemma \ref{L:NumPartsTailSmall}. Our first task is establishing that the moment generating function for 
$$
	s_n^{\frac{r(r+1)}{2}}\sum_{\k \in \N^r\setminus I_n}X_{\k}
$$
if governed by $Q_{q_n}$ tends pointwise to $M(u)$ for $|u| < 1$. For this, we obtain, using Proposition \ref{P:BoltzmannSimpleProperties}, that, for any fixed $|u|<1$,
\begin{equation*}
\mathrm{E}_{q_n}\!\left(\exp\left(u \, s_n^{\frac{r(r+1)}{2}}\!\!\sum_{\k \in \N^r\setminus I_n}\!\!X_{\k} \right)\right)
=
\prod_{\k \in \N^r\setminus I_n}\mathrm{E}_{q_n}\!\left(q_n^{-u X_{\k}} \right)
=
\prod_{\k \in \N^r\setminus I_n}\frac{1-q_n^{a(\k)}}{1-q_n^{a(\k)-u}}.
\end{equation*}
Since $s_n^{12 \l_r} \leq a(s_n \bm{k}) \leq s_n^{7 \l_r}$ for $\bm{k} \in \N^r\setminus I_n$, we find
\begin{equation*}
1-q_n^{a(\k)} = s_n^{12 \l_r} a (\bm{k}) \lp 1 + O\left(s_n^{7 \l_r}\right) \rp
\andd
1-q_n^{a(\k)-u} = s_n^{12 \l_r} (a (\bm{k}) -  u)  \lp 1 + O\left(s_n^{7 \l_r}\right) \rp
\end{equation*}
with the bounds on the error terms independent of $\bm{k}$. So taking the product over $\bm{k}$ while noting that $\# (\IN^r \setminus I_n) = R_r (s_n^{-5 \l_r}) \ll s_n^{-5 \l_r}$ by Proposition \ref{prop:irrep_partial_sum_asymptotics}, we obtain
\begin{equation*}
\mathrm{E}_{q_n}\!\left(\exp\left(u \, s_n^{\frac{r(r+1)}{2}}\!\!\sum_{\k \in \N^r\setminus I_n}\!\!X_{\k} \right)\right)
= \lp 1 + O\left(s_n^{2 \l_r}\right)\rp  \prod_{\k \in \N^r\setminus I_n} \!\!\lp 1- \frac{u}{a(\bm{k})} \rp^{-1}.
\end{equation*}
This indeed tends to $M(u)$ as we claimed.

Thanks to L\'evy's continuity theorem (adapted to moment generating functions as in \cite[Theorems 1, 3]{Cur}), this implies the existence of the unique random variable with moment generating function $M(u)$ for $-1 < u < 1$ and the weak convergence of the above mentioned sequence to this random variable. In particular, denoting the cumulative distribution function of this limiting random variable by $F$, we note that it is continuous (and in fact smooth) over $\IR$ since the characteristic function $M(it)$ satisfies $|M(it)| \ll |t|^{-K}$ as $|t| \to \infty$ for all $K \in \IN$ by Lemma \ref{L:MomentProductConv}.
Consequently, we have\footnote{Note that $F(x) = 0$ for $x < 0$ is alternatively seen from the property that the poles of $M(iz)$ are all located on the lower half-plane.}
\begin{equation}\label{eq:Qqn_total_number_In_moment_cdf}
\lim_{n \to \infty} Q_{q_n} \!\lp s_n^{\frac{r(r+1)}{2}}\sum_{\k \in \N^r\setminus I_n}X_{\k} \leq x \rp = F(x) 
\quad \mbox{ for all } x \in \IR.
\end{equation}

Since $N \geq \sum_{\k \in \N^r\setminus I_n}X_{\k}$, for any $x \in \IR$
we have the upper bound 
\begin{equation*}
P_n\!\left(s_n^{\frac{r(r+1)}{2}}N \leq x\right) 
\!\leq\! 
P_n\!\left(s_n^{\frac{r(r+1)}{2}}\sum_{\k \in \N^r\setminus I_n}X_{\k} \leq x\right) .
\end{equation*}
For $\bm{k} \in \N^r\setminus I_n$ we have $a(s_n \k) \leq s_n^{7 \l_r} = o(1)$, so we can use the equivalence of ensembles as in Corollary \ref{C:EquivalenceofEnsembles} (3) together with equation \eqref{eq:Qqn_total_number_In_moment_cdf} to find that 
\begin{equation*}
\limsup_{n \to \infty} P_n\!\left(s_n^{\frac{r(r+1)}{2}}N \leq x\right)
\leq F(x) .
\end{equation*}

Now let $\varepsilon >0$ be arbitrary. Then for any $x \in \IR$ we have the lower bound
\begin{equation*}
P_n\!\left(s_n^{\frac{r(r+1)}{2}}N \leq x\right)
\geq 
P_n\!\left(s_n^{\frac{r(r+1)}{2}}\sum_{\k \in \N^r\setminus I_n}X_{\k} \leq x-\varepsilon \right) 
+
P_n\!\left(s_n^{\frac{r(r+1)}{2}}\sum_{\k \in I_n}X_{\k} \leq \varepsilon \right) 
-1 .
\end{equation*}
The first term on the right-hand side converges to $F(x-\varepsilon)$ using the equivalence of ensembles as above, whereas the second and the third terms tend to zero thanks to Lemma \ref{L:NumPartsTailSmall}. So we have
\begin{equation*}
\liminf_{n \to \infty} P_n\!\left(s_n^{\frac{r(r+1)}{2}}N \leq x\right)
\geq F(x-\varepsilon) .
\end{equation*}
Since this bound holds for arbitrary $\varepsilon > 0$, we can use the continuity of the cumulative distribution function $F(x)$ to let $\varepsilon \to 0^+$ and obtain the same lower bound with $F$ as well.
\end{proof}

\section{Concluding remarks}\label{S:Conclusion}

A great deal more is known about partition statistics than the results stated in Table \ref{Table:partnstats}, and hence our results here for $\mathfrak{sl}_{r+1}(\mathbb{C})$-representations can likely be generalized and strengthened on several fronts.
\begin{enumerate}[leftmargin=*]
	\item In the Lie algebraic setting, it may be interesting to investigate families other than $\mathfrak{sl}_{r+1}(\mathbb{C})$, explore distributions under other measures, or incorporate infinite-dimensional Verma modules in a sensible way.
	\item Another direction is to extend our proof of equivalence of ensembles in the case of $\mathfrak{sl}_{r+1}(\mathbb{C})$-representations, which relied on using Weyl differencing and induction on $r$.  It would have wide applications to provide a criterion for equivalence of ensembles if the $a_r(\k)$ are replaced by generic multivariable polynomials.  This would apply to representations of all complex semi-simple Lie algebras.  We remark that, in the single variable polynomial case, the number of parts was studied by Goh and Hitczenko \cite{GH}.
	\item Finally, we consider only a selection of random variables on $\mathfrak{sl}_{r+1}(\mathbb{C})$-representations that seemed most natural.  We also chose to focus on convergence in distribution; however, it may be possible to adapt our techniques to prove refined local limit theorems such as convergence in terms of the L\'{e}vy--Prokhorov distance \cite{Fristedt}.  One could also undertake a more careful study of ``intermediate sized'' irreducible representations (those to which equivalence of ensembles does not apply) as in \cite{Pittel}.  For example, deviations from the limit shape could be studied as in \cite{DemboVershikZeitouni,Pittel}.  Random generation using the Boltzmann model could also be explored \cite{AD,BFR,DFLS}.
\end{enumerate}
 
\appendix \section{Irreducible Representation Dimensions of $\mathfrak{sl}_{3}(\mathbb{C})$ on a Circle}\label{A:LowerBounds}

In this appendix, we prove the statement of Proposition \ref{prop:sinsquare_sum_lower_bound_general_r_case} for the case $\mathfrak{sl}_3 (\IC)$, which forms the base case of the inductive argument used there.
We start with a number of technical lemmas.

\begin{lem}\label{lem:a2_lower_bound_successive_terms}
Let $N\in\N$, $0 < \ve < \frac{1}{5}$, and
$\frac{\ve}{N} \leq \th \leq \frac{1}{2} - \frac{\ve}{N}$.
Defining $b_k := \{ (2k+1) \th \}$ for $k \in \IZ$ and given
$\ell$ successive terms $b_{k_0+1}, b_{k_0+2}, \ldots, b_{k_0+\ell}$
in $[0,\frac{\ve}{2}]\cup[1-\frac{\ve}{2},1)$, the following hold:
\begin{enumerate}[label=\rm(\arabic*),leftmargin=*]
\item We have $\ell \leq \frac{N}{2} + 1$.
\item If $b_{k_0+\ell+1} \in (\frac{\ve}{2}, 1-\frac{\ve}{2})$, then we have
$b_{k_0+\ell+1}, b_{k_0+\ell+2}, \ldots, b_{k_0+2\ell} \in \lp \frac{\ve}{2}, 1-\frac{\ve}{2} \rp$.
\end{enumerate}
\end{lem}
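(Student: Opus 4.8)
The setup is a one-dimensional Weyl-differencing lemma: $b_k = \{(2k+1)\theta\}$ is an arithmetic progression modulo $1$ with common difference $2\theta$, and we are tracking runs of consecutive terms that land in the "bad set" $B := [0,\frac{\ve}{2}] \cup [1-\frac{\ve}{2},1)$, a neighborhood of $0$ of total length $\ve$. The plan is to work with the differences $b_{k+1} - b_k \equiv 2\theta \pmod 1$ and exploit that $\theta$ is bounded away from $0$ and from $\frac12$ by $\frac{\ve}{N}$.

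\medskip\noindent\textbf{Part (1).} The plan is to argue that a run of $\ell$ consecutive terms all in $B$ forces $\ell \cdot 2\theta$ to be close to an integer, and then use the lower bound $\theta \geq \frac{\ve}{N}$. Concretely, if $b_{k_0+1}, \ldots, b_{k_0+\ell}$ all lie in $B$, I would first observe that consecutive bad terms cannot "jump across" the middle of $[0,1)$: since each $b_k \in B$ has distance at most $\frac{\ve}{2}$ to an integer, we have $b_{k_0+j+1} - b_{k_0+j} \equiv 2\theta \pmod 1$ with the representative in $(-\ve, \ve)$, hence (as $0 < 2\theta \le 1 - \frac{2\ve}{N} < 1$ and $2\theta$ is either in $(0,\ve)$ or in $(1-\ve,1)$) in fact $2\theta \in (0,\ve) \cup (1-\ve,1)$. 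Telescoping, $b_{k_0+\ell} - b_{k_0+1} \equiv 2(\ell-1)\theta \pmod 1$ must also have a representative of absolute value $< \ve$. Writing $2(\ell-1)\theta = m + \delta$ with $m \in \IZ_{\ge 0}$ and $|\delta| < \ve$: if $m = 0$ then $(\ell-1)\theta < \frac{\ve}{2} < \frac12$, and combined with $\theta \ge \frac{\ve}{N}$ this gives $\ell - 1 < \frac{N}{2}$, i.e.\ $\ell \le \frac{N}{2}+1$ (after accounting for the endpoint carefully). If $m \ge 1$, then since $2(\ell-1)\theta \le 2(\ell-1)\cdot \frac12 = \ell-1$ we get $\ell - 1 \ge m - \ve \ge 1 - \ve$, which is automatic; here I instead use $\theta \le \frac12 - \frac{\ve}{N}$, so $2\theta \le 1 - \frac{2\ve}{N}$ and $2(\ell-1)\theta \le (\ell-1)(1 - \frac{2\ve}{N}) = (\ell-1) - \frac{2(\ell-1)\ve}{N}$; requiring this to be within $\ve$ of the integer $m \le \ell-1$ forces $\frac{2(\ell-1)\ve}{N} < \ve + (\ell-1-m) \cdot 0$... — one must split on $m = \ell-1$ versus $m < \ell-1$. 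When $m = \ell-1$ we get $\frac{2(\ell-1)\ve}{N} < \ve$, i.e.\ $\ell - 1 < \frac{N}{2}$ again; when $m \le \ell-2$, the quantity $2(\ell-1)\theta$ lies below $\ell-2+\ve$ only if $\theta$ is correspondingly small, which loops back to the $m=0$-type estimate applied to a sub-run. The cleanest route is: the representatives of $2j\theta$ for $j=1,\dots,\ell-1$ are all in $(-\ve,\ve)$, and since $2\theta$ itself has representative in $(0,\ve)\cup(1-\ve,1)$, replacing $\theta$ by $\frac12 - \theta$ if necessary (which fixes the set $B$ and the progression up to reversal) we may assume $2\theta \in (0,\ve)$, whence $2j\theta \in (0, 2(\ell-1)\theta] \subset (0,\ve)$ for all $j \le \ell-1$ forces $2(\ell-1)\theta < \ve$ directly, giving $\ell-1 < \frac{\ve}{2\theta} \le \frac{\ve}{2\ve/N} = \frac{N}{2}$.

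\medskip\noindent\textbf{Part (2).} This is the real content and where I expect the main obstacle. The claim is a "rigidity"/"shadowing" statement: if a maximal bad run of length $\ell$ is immediately followed by a good term $b_{k_0+\ell+1} \in (\frac{\ve}{2}, 1-\frac{\ve}{2})$, then the next $\ell$ terms $b_{k_0+\ell+1}, \dots, b_{k_0+2\ell}$ are all good. The plan is as follows. After the reduction $2\theta \in (0,\ve)$ as above, the bad run $b_{k_0+1} < b_{k_0+2} < \cdots < b_{k_0+\ell}$ (or the reversed picture near $1$) is an increasing progression with step $2\theta$ staying in $[0,\frac{\ve}{2}]$ — possibly wrapping once from near $1$ up through $0$, but a single wrap within a set of length $\ve < \frac15$ means we really have a run in $[1-\frac{\ve}{2},1)$ followed by a run in $[0,\frac{\ve}{2}]$; handle the two sub-runs. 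The key algebraic fact to extract: since the run has length exactly $\ell$ and step $2\theta$, we have $b_{k_0+\ell} + 2\theta = b_{k_0+\ell+1}$ is good, so $b_{k_0+\ell} + 2\theta \in (\frac{\ve}{2}, 1-\frac{\ve}{2})$; but $b_{k_0+\ell} \le \frac{\ve}{2}$ and $2\theta < \ve$, so in fact $b_{k_0+\ell+1} = b_{k_0+\ell} + 2\theta < \frac{3\ve}{2}$. Now I would add $2\theta$ repeatedly: each of $b_{k_0+\ell+1}, b_{k_0+\ell+2}, \dots$ increases by $2\theta < \ve$ per step, starting from a value $> \frac{\ve}{2}$, and I must show none of the first $\ell$ of them re-enters $B$, i.e.\ none reaches $1 - \frac{\ve}{2}$. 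The total increase over $\ell$ steps is $\ell \cdot 2\theta$; combined with Part (1)'s bound $\ell \le \frac{N}{2}+1$ and $2\theta \le 1 - \frac{2\ve}{N}$... this alone is not enough. The better idea: use that the PREVIOUS run had length exactly $\ell$, which pins down how large $\ell \cdot 2\theta$ can be. Indeed $b_{k_0+1} \in B$ but $b_{k_0}$ (the term before the run) is NOT in $B$ — if $k_0$ indexes a genuine predecessor — or $b_{k_0+1}$ is the start; in the generic case $b_{k_0} \in (\frac{\ve}{2}, 1-\frac{\ve}{2})$ and $b_{k_0} + 2\theta = b_{k_0+1} \le \frac{\ve}{2}$ forces $b_{k_0}$ close to $1$, namely $b_{k_0} > 1 - 2\theta$, i.e.\ $b_{k_0} \in (1 - 2\theta, 1-\frac{\ve}{2}]$ wait that requires $2\theta > \frac{\ve}{2}$; if instead the run begins by wrapping, $b_{k_0} \in (1-\frac{\ve}{2} - 2\theta + \cdots)$. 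In all cases, the run occupies an interval of the circle of length $\le \frac{\ve}{2} + 2\theta < \frac{3\ve}{2}$ worth of $2\theta$-steps, so $(\ell-1)\cdot 2\theta < \frac{3\ve}{2}$, giving $\ell \cdot 2\theta < \frac{3\ve}{2} + 2\theta < \frac{3\ve}{2} + \ve = \frac{5\ve}{2}$. Then over the next $\ell$ steps the values climb from $b_{k_0+\ell+1} \in (\frac{\ve}{2}, \frac{3\ve}{2})$ by a total of $< \frac{5\ve}{2}$, ending below $\frac{3\ve}{2} + \frac{5\ve}{2} = 4\ve < \frac45 < 1 - \frac{\ve}{2}$ (using $\ve < \frac15$), and staying above $\frac{\ve}{2}$ throughout since the sequence is increasing — hence all good. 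The wrapped-run case and the "run near $1$" case (from the reduction, or equivalently $\theta$ near $\frac12$) are handled by the same bookkeeping after the reflection $\theta \mapsto \frac12 - \theta$ and/or $b \mapsto 1-b$.

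\medskip\noindent The main obstacle I anticipate is the careful case analysis around wrapping of the bad run across $0$ and the correct handling of the two boundary cases (the run starting at the very beginning of the indexed range, so there is no predecessor $b_{k_0}$ to constrain $\ell \cdot 2\theta$). In that boundary case one falls back on Part (1)'s crude bound $\ell \le \frac{N}{2}+1$ together with $2\theta \le 1 - \frac{2\ve}{N}$, which must be shown to still yield the required conclusion in (2); I would verify that $\ell \cdot 2\theta \le (\frac{N}{2}+1)(1-\frac{2\ve}{N})$ combined with the explicit smallness of $\ve$ keeps the subsequent run inside $(\frac{\ve}{2}, 1-\frac{\ve}{2})$ — and if it does not, restrict attention in Part (2) to non-boundary runs, which is all that the application in Proposition \ref{prop:sinsquare_sum_lower_bound_general_r_case}'s base case (Lemma \ref{lem:a2_lower_bound_window_ladder3}) actually needs. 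Getting the constants $\frac15$ and $\frac{\ve}{2}$ to line up is the bulk of the remaining work, but it is routine once the structural picture above is in place.
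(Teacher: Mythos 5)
Your structural plan is the same as the paper's: track the arithmetic progression $b_k \bmod 1$ with step $2\theta$, show the bad run cannot wrap, deduce $2\theta(\ell-1)\le\ve$, and then climb forward. The paper makes this clean by passing to centered representatives $r_k := (2k+1)\theta - \lfloor (2k+1)\theta+\tfrac12\rfloor \in [-\tfrac12,\tfrac12)$, noting that for $\theta \le \tfrac14$ one has $n_{k+1}=n_k$ along the run (so the $r_k$ form a genuine AP with increment $2\theta$ and no wrap), obtaining $r_{k_0+\ell}-r_{k_0+1}=2\theta(\ell-1)\le\ve$ directly from both endpoints lying in $[-\tfrac\ve2,\tfrac\ve2]$, and handling $\tfrac14\le\theta\le\tfrac12-\tfrac\ve N$ by a symmetric argument (there the increment of $r_k$ is $2\theta-1<0$). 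This is the content of your ``cleanest route.''

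However, there is a concrete error in your reduction. The substitution $\theta\mapsto\tfrac12-\theta$ sends $b_k=\{(2k+1)\theta\}$ to $\{\tfrac12-b_k\}$, so the bad set $B=[0,\tfrac\ve2]\cup[1-\tfrac\ve2,1)$ is mapped to $[\tfrac12-\tfrac\ve2,\tfrac12+\tfrac\ve2]$, \emph{not} to $B$. So ``replacing $\theta$ by $\tfrac12-\theta$\ldots fixes the set $B$'' is false, and the reduction to $2\theta\in(0,\ve)$ is not justified as written. What is true, and what the paper uses instead, is that the centered increment $r_{k+1}-r_k$ equals $2\theta$ when $\theta<\tfrac14$ and $2\theta-1$ when $\theta>\tfrac14$; one simply treats the two sign cases separately.

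Two further points: (i) Your worry about maximal runs and about the absence of a predecessor $b_{k_0}$ is a red herring. The inequality $2\theta(\ell-1)\le\ve$ needs only that $r_{k_0+1}$ and $r_{k_0+\ell}$ both lie in $[-\tfrac\ve2,\tfrac\ve2]$ and that no wrap occurred in between; it requires neither maximality of the run nor information about $b_{k_0}$. (ii) Your bound $(\ell-1)\cdot 2\theta<\tfrac{3\ve}{2}$ is both loose and not cleanly derived; the sharp and immediate bound is $\le\ve$, which yields $r_{k_0+\ell}+2\theta j\le\tfrac\ve2+2\theta\ell\le\tfrac\ve2+\ve+\ve=\tfrac{5\ve}2<\tfrac12$ for $j\le\ell$, which is exactly where the hypothesis $\ve<\tfrac15$ enters and what keeps the representatives from wrapping. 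Your looser constants happen to still clear $1-\tfrac\ve2$, so the conclusion survives, but the argument should be tightened to make the role of $\ve<\tfrac15$ transparent.
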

\begin{proof}
First we assume $\frac{\ve}{N} \leq \th \leq \frac{1}{4}$ and
for any $k \in \IZ$ define
\begin{equation*}
n_k := \left\lfloor (2k+1)\th + \frac{1}{2} \right\rfloor \in \IZ
\andd
r_k := (2k+1)\th - n_k \in \lb - \frac{1}{2}, \frac{1}{2} \rp
\end{equation*}
so that $r_k \equiv b_k \Pmod{1}$ and $r_{k_0+1}, \ldots, r_{k_0+\ell} \in \lsb - \frac{\ve}{2}, \frac{\ve}{2} \rsb$.
Note that for $j \in \{1,2,\ldots,\ell-1 \}$ we have
\begin{equation*}
|n_{k_0+j+1} - n_{k_0+j}| \leq 2 \th + |r_{k_0+j+1} - r_{k_0+j}|
\leq \frac{1}{2} + \ve .
\end{equation*}
Therefore, $n_{k_0+j} = n_{k_0+1}$ for all $j \in \{1,2,\ldots,\ell \}$ and we find
\begin{equation}\label{eq:a2_lower_bound_successive_terms_bound}
2 \th (\ell - 1) =
r_{k_0+\ell} - r_{k_0+1} \leq \ve .
\end{equation}
Then $\th \geq \frac{\ve}{N}$ implies $\ell \leq \frac{N}{2} + 1$ and proves the first part.
The second claim, on the other hand, is trivial for $\ell = 1$, so assume $\ell \geq 2$.
For $j \in \{1,2,\ldots,\ell \}$ we have
\begin{equation*}
- \frac{\ve}{2} < r_{k_0+\ell} + 2 \th j \leq
\frac{\ve}{2} + 2 \th \ell \leq  \frac{5 \ve}{2}
\end{equation*}
with the last inequality following from \eqref{eq:a2_lower_bound_successive_terms_bound}.
Since $\ve < \frac{1}{5}$, we then find $r_{k_0+\ell} + 2 \th j \in \lp - \frac{\ve}{2}, \frac{1}{2} \rp$ for these values of $j$.
Now note that $r_{k_0+\ell} + 2 \th j \equiv r_{k_0+\ell+j} \pmod{1}$ and that we are assuming $r_{k_0+\ell+1}$ is in $[ -\frac{1}{2}, - \frac{\ve}{2} )
\cup ( \frac{\ve}{2}, \frac{1}{2} )$.
Therefore, $r_{k_0+\ell} + 2 \th = r_{k_0+\ell+1} \in \lsp \frac{\ve}{2}, \frac{1}{2} \rsp$ and consequently
$r_{k_0+\ell} + 2 \th j \in \lsp \frac{\ve}{2}, \frac{1}{2} \rsp$ for all $j \in \{1,2,\ldots,\ell \}$
because $r_{k_0+\ell} + 2 \th j$ is an increasing sequence in $j$, thereby proving the second part. The proof of both parts for the range $\frac{1}{4} \leq \th \leq \frac{1}{2} - \frac{\ve}{N}$ is similar.
\end{proof}

This lemma gives us our first step in our ladder of results towards
Proposition \ref{prop:sinsquare_sum_lower_bound_general_r_case} for $\mathfrak{sl}_3 (\IC)$.

\begin{lem}\label{lem:a2_lower_bound_window}
Let $N\in\N_{\geq 4}$, $0 < \ve \leq \frac{1}{32}$, and $k_0 \in \IZ$ with $3N \leq k_0 \leq 5N$. Then given any $\th \in \IR$ with
$\frac{\ve}{N} \leq \th \leq \frac{1}{2}$ we have
\begin{equation*}
\# \left\{ k \in \IZ : k_0 \leq k < k_0 + N \ \mathrm{and} \
\frac{\ve}{2} < \{(2k+1) \th \} < 1 - \frac{\ve}{2} \right\} \geq \frac{N}{8} .
\end{equation*}
\end{lem}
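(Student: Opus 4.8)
The plan is to split on the size of $\theta$: first the regime $\tfrac{\ve}{N}\le\theta\le\tfrac12-\tfrac{\ve}{N}$, where Lemma~\ref{lem:a2_lower_bound_successive_terms} is directly available, and then the thin regime $\tfrac12-\tfrac{\ve}{N}<\theta\le\tfrac12$, where it is not. Throughout I would write $b_k:=\{(2k+1)\theta\}$, call $k$ \emph{good} if $b_k\in(\tfrac\ve2,1-\tfrac\ve2)$ and \emph{bad} otherwise, and set $W:=\{k_0,k_0+1,\dots,k_0+N-1\}$; since the good and bad integers partition $\IZ$, the task is to exhibit at least $\tfrac N8$ good integers in $W$. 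The near-$\tfrac12$ regime is the quick one: writing $\theta=\tfrac12-\delta$ with $0\le\delta<\tfrac\ve N$, for $k\in W$ one has $2k+1\le 2(k_0+N-1)+1\le 12N-1$ using the hypothesis $k_0\le 5N$, hence $(2k+1)\delta<12\ve\le\tfrac38$; then $(2k+1)\theta=k+\tfrac12-(2k+1)\delta$ has fractional part in $(\tfrac18,\tfrac12]\subset(\tfrac\ve2,1-\tfrac\ve2)$ (here $\ve\le\tfrac1{32}$), so \emph{every} $k\in W$ is good and we are done.

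For the main regime $\tfrac{\ve}{N}\le\theta\le\tfrac12-\tfrac{\ve}{N}$, I would argue as follows. If $W$ has no bad integer we are done, so assume it has some, and list from left to right the maximal runs of consecutive bad integers that meet $W$ as $B_1,\dots,B_p$ ($p\ge1$); set $\beta_i:=\#(B_i\cap W)$ and let $L_i$ be the length of the full maximal bad run of $\IZ$ containing $B_i$. Part~(1) of Lemma~\ref{lem:a2_lower_bound_successive_terms} gives $L_i\le\tfrac N2+1$, so in particular $\beta_p\le L_p\le\tfrac N2+1$. For each $i\le p-1$, maximality of $B_i$ together with the presence of $B_{i+1}\subseteq W$ to its right forces the integer immediately following $B_i$ to be good; applying part~(2) with $\ell=L_i$ then makes the next $L_i$ integers after $B_i$ all good, and since these are trapped between $B_i$ and $B_{i+1}$ they lie in $W$. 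Hence the maximal good run $G_i$ separating $B_i$ from $B_{i+1}$ satisfies $G_i\subseteq W$ and $\#G_i\ge L_i\ge\beta_i$, and the $G_i$ are pairwise disjoint. Therefore $W$ contains at least $\sum_{i=1}^{p-1}\beta_i$ good integers and exactly $\sum_{i=1}^{p}\beta_i$ bad integers, so $(\#\text{bad})-(\#\text{good})\le\beta_p\le\tfrac N2+1$; combining with $(\#\text{bad})+(\#\text{good})=N$ yields $\#\text{good}\ge\tfrac N4-\tfrac12\ge\tfrac N8$, the last step using $N\ge4$.

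The step I expect to be the real obstacle is the run bookkeeping in the main regime: one has to be careful that each good run $G_i$ coming from part~(2) of Lemma~\ref{lem:a2_lower_bound_successive_terms} genuinely sits inside $W$ (this uses that both $B_i$ and $B_{i+1}$ meet $W$, so the block between them cannot poke past either edge), that distinct $G_i$ are disjoint, and that the rightmost bad block $B_p$ is the only one whose length is not matched by an at-least-as-long good block. Everything else — the arithmetic, and the entire $\theta$ near $\tfrac12$ case — is routine; the one point to keep an eye on is that $\tfrac N4-\tfrac12=\tfrac N8$ exactly at $N=4$, so the inequality leaves no room to spare there.
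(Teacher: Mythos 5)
Your proof is correct and follows essentially the same strategy as the paper's: split into the thin regime $\theta$ near $\tfrac12$ (handled by the direct estimate that all of $W$ is good) and the main regime $\tfrac{\ve}{N}\le\theta\le\tfrac12-\tfrac{\ve}{N}$, and in the latter use part~(2) of Lemma~\ref{lem:a2_lower_bound_successive_terms} to pair each bad run (save the rightmost) with a good run of at least equal length, then close with part~(1) applied to the last bad run; the paper encodes this with a partition $N=\sum_j(\ell_j+s_j)$ and the inequalities $s_j\ge\ell_j$ for $j<m$, $\ell_m\le\tfrac N2+1$, arriving at the identical bound $\tfrac N4-\tfrac12\ge\tfrac N8$. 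Your version is a touch more explicit about distinguishing $\beta_i$ from the full run length $L_i$ and about checking $G_i\subseteq W$, but the underlying idea and the arithmetic are the same.
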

\begin{proof}
First assume $\th \in (\frac{1}{2} - \frac{\ve}{N}, \frac{1}{2}]$.
Then for $k_0 \leq k < k_0+N$, we have (using $0 < \ve \leq \frac{1}{32}$)
\begin{equation*}
(2k+1) \th - k = \frac{1}{2} - (2k+1) \lp \frac{1}{2} - \th \rp
\in \lp \frac{\ve}{2}, \frac{1}{2} \rb .
\end{equation*}
So all $N$ terms for $\{(2k+1) \th \}$ are in $\lp \frac{\ve}{2}, 1- \frac{\ve}{2} \rp$ and the lemma statement is satisfied.

Next we assume $\frac{\ve}{N} \leq \th \leq \frac{1}{2} - \frac{\ve}{N}$.
Defining $b_k := \{ (2k+1) \th \}$ for any $k \in \IZ$, consider the sequence $b_{k_0}, b_{k_0+1}, \ldots, b_{k_0+N-1}$. In that sequence let the first $\ell_1$ terms be in $\CI := \lsb 0, \frac{\ve}{2} \rsb \cup \lsb 1 - \frac{\ve}{2}, 1 \rsp$, the next $s_1$ terms be in $\CA:=(\frac{\ve}{2}, 1-\frac{\ve}{2})$, the next $\ell_2$ terms in $\CI$, $\ldots$ etc, and use these to
partition $N$ as
\begin{equation*}
(\ell_1 + s_1) + (\ell_2 + s_2) + \ldots + (\ell_m + s_m) = N .
\end{equation*}
Here $\ell_1, \ldots, \ell_m, s_1, \ldots, s_m$ ($m \in \IN$) are nonnegative integers where only $\ell_1$ and $s_m$ are allowed to be zero.
By Lemma \ref{lem:a2_lower_bound_successive_terms}, we have $s_j \geq \ell_j$ for all $j \in \{1,2,\ldots, m-1\}$ and that is including the possibility $\ell_1 = 0$ (where the inequality $s_1 \geq \ell_1$ is trivial).
Thus we get
\begin{align*}
&\hspace{-1cm}\# \left\{ k \in \IZ : k_0 \leq k < k_0+N \ \mathrm{and} \
\frac{\ve}{2} < \{(2k+1) \th \} < 1 - \frac{\ve}{2} \right\}
=
s_1 + s_2 + \ldots + s_m
\\
&\geq \frac{1}{2} \lp (\ell_1 + s_1) + (\ell_2 + s_2) + \ldots + (\ell_{m-1} + s_{m-1}) + s_m \rp
=
\frac{N-\ell_m}{2},
\end{align*}
which then implies the lemma statement for $N \geq 4$
because $\ell_m \leq \frac{N}{2} + 1$ by Lemma \ref{lem:a2_lower_bound_successive_terms}.
\end{proof}

As an immediate corollary we obtain the following result.

\begin{lem}\label{lem:a2_lower_bound_window_ladder1}
Let $0 < \ve \leq \frac{1}{32}$. For any $N\in\IN_{\geq 4}$ and $\th \in \IR$ with
$\frac{\ve}{N} \leq \th \leq \frac{1}{2}$ we have
\begin{equation*}
\# \left\{ k,j \in \IZ : \frac{3N}{2} \leq k < \frac{5N}{2}, \  2N \leq j \leq 3N,
\  \mathrm{and} \
\frac{\ve}{2} < \{(2k+2j+1) \th \} < 1 - \frac{\ve}{2} \right\} \geq \frac{N^2}{8} .
\end{equation*}
\end{lem}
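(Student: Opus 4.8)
The plan is to deduce Lemma~\ref{lem:a2_lower_bound_window_ladder1} from Lemma~\ref{lem:a2_lower_bound_window} by freezing the variable $j$ and recognizing $2k+2j+1 = 2(k+j)+1$ as a translate of the one-variable argument, so that each admissible value of $j$ contributes a copy of the window estimate.

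First I would fix an integer $j$ with $2N \le j \le 3N$. For the integers $k$ satisfying $\tfrac{3N}{2} \le k < \tfrac{5N}{2}$, the substitution $k \mapsto k' := k+j$ is a bijection onto a block of consecutive integers. Writing $k_0 := \lceil \tfrac{3N}{2} \rceil + j$, one checks (separating the cases $N$ even and $N$ odd) that this block is exactly $\{\,k' \in \IZ : k_0 \le k' < k_0 + N\,\}$, using that the half-open interval $[\tfrac{3N}{2}, \tfrac{5N}{2})$ contains precisely $N$ integers. Moreover, from $\tfrac{3N}{2} \le \lceil \tfrac{3N}{2}\rceil \le \tfrac{3N}{2}+1$ and $2N \le j \le 3N$ together with $N \ge 4$, one gets $3N \le \tfrac{7N}{2} \le k_0 \le \tfrac{9N}{2}+1 \le 5N$, so $k_0$ meets the hypothesis of Lemma~\ref{lem:a2_lower_bound_window}; the parameters $\ve$ and $\th$ and their constraints are unchanged.

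Applying Lemma~\ref{lem:a2_lower_bound_window} for this $k_0$ and translating back then shows that, for each fixed $j$ in the admissible range, there are at least $\tfrac{N}{8}$ integers $k$ with $\tfrac{3N}{2} \le k < \tfrac{5N}{2}$ and $\tfrac{\ve}{2} < \{(2k+2j+1)\th\} < 1-\tfrac{\ve}{2}$. Since the pairs $(k,j)$ arising from distinct values of $j$ are distinct, summing over the $N+1$ admissible values of $j$ yields a total count of at least $(N+1)\tfrac{N}{8} \ge \tfrac{N^2}{8}$, which is the assertion.

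There is essentially no obstacle here; the statement is genuinely an immediate corollary. The only points requiring a line of care are the elementary parity check that $[\tfrac{3N}{2}, \tfrac{5N}{2})$ contains exactly $N$ integers (so that the shifted index set takes the precise form demanded by Lemma~\ref{lem:a2_lower_bound_window}), and the short inequality chain verifying $3N \le k_0 \le 5N$ uniformly in $j$.
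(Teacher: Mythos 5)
Your proof is correct and is precisely the argument implied by the paper's phrase ``as an immediate corollary'': for each fixed $j\in\{2N,\ldots,3N\}$, the substitution $k' = k+j$ turns the two-variable count into an instance of Lemma~\ref{lem:a2_lower_bound_window} with $k_0 = \lceil 3N/2\rceil + j \in [3N,5N]$, and summing over the $N+1$ values of $j$ gives the claimed bound. The parity check on the integer count in $[\tfrac{3N}{2},\tfrac{5N}{2})$ and the inequality chain for $k_0$ are exactly the details one must verify, and you verify them correctly.
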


We next lift this result to a quadratic polynomial via a Weyl differencing type argument.
\begin{lem}\label{lem:a2_lower_bound_window_ladder2}
Let $0 < \ve \leq \frac{1}{32}$. For any $N\in\IN_{\geq 4}$ and $\th \in \IR$ with
$\frac{\ve}{N^2} \leq \th \leq \frac{1}{2}$ we have
\begin{equation*}
\# \left\{ k,j \in \IZ : N \leq k \leq 3N, \ 2N \leq j \leq 3N, \  \mathrm{and} \
\frac{\ve}{4} < \{\th k (2j+k) \} < 1 - \frac{\ve}{4} \right\} \geq \frac{N^2}{16} .
\end{equation*}
\end{lem}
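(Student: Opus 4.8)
The plan is to use a Weyl-differencing (finite-difference) step to reduce the quadratic-in-$k$ phase $\th k(2j+k)$ to the linear phase handled in Lemma~\ref{lem:a2_lower_bound_window_ladder1}, exactly in the spirit of the inductive reduction in Proposition~\ref{prop:sinsquare_sum_lower_bound_general_r_case}. Writing $f(k,j) := \th k(2j+k)$, observe that the first difference in $k$ with step $1$ is
\begin{equation*}
f(k+1,j) - f(k,j) = \th\bigl( (k+1)(2j+k+1) - k(2j+k) \bigr) = \th(2k + 2j + 1),
\end{equation*}
which is precisely the linear phase appearing in Lemma~\ref{lem:a2_lower_bound_window_ladder1}. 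So if $\{\th(2k+2j+1)\}$ is bounded away from $0$ and $1$ (say lies in $(\tfrac{\ve}{2},1-\tfrac{\ve}{2})$), then $\{f(k,j)\}$ and $\{f(k+1,j)\}$ cannot both be within $\tfrac{\ve}{4}$ of an integer: if they were, their difference would be within $\tfrac{\ve}{2}$ of an integer, a contradiction. Hence for each such pair $(k,j)$, at least one of the two consecutive values $\{\th k(2j+k)\}$, $\{\th(k+1)(2j+k+1)\}$ lies in $(\tfrac{\ve}{4}, 1-\tfrac{\ve}{4})$.

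The key steps, in order, are as follows. First, restrict the range of $\th$: the hypothesis is $\tfrac{\ve}{N^2} \le \th \le \tfrac12$, and I will split into $\tfrac{\ve}{N} \le \th \le \tfrac12$ and $\tfrac{\ve}{N^2} \le \th \le \tfrac{2}{N}$ (the two overlap, covering the whole interval for $N \ge 4$), handling each by an appropriate finite-difference step. In the first range, apply Lemma~\ref{lem:a2_lower_bound_window_ladder1} directly: for each fixed $j$ with $2N \le j \le 3N$, the good set of $k \in [\tfrac{3N}{2}, \tfrac{5N}{2})$ has size $\ge \tfrac{N}{8}$ when summed over $j$ in the appropriate window, and each good $(k,j)$ for the linear phase produces a good $k' \in \{k, k+1\} \subseteq [N,3N]$ for the quadratic phase. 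One must check the index bookkeeping: shifting $k \mapsto k$ or $k+1$ keeps $k'$ inside $[N,3N]$, and the map $(k,j) \mapsto (k',j)$ is at worst $2$-to-$1$, so one loses at most a factor of $2$; combined with $\tfrac{N^2}{8}$ from Lemma~\ref{lem:a2_lower_bound_window_ladder1} this gives $\ge \tfrac{N^2}{16}$. In the second range $\tfrac{\ve}{N^2} \le \th \le \tfrac{2}{N}$, one instead uses the difference operator with step $N$ in the $k$-variable: $f(k+N,j) - f(k,j) = \th N(2k + 2j + N)$, so that $N\th$ plays the role of the multiplier in Lemma~\ref{lem:a2_lower_bound_window_ladder1} and $\tfrac{\ve}{N^2} \le N\th$ matches the required lower bound $\tfrac{\ve}{N}$ on the effective angle. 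Again each good configuration produces a good $k' \in \{k, k+N\}$ within $[N,3N]$ after choosing the base window for $k$ correctly, and the counting constant works out.

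The main obstacle I anticipate is purely combinatorial bookkeeping rather than anything deep: one must arrange the windows for $k$ and $j$ in Lemma~\ref{lem:a2_lower_bound_window_ladder1} and in the target statement so that the shift by $1$ (or by $N$) never pushes an index out of the allowed box $[N,3N] \times [2N,3N]$, and one must verify that the loss from the "at least one of two consecutive values" argument is only a bounded multiplicative factor (here $2$), compatible with going from $\tfrac{N^2}{8}$ down to $\tfrac{N^2}{16}$. A secondary point to be careful about is the change in the $\ve$-threshold: a good value in $(\tfrac{\ve}{2}, 1-\tfrac{\ve}{2})$ for the difference phase only forces a good value in the \emph{wider} interval $(\tfrac{\ve}{4}, 1-\tfrac{\ve}{4})$ for $f$ itself, which is exactly the halving of the margin recorded in the statement; one should confirm the elementary inequality that $|a|, |b| \le \tfrac{\ve}{4} \pmod 1$ implies $|a - b| \le \tfrac{\ve}{2} \pmod 1$, and that $\ve \le \tfrac{1}{32}$ keeps all these small quantities in the regime where Lemma~\ref{lem:a2_lower_bound_successive_terms} and Lemma~\ref{lem:a2_lower_bound_window} apply without edge-case trouble.
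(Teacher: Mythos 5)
Your first-range argument ($\tfrac{\ve}{N} \le \th \le \tfrac12$) is essentially the paper's: apply Lemma~\ref{lem:a2_lower_bound_window_ladder1}, note that each linear success at $(k,j)$ forces a quadratic success at $(k,j)$ or $(k+1,j)$, and divide by $2$ for the at-most-two-to-one collapse. That part is fine.

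Your second-range argument has a genuine gap. First, the window $\tfrac{\ve}{N^2}\le\th\le\tfrac{2}{N}$ is too large: with effective angle $\th'=N\th$ you need $\th'\le\tfrac12$ to invoke Lemma~\ref{lem:a2_lower_bound_window_ladder1}, which forces $\th\le\tfrac{1}{2N}$, not $\tfrac{2}{N}$ (this is easily repaired, since $\tfrac{\ve}{N}\le\tfrac{1}{2N}$ still gives overlap with the first range). The more serious problem is parity. Your one-sided step gives
\begin{equation*}
f(k+N,j)-f(k,j)=N\th\,(2k+2j+N),
\end{equation*}
and when $N$ is even the factor $2k+2j+N$ is even, so it is \emph{not} of the form $2m+1$; Lemma~\ref{lem:a2_lower_bound_window_ladder1} (and Lemma~\ref{lem:a2_lower_bound_window} behind it) is stated only for odd multipliers $2m+1$, and its proof, via the sequence $b_k=\{(2k+1)\th\}$ in Lemma~\ref{lem:a2_lower_bound_successive_terms}, genuinely uses that structure. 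The paper sidesteps this with a \emph{centered} difference: with $N_h:=\lfloor N/2\rfloor$ one computes $b_{k+N_h+1,j}-b_{k-N_h,j}=(2N_h+1)(2k+2j+1)$, since the two endpoints sum to $(k+N_h+1)+(k-N_h)=2k+1$. This keeps the inner factor odd for every $N$, the step length $2N_h+1$ is also odd (and $\ge N$), and the effective angle $\th'=(2N_h+1)\th$ then lies in $[\tfrac{\ve}{N},\tfrac12]$ exactly on the range $\tfrac{\ve}{N(2N_h+1)}\le\th\le\tfrac{1}{2(2N_h+1)}$. The centered difference also makes the index bookkeeping automatic: $k\in[\tfrac{3N}{2},\tfrac{5N}{2})$ forces both $k-N_h\ge N$ and $k+N_h+1\le 3N$, so Lemma~\ref{lem:a2_lower_bound_window_ladder1} applies verbatim, whereas your one-sided shift by $N$ would push $k+N$ out of $[N,3N]$ for $k>2N$, so you would have to fall back on the more flexible Lemma~\ref{lem:a2_lower_bound_window} with a translated window rather than Lemma~\ref{lem:a2_lower_bound_window_ladder1} directly. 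To close the gap, replace your step-$N$ one-sided difference with the centered difference $b_{k+N_h+1,j}-b_{k-N_h,j}$ and adjust the $\th$-range to $\tfrac{\ve}{N(2N_h+1)}\le\th\le\tfrac{1}{2(2N_h+1)}$.
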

\begin{proof}
We prove the lemma for $\frac\ve N\leq\th\leq\frac12$ and
$\frac{\ve}{N(2N_h+1)}\leq\th\leq\frac{1}{2(2N_h+1)}$
with $N_h:=\lfloor\tfrac N2\rfloor$,
which cover the range of $\th$ assumed in the lemma statement.
So first assume that $\frac{\ve}{N} \leq \th \leq \frac{1}{2}$ and
define $b_{k,j} := k (2j+k)$.
If $\frac{\ve}{2} < \{ \th (b_{k+1,j}-b_{k,j}) \} < 1 - \frac{\ve}{2}$,
then we can not have both of $\{\th b_{k,j} \}$ and $\{\th b_{k+1,j} \}$
in the range $\lb 0, \frac{\ve}{4} \rb \cup \lb 1-\frac{\ve}{4}, 1 \rp$.
Thus we can place a lower bound
\begin{align*}
&\# \left\{ k,j \in \IZ : N \leq k \leq 3N, \ 2N \leq j \leq 3N, \  \mathrm{and} \
\frac{\ve}{4} < \{\th b_{k,j} \} < 1 - \frac{\ve}{4} \right\}
\\
&\quad\geq \frac{1}{2}
\# \left\{ k,j \in \IZ : N \leq k \leq 3N-1, \ 2N \leq j \leq 3N, \  \mathrm{and} \
\frac{\ve}{2} < \{\th  (b_{k+1,j}-b_{k,j}) \} < 1 - \frac{\ve}{2} \right\} .
\end{align*}
Since $b_{k+1,j}-b_{k,j} = 2k+2j+1$, we then use Lemma \ref{lem:a2_lower_bound_window_ladder1} to lower bound the second line by $\tfrac{N^2}{16}$.
The proof for the other range $\frac{\ve}{N(2N_h+1)} \leq \th \leq \frac{1}{2(2N_h+1)}$ follows the same argument while noting that $b_{k+N_h+1,j}-b_{k-N_h,j} = (2N_h+1) (2k+2j+1)$.
\end{proof}

Finally, we are ready to give such a lower bound for $a_2 (k,j) = \frac{1}{2} kj(k+j)$.
\begin{lem}\label{lem:a2_lower_bound_window_ladder3}
Let $0 < \ve \leq \frac{1}{32}$. For any $N\in\IN_{\geq 4}$ and $\th \in \IR$ with
$\frac{\ve}{N^3} \leq \th \leq \frac{1}{2}$ we have
\begin{equation*}
\# \left\{ k,j \in \IZ : N \leq k \leq 3N, \  N \leq j \leq 4N, \  \mathrm{and} \
\frac{\ve}{8} < \{\th a_2 (k,j) \} < 1 - \frac{\ve}{8} \right\} \geq \frac{N^2}{32} .
\end{equation*}
\end{lem}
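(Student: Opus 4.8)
The plan is to carry out one more Weyl differencing step, passing from the quadratic form $b(k,j):=k(2j+k)$ treated in Lemma~\ref{lem:a2_lower_bound_window_ladder2} to the cubic form $a_2(k,j)=\tfrac12 kj(k+j)$. The identity that makes this work is the elementary computation
\begin{equation*}
a_2(k,j+h)-a_2(k,j)=\tfrac12\,kh\,(k+2j+h),
\end{equation*}
which for an even step $h=2m$ becomes
\begin{equation*}
a_2(k,j+2m)-a_2(k,j)=m\,k\bigl(2(j+m)+k\bigr)=m\,b(k,j+m).
\end{equation*}
So, after rescaling $\th$ to $m\th$, any statement about $\{(m\th)\,b(k,j+m)\}$ becomes a statement about the finite difference of $\{\th a_2(k,\cdot)\}$ at step $2m$.

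As in the earlier lemmas I would split the hypothesis $\tfrac{\ve}{N^3}\le\th\le\tfrac12$ into two overlapping subranges: the range $\tfrac{\ve}{N^2}\le\th\le\tfrac12$, handled with $m=1$, and the range $\tfrac{\ve}{N^3}\le\th\le\tfrac1{2N}$, handled with $m=N$, so that $N\th$ lands in the interval $[\tfrac{\ve}{N^2},\tfrac12]$ that Lemma~\ref{lem:a2_lower_bound_window_ladder2} requires. These two subranges cover the whole interval because $\tfrac{\ve}{N^2}\le\tfrac1{2N}$ when $N\ge4$ and $0<\ve\le\tfrac1{32}$.

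Fix a subrange with its $m\in\{1,N\}$ and set $\th':=m\th$. The core observation is the contrapositive of a triangle inequality: if both $\{\th a_2(k,j)\}$ and $\{\th a_2(k,j+2m)\}$ lie in $[0,\tfrac{\ve}{8}]\cup[1-\tfrac{\ve}{8},1)$, then $\{\th'\,b(k,j+m)\}=\{\th(a_2(k,j+2m)-a_2(k,j))\}$ lies in $[0,\tfrac{\ve}{4}]\cup[1-\tfrac{\ve}{4},1)$. Equivalently, whenever $\tfrac{\ve}{4}<\{\th'\,b(k,j+m)\}<1-\tfrac{\ve}{4}$, at least one of the indices $(k,j)$, $(k,j+2m)$ satisfies $\tfrac{\ve}{8}<\{\th a_2\}<1-\tfrac{\ve}{8}$; and since a fixed index $(k,j_0)$ can arise only from the two pairs $(k,j_0)$ and $(k,j_0-2m)$, the number of these ``good'' indices is at least half the number of such pairs $(k,j)$. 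Applying Lemma~\ref{lem:a2_lower_bound_window_ladder2} with parameter $\th'$ on the window $N\le k\le 3N$, $2N\le j+m\le 3N$ produces at least $\tfrac{N^2}{16}$ such pairs, and their indices $(k,j)$ and $(k,j+2m)$ satisfy $N\le k\le 3N$ and — one checks for $m=1$ and $m=N$ separately — also $N\le j\le 4N$. Hence in each subrange we obtain at least $\tfrac12\cdot\tfrac{N^2}{16}=\tfrac{N^2}{32}$ admissible indices, which is the claim.

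I do not anticipate a genuine obstacle: the argument is a direct analogue of the proof of Lemma~\ref{lem:a2_lower_bound_window_ladder2}. The only delicate points are bookkeeping — checking that the two $\th$-subranges truly exhaust $[\tfrac{\ve}{N^3},\tfrac12]$, noticing that the shift must be taken to be $m=1$ respectively $m=N$ (and not, say, $m\approx N/2$) precisely so that both shifted second indices $j$ and $j+2m$ remain inside $[N,4N]$, and confirming that the ``at most two preimages'' count is not disturbed at the edges of the window.
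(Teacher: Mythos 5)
Your proof is correct and takes essentially the same route as the paper: the paper also lifts Lemma~\ref{lem:a2_lower_bound_window_ladder2} to the cubic $a_2$ by Weyl differencing in the $j$-variable, splitting into the subranges $\tfrac{\ve}{N^2}\le\th\le\tfrac12$ and $\tfrac{\ve}{N^3}\le\th\le\tfrac1{2N}$ exactly as you do. The only cosmetic difference is that the paper writes the difference symmetrically, $a_2(k,j+h)-a_2(k,j-h)=hk(2j+k)$ with $h\in\{1,N\}$, whereas you use the forward form $a_2(k,j+2m)-a_2(k,j)=m\,b(k,j+m)$; these agree under $j\mapsto j+m$, and your bookkeeping of the index window $[N,4N]$ and the two-preimage count is accurate.
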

\begin{proof}
The proof follows the same roadmap as Lemma \ref{lem:a2_lower_bound_window_ladder2}.
We prove the statement in the ranges $\frac{\ve}{N^2} \leq \th \leq \frac{1}{2}$ and
$\frac{\ve}{N^3} \leq \th \leq \frac{1}{2N}$, while noting that
$a_2 (k,j+1)-a_2 (k,j-1) = k(2j+k)$ and $a_2 (k,j+N)-a_2 (k,j-N) = N k(2j+k)$, respectively,
to lift the result of Lemma \ref{lem:a2_lower_bound_window_ladder2}.
\end{proof}

\end{document}